\documentclass[10pt]{article}
\usepackage{bbm}
\usepackage{amsfonts}
\usepackage{amsmath}
\allowdisplaybreaks
\usepackage{amssymb}
\usepackage{fancyhdr}
\usepackage{setspace}
\usepackage{latexsym}
\usepackage{mathrsfs}
\usepackage{amsthm}

\usepackage{cite,enumitem,graphicx}
\usepackage[backref,colorlinks,linkcolor=red,anchorcolor=green,citecolor=blue]{hyperref}
\usepackage{cleveref}
%\usepackage[colorlinks=true,urlcolor=blue,
%citecolor=red,linkcolor=blue,linktocpage,pdfpagelabels,
%bookmarksnumbered,bookmarksopen]{hyperref}
\usepackage[english]{babel}

\bibliographystyle{elsarticle-num}
\topmargin=-1cm \oddsidemargin=0cm \textwidth=16cm \textheight=22.6cm

\numberwithin{equation}{section}
\newtheorem{theorem}{Theorem}[section]
\newtheorem{proposition}[theorem]{Proposition}
\newtheorem{lemma}[theorem]{Lemma}
\newtheorem{corollary}[theorem]{Corollary}
\newtheorem{definition}[theorem]{Definition}
\newtheorem{remark}[theorem]{Remark}
\newtheorem{example}[theorem]{Example}

\newcommand{\bb}{\beta}

\newcommand{\bt}{\begin{theorem}}
\newcommand{\et}{\end{theorem}}
\newcommand{\bl}{\begin{lemma}}
\newcommand{\el}{\end{lemma}}
\newcommand{\bd}{\begin{definition}}
\newcommand{\ed}{\end{definition}}
\newcommand{\bc}{\begin{corollary}}
\newcommand{\ec}{\end{corollary}}
\newcommand{\bp}{\begin{proof}}
\newcommand{\ep}{\end{proof}}
\newcommand{\bx}{\begin{example}}
\newcommand{\ex}{\end{example}}
\newcommand{\bi}{\begin{exercise}}
\newcommand{\ei}{\end{exercise}}
\newcommand{\bo}{\begin{proposition}}
\newcommand{\eo}{\end{proposition}}
\newcommand{\br}{\begin{remark}}
\newcommand{\er}{\end{remark}}
\newcommand{\be}{\begin{equation}}
\newcommand{\ee}{\end{equation}}
\newcommand{\ba}{\begin{align}}
\newcommand{\ea}{\end{align}}
\newcommand{\bn}{\begin{enumerate}}
\newcommand{\en}{\end{enumerate}}
\newcommand{\bg}{\begin{align*}}
\newcommand{\bcs}{\begin{cases}}
\newcommand{\ecs}{\end{cases}}

\newcommand{\NN}{{\mathbb N}}

\newcommand{\bean}{\begin{eqnarray*}}
\newcommand{\eean}{\end{eqnarray*}}

%%%%%%%%%%%%%%%%%%%%%%%%%%%%%%%%%%%%%%%%%%%%%%%%%%%%%%%%%%%%%%%%%%%%%%%%%%%%%%%%%%

\makeatletter % @ is now a normal "letter" for TeX
\renewcommand\theequation{\thesection.\arabic{equation}}
\@addtoreset{equation}{section}
\makeatother%@isrestoredasa"non-letter"characterforTeX
\numberwithin{equation}{section}

\begin{document}

\begin{center}
\textbf{Existence and Multiplicity of Solutions for Fractional $p$-Laplacian Equation Involving Critical Concave-convex Nonlinearities}\\
\end{center}

\begin{center}
Weimin Zhang\\
%$^{1}$ School of Mathematics and Computer Science, Shanxi Normal University,\\ Taiyuan, 030031, P.R. China\\
School of Mathematical Sciences,  Key Laboratory of Mathematics and Engineering Applications (Ministry of Education) \& Shanghai Key Laboratory of PMMP,  East China Normal University, Shanghai 200241, China
\end{center}
\begin{center}
\renewcommand{\theequation}{\arabic{section}.\arabic{equation}}
\numberwithin{equation}{section}
\footnote[0]{\hspace*{-7.4mm}
AMS Subject Classification: 35A15, 35J60, 58E05.\\
{E-mail addresses: zhangweimin2021@gmail.com (W. Zhang).}}
\end{center}
%\address[W.~M.~Zhang]{\newline\indent School of Mathematical Sciences
%\newline\indent
%East China Normal University
%\newline\indent
%Shanghai 200241, P.R. China}
%\email{\href{mailto:52205500026@stu.ecnu.edu.cn}{52205500026@stu.ecnu.edu.cn}}
%\address[H.~R.~Sun]{\newline\indent School of Mathematics and Statistics
%\newline\indent
%Lanzhou University,
%\newline\indent Lanzhou 730000, P.R. China}
%\email{\href{mailto:hrsun@lzu.edu.cn}{hrsun@lzu.edu.cn}}
%\address[J.~Zhang]{\newline\indent College of Mathematics and Statistics
%\newline\indent
%Chongqing Jiaotong University
%\newline\indent
%Chongqing 400074, P.R. China}
%\email{\href{mailto:zhangjianjun09@tsinghua.org.cn}{zhangjianjun09@tsinghua.org.cn}}
%\address[Z.~F.~Jin]{\newline\indent School of Mathematics and Computer Science
%\newline\indent
%Shanxi Normal University,
%\newline\indent Taiyuan, Shanxi 030031, China}
%\email{\href{mailto:jinzhf15@lzu.edu.cn}{jinzhf15@lzu.edu.cn}}
%\thanks{(1) Corresponding author: \texttt{hrsun@lzu.edu.cn}}
%\thanks{(2) H.~R.~Sun was partly supported by the NSFC (Grants No. 11671181) and NSF of Gansu Province of China (Grants No. 21JR7RA535)}
%\thanks{(3) J.~J.~Zhang was supported by NSFC(No.11871123)}
%\subjclass[2010]{35A15, 35J60, 58E05}
\begin{abstract}
We investigate the following fractional $p$-Laplacian equation
\[
\begin{cases}
\begin{aligned}
(-\Delta)_p^s u&=\lambda |u|^{q-2}u+|u|^{p_s^*-2}u  &&\text{in}~\Omega,\\
u&=0~~~~~~~~~~~~~~~~~~~~~&&\text{in}~ \mathbb{R}^n\setminus\Omega,
\end{aligned}
\end{cases}
\]
where $s\in (0,1)$, $p>q>1$, $n>sp$, $\lambda>0$, $p_s^*=\frac{np}{n-sp}$ and $\Omega$ is a bounded domain (with $C^{1, 1}$ boundary). Firstly, we get a dichotomy result for the existence of positive solution with respect to $\lambda$. For $p\ge 2$, $p-1<q<p$, $n>\frac{sp(q+1)}{q+1-p}$, we provide two positive solutions for small $\lambda$. Finally, without sign constraint, for $\lambda$ sufficiently small, we show the existence of infinitely many solutions.
\end{abstract}
\textbf{Keywords:} Critical Sobolev exponent, Fractional $p$-Laplacian, Convex-concave, Multiplicity of solutions.

\section{Introduction}\label{s1}
In this paper, we are interested in the following fractional $p$-Laplacian equation
\[
\leqno (P_{\lambda})~~~~~~~~~~~~~~~~~~
\begin{cases}
\begin{aligned}
(-\Delta)_p^s u&=\lambda |u|^{q-2}u+|u|^{p_s^*-2}u  &&\text{in}~\Omega,\\
u&=0~~~~~~~~~~~~~~~~~~~~~&&\text{in}~ \mathbb{R}^n\setminus\Omega,
\end{aligned}
\end{cases}
\]
where $s\in (0,1)$, $p>q>1$, $n>sp$, $\lambda>0$, $p_s^*=\frac{np}{n-sp}$ is called the critical Sobolev exponent, and $\Omega \subset {\mathbb R}^n$ is a bounded domain (with $C^{1, 1}$ boundary). $(-\Delta)_p^s$ denotes the fractional $p$-Laplacian operator, and when $u$ is sufficiently smooth, it can be represented pointwisely by
\begin{equation*}
(-\Delta)_p^su(x)=2 ~\underset{\varepsilon \rightarrow 0^+}{\lim}\int_{\mathbb{R}^n\setminus B_{\varepsilon}(x)}{\frac{|u(x)-u(y)|^{p-2}(u(x)-u(y))}{|x-y|^{n+sp}}}dy,
\end{equation*}
up to a normalization constant depending on $n$ and $s$, which is consistent with the usual linear fractional Laplacian $(-\Delta)^s$ when $p=2$, see  \cite{Di12}. Let $f: \Omega\times\mathbb{R}\to \mathbb{R}$ be a Carath\'{e}odory mapping, consider the general fractional $p$-Laplacian equation
\begin{equation}\label{2209241450}
\begin{cases}
\begin{aligned}
(-\Delta)_p^s u&=f (x,u)~~\,~\text{in}~\Omega,\\
u&=0~~~~~~~~~~~\text{in}~ \mathbb{R}^n\setminus\Omega.
\end{aligned}
\end{cases}
\end{equation}
To give a weak formulation of \eqref{2209241450}, we denote the Gagliardo seminorm by
\[
[u]_{s,p}:=\left( \int_{\mathbb{R}^{2n}}{\frac{|u(x)-u(y)|^p}{|x-y|^{n+sp}}}dxdy\right)^{1/p}.
\]
Let
\[
W^{s,p}(\mathbb{R}^n):=\{u\in L^p(\mathbb{R}^n): [u]_{s,p}<\infty\}
\]
be endowed with the norm
\[
\|u\|_{W^{s,p}}:=(|u|_p^p+[u]_{s,p}^p)^{1/p},
\]
where $|\cdot|_{p}$ denotes the usual norm of $L^{p}(\mathbb{R}^n)$. Denote the subspace
\[
W_0^{s,p}(\Omega):=\left\{u\in W^{s,p}(\mathbb{R}^n):  u=0 \; \text{a.e.~in}~\mathbb{R}^n \setminus \Omega\right\},
\]
equivalently renormed with $\|u\|=[u]_{s,p}$ (see \cite[Theorem 7.1]{Di12}), it is well known that $W_0^{s,p}(\Omega)$ is a uniformly convex Banach space. Furthermore, the embedding $W_0^{s,p}(\Omega)\hookrightarrow L^r(\Omega)$ is continuous for $r\in [1,p_s^*]$ and compact for $r\in [1,p_s^*)$, see \cite[Theorems 6.5, 7.1]{Di12}. $(-\Delta)_p^s$ can be variationally regarded as an operator from $W_0^{s,p}(\Omega)$ into its dual space $W_0^{s, p}(\Omega)^*$ as follows,
\[
\langle (-\Delta)_p^s u, v \rangle=\int_{\mathbb{R}^{2n}}{\frac{J_u(x,y)(v(x)-v(y))}{|x-y|^{n+sp}}}dxdy, \quad \forall\; v\in W_0^{s,p}(\Omega),
\]
where $J_u(x,y)=|u(x)-u(y)|^{p-2}(u(x)-u(y))$.\par
 We call $u\in W_0^{s,p}(\Omega)$ a weak solution (respectively weak subsolution, weak supersolution) of \eqref{2209241450} if $f(x, u)\in W_0^{s, p}(\Omega)^*$ and
\begin{equation*}
\langle (-\Delta)_p^s u, v \rangle=(\text{respectively}~\le,\, \ge) \int_{\Omega} f(x, u)vdx, \quad \forall\; v \in W_0^{s,p}(\Omega), v\ge 0.
\end{equation*}
For simplicity, from now on, we omit the term {\sl weak} in the rest of our paper. If $f$ satisfies the growth condition: 
\begin{equation}\label{2209241459}
|f(x,t)|\le C_0 (1+|t|^{r-1}) \quad \mbox{for }\; C_0>0, 1<r\le p_s^*
\end{equation}
and a.e. $x\in\Omega$, $t\in\mathbb{R}$. Thus solutions of problem \eqref{2209241450} coincide with  critical points of the $C^1$ functional
\begin{equation}\label{2307061350}
E(u)=\frac1{p} \|u\|^p-\int_{\Omega}\int_0^{u}f(x, t) dt dx,\quad u\in W_0^{1,p}(\Omega).
\end{equation}
%We prove in Proposition \ref{2209252041} the $C^\alpha$ regularity of weak solutions to \eqref{2209241450} if $f$ satisfies \eqref{2209241459}. Furthermore, in virtue of \cite[Theorem 1.1]{IMS2020_1}, when $p\ge 2$, every weak solution $u$ also satisfy
%\[
%\left\|\frac{u}{\text{d}_{\Omega}^s}\right\|_{C^{\alpha}(\overline{\Omega})}<\infty
%\]
%where $\text{d}_{\Omega}(x)=\text{dist}(x, \partial\Omega)$. The existence of solutions to \eqref{2209241450}
In order to find critical points of $E$, the eigenvalues of $(-\Delta)_p^s$ based on $\mathbb{Z}_2$-cohomological index introduced in \cite{IS2014} are usually used to carry out some linking constructions, see for instance \cite{Perera2015, Iannizzotto016, Mosconi}.
When $f$ satisfies the subcritical growth condition (i.e.~\eqref{2209241459} with $1<r<p_s^*$), many results about existence and multiplicity of solutions of \eqref{2209241450} have been established, see for example \cite{GS2015-1,GS2015, Iannizzotto016}.
%For example, Goyal and Sreenadh \cite{GS2015-1,GS2015} considered $f(x, t)=\lambda h(x)|t|^{q-2}t+b(x)|t|^{r-2}t$ where $p\ge 2$, $n> ps$, $p<r<p_s^*$, $b(x)$ is a sign-changing continuous function. When $1<q<p$ and $h(x)$ is also a sign-changing continuous function, \cite{GS2015-1} proved that there exists $\lambda_0>0$ such that for $\lambda\in (0, \lambda_0)$,  \eqref{2209241450} has two nontrivial non-negative solutions. When $q=p$, $h\equiv 1$ and $\int_{\Omega}b(x)|\varphi_1|^rdx\neq 0$ where $\varphi_1$ is the eigenfunction related to the first eigenvalue $\lambda_1$ of $(-\Delta)_p^s$, \cite{GS2015} obtained the existence of nontrivial non-negative solutions to \eqref{2209241450} for $\lambda<\lambda_1$, and provided $\int_{\Omega}b(x)|\varphi_1|^rdx<0$, there exists two nontrivial non-negative solutions when $\lambda\in (\lambda_1,\lambda_1+\delta)$ for some $\delta>0$.
%For general existence results in subcritical case, we refer to the works in \cite{Iannizzotto016}.\par

In the special case $p=2$, Ros-Oton and Serra \cite{RS2014-1} gave a Pohozaev identity for solutions of \eqref{2209241450}, and when $p\neq 2$, a similar identity for $u\in W_0^{s,p}(\Omega)$ solution to \eqref{2209241450} was conjectured in \cite[Section 7]{Iannizzotto016}. It results that if $f(x, t)=|t|^{p_s^*-2}t$, there may not exist non trivial solutions of \eqref{2209241450} when $\Omega$ is star-shaped. This motivates people to consider the perturbation problem
\begin{equation}\label{2301011916}
\begin{cases}
\begin{aligned}
(-\Delta)_p^s u&=\lambda g(x,u)+|u|^{p_s^*-2}u &&\text{in}~\Omega,\\
u&=0~~~~~~~~~~~~~~~~~~~~&&\text{in}~ \mathbb{R}^n\setminus\Omega,
\end{aligned}
\end{cases}
\end{equation}
where $g: \Omega\times \mathbb{R}\to \mathbb{R}$ satisfies the subcritical growth  \eqref{2209241459} with $1<r<p_s^*$. This type of problems bring new difficulties due to the fact that the associated energy of \eqref{2301011916} cannot satisfy Palais-Smale condition globally because the embedding $W_0^{s,p}(\Omega)\subset L^{p_s^*}(\Omega)$ is not compact. However, the Palais-Smale condition can hold true in suitable thresholds related to the best Sobolev constant
\begin{equation*}%\label{eq1.5}
S_{s,p}:=\underset{u\in D^{s,p}(\mathbb{R}^n) \backslash \{ 0\}}{\text{inf}}\frac{[u]_{s,p}^p}{|u|_{p_s^*}^{p}},
\end{equation*}
where $
D^{s,p}(\mathbb{R}^n):=\big\{u\in L^{p^*_s}(\mathbb{R}^n): [u]_{s,p}< \infty\big\}.$

\begin{itemize}
\item Mahwin and Molica Bisci \cite{MM2017} proved that there exists an interval $\mathcal{V}\subset (0,\infty)$ such that for every $\lambda\in \mathcal{V}$, \eqref{2301011916} admits at least one solution, which is a local minimizer of the corresponding energy. The main strategy in \cite{MM2017} is to check the sequentially weakly lower semicontinuity of the functional
\[
H(u)= \frac1p \|u\|^p-\frac{1}{p^*}|u|_{p_s^*}^{p_s^*}
%, \quad u\in W_0^{s,p}(\Omega),
\]
restricted in a sufficiently small ball of $W_0^{s,p}(\Omega)$. As a special case, they showed in \cite[Theorem 1.2]{MM2017} that 
\begin{equation}\label{2305102124}
\begin{cases}
\begin{aligned}
(-\Delta)_p^s u&=\lambda (|u|^{q-2}u+|u|^{r-2}u)+|u|^{p_s^*-2}u && \text{in}~\Omega,\\
u&=0~~~~~~~~~~~~~~~~~~~~~~~~~~~~~&& \text{in}~ \mathbb{R}^n\setminus\Omega,
\end{aligned}
\end{cases}
\end{equation}
has a positive solution for $\lambda\in \mathcal{V}$, provided $2\le q<p<r<p_s^*$.
%(by means of the maximum principle in \cite[Theorem A.1]{BF2014}).
\item In \cite{Mosconi}, Mosconi {\it et al.}~proved that when $g(x, u)=|u|^{p-2}u$, \eqref{2301011916} has a non trivial solution in the following cases:
\begin{itemize}
\item[$\mathrm{(i)}$] $n = sp^2$ and $\lambda\in (0,\lambda_{1})$;
\item[$\mathrm{(ii)}$] $n>sp^2$ and $\lambda \not\in \{\lambda_{k}\}$;
\item[$\mathrm{(iii)}$] $\frac{n^2}{n+s}>sp^2$;
\item[$\mathrm{(iv)}$] $\frac{n^3+s^3p^3}{n(n+3)}>sp^2$ and $\partial \Omega \in C^{1,1}$,
\end{itemize}
where $\lambda_{k}$ is the $k$-th eigenvalue of $(-\Delta)_p^s$ given in \cite{IS2014}.
\item Bhakta and Mukherjee \cite{BM2017} obtained that when $p\ge 2$, there exist $\lambda_0>0$, $n_0\in\mathbb{N}$ and $q_0\in (1, p)$ such that for all $\lambda\in (0, \lambda_0)$, $n>n_0$ and $q\in (q_0, p)$, $(P_\lambda)$ has at least one sign changing solution.
 \end{itemize}
For the convex-concave nonlinearities, there are also some literature concerning problem $(P_\lambda)$ with the classical $p$-Laplacian operator, i.e. $s=1$,
\begin{equation}\label{22081016}
\begin{cases}
\begin{aligned}
-\Delta_p u&=\lambda |u|^{q-2}u+ |u|^{p^*-2}u \quad\text{in}~\Omega,\\
%u&>0~~~~~~~~~~~~~~~~~~~~~~~\,~~~\text{in}~ \Omega,\\
u&=0~~~~~~~~~~~~~~~~~~~~~~~\,~~~\text{on}~ \partial\Omega,
\end{aligned}
\end{cases}
\end{equation}
where $p^*=\frac{np}{n-p}$ and  $\Delta_p u= \text{div} (|\nabla u|^{p-2}\nabla u)$.   For example:
\begin{itemize}
\item Ambrosetti, Brezis and Cerami  \cite{Ambrosetti94} did seminal works for $1<q<p=2$, and they proved that there exists $\Lambda>0$ such that \par
\begin{itemize}
\item[(i)] problem \eqref{22081016} has at least two positive solutions if $0<\lambda<\Lambda$;\par
\item[(ii)] problem \eqref{22081016} has at least one positive solution if $\lambda=\Lambda$;\par
\item[(iii)] problem \eqref{22081016} has no positive solution if $\lambda>\Lambda$.
\end{itemize}
\item Garc\'{\i}a Azorero, Manfredi and Peral Alonso \cite{GMP} (see also \cite{GP}) generalized the above results for general $p > 1$, provided either $\frac{2n}{n+2}<p<3$, $1<q<p$ or $p\ge 3$, $p>q>\frac{p^*-2}{p-1}$.
\end{itemize}
For the linear fractional Laplacian case, that is $p=2$, $s\in (0,1)$, $n>2s$, $1<q<2$, {\color{red} the above (i)-(iii) for $(P_\lambda)$} were obtained by Barrios, Colorado, Servadei and Soria \cite{BCSS2015}.

\medskip
Motivated by the works mentioned above, we are concerned here with the existence and multiplicity problem for $(P_\lambda)$. To state our results, we introduce some definitions. For all $x\in\Omega$, let
\[
\text{d}_{\Omega}(x):=\text{dist}(x, \partial\Omega),
\]
and consider the weighted space
\[
\mathcal{C}_{s}^0(\overline{\Omega}):=\big\{u\in C^0(\overline{\Omega}): \frac{u}{\text{d}_{\Omega}^s}~\text{admits~a~continuous~extension~to}~\overline{\Omega}\big\}
\]
endowed with the norm $\|u\|_{\mathcal{C}_{s}^0(\overline{\Omega})}=\|\frac{u}{\text{d}_{\Omega}^s}\|_{\infty}$. Our first result is a dichotomy claim which extends the results in \cite{BCSS2015} with $(-\Delta)^s$ and \cite{GMP} with $-\Delta_p$.
\begin{theorem}\label{thm1.1}
Let $s\in (0,1)$, $p\in(1,\infty)$, $q\in(1,p)$, $n>sp$, let $\Omega$ be a bounded domain with $C^{1,1}$ boundary. Then there exists $0<\Lambda<\infty$ such that
\begin{itemize}
\item[(i)] $(P_\lambda)$ has no positive solutions for $\lambda>\Lambda$;
\item[(ii)] $(P_\lambda)$ has a minimal positive solution $u_\lambda$ for any $0<\lambda<\Lambda$; moreover, this family of minimal solutions is increasing with respect to $\lambda$, and $\|u_\lambda\|_{\mathcal{C}_s^0(\overline{\Omega})}\to 0$ as $\lambda\to 0^+$;
\item[(iii)] $(P_\lambda)$ has at least one positive solution $u_\Lambda$ for $\lambda=\Lambda$, given by the pointwise limit of $u_\lambda$ as $\lambda\to\Lambda^-$.
\end{itemize}
\end{theorem}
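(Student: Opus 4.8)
The plan is to follow the classical Ambrosetti–Brezis–Cerami strategy, adapted to the fractional $p$-Laplacian. First I would define
\[
\Lambda:=\sup\{\lambda>0: (P_\lambda)\text{ has a positive solution}\},
\]
and show $0<\Lambda<\infty$. For finiteness, one uses the first eigenvalue $\lambda_1$ of $(-\Delta)_p^s$ on $W_0^{s,p}(\Omega)$ with eigenfunction $e_1>0$: testing $(P_\lambda)$ against $e_1$ (or against $u$ itself after comparison) and using that $\lambda t^{q-1}+t^{p_s^*-1}>\lambda_1 t^{p-1}$ for all $t>0$ once $\lambda$ is large (because $q<p<p_s^*$ forces the left side to dominate $t^{p-1}$ uniformly in $t$ when $\lambda$ is big), yields a contradiction; this gives (i) and $\Lambda<\infty$. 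For $\Lambda>0$, I would produce a positive solution for small $\lambda$ by the sub–supersolution method: $\underline u=\varepsilon\varphi$ with $\varphi$ a fixed positive function (e.g.\ the torsion function or $\lambda_1$-eigenfunction) is a subsolution for $\varepsilon$ small, and $\overline u=$ a large constant multiple of a fixed supersolution of the purely critical-type problem works on the bounded domain; monotone iteration of the resolvent of $(-\Delta)_p^s$ (which is order-preserving, by the weak comparison principle for the fractional $p$-Laplacian, see \cite{Iannizzotto016}) produces a solution between them.

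Next, for (ii), the key point is that the solution set is "downward closed": if $(P_{\lambda_0})$ has a positive solution $u_0$, then for every $\lambda\in(0,\lambda_0)$ the function $u_0$ is a supersolution of $(P_\lambda)$ (since $\lambda|u_0|^{q-2}u_0<\lambda_0|u_0|^{q-2}u_0$), while $\varepsilon\varphi$ is again a subsolution with $\varepsilon$ small enough that $\varepsilon\varphi\le u_0$; monotone iteration from below starting at $\varepsilon\varphi$ then yields the \emph{minimal} positive solution $u_\lambda$, and minimality plus the iteration scheme forces $\lambda\mapsto u_\lambda$ to be nondecreasing. To get the $\mathcal C_s^0(\overline\Omega)$-convergence as $\lambda\to0^+$, I would first obtain an a priori $L^\infty$ bound on $u_\lambda$ uniform for $\lambda$ in compact subsets of $(0,\Lambda)$ — via a Moser/De Giorgi iteration for $(-\Delta)_p^s$ with the right-hand side controlled using $q<p_s^*$ — then invoke the global Hölder regularity up to the boundary in the weighted space (the fractional-$p$ analogue of the Ros-Oton–Serra estimate, available in the literature for $C^{1,1}$ domains, e.g.\ \cite{Iannizzotto016}), and finally note that as $\lambda\to0^+$ the right-hand side $\lambda u_\lambda^{q-1}+u_\lambda^{p_s^*-1}\to0$ in $L^\infty$ (using the $L^\infty$ bound), so $u_\lambda\to0$ in $\mathcal C_s^0(\overline\Omega)$ by the regularity estimate applied to the equation with vanishing source.

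Finally, for (iii), I would take an increasing sequence $\lambda_k\uparrow\Lambda$ and the corresponding minimal solutions $u_{\lambda_k}$, which are monotone increasing in $k$. I need a uniform energy/norm bound: testing $(P_{\lambda_k})$ with $u_{\lambda_k}$ gives $\|u_{\lambda_k}\|^p=\lambda_k|u_{\lambda_k}|_q^q+|u_{\lambda_k}|_{p_s^*}^{p_s^*}$, and comparing this with the energy $E_{\lambda_k}(u_{\lambda_k})$ together with the fact that $u_{\lambda_k}$ is a \emph{stable/minimal} solution (so the second-variation inequality $\langle E''_{\lambda_k}(u_{\lambda_k})\varphi,\varphi\rangle\ge0$ holds, which since $q<p$ forces a control of $\|u_{\lambda_k}\|^p$ from below by a multiple of $|u_{\lambda_k}|_{p_s^*}^{p_s^*}$) yields a uniform bound on $\|u_{\lambda_k}\|$. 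Then $u_{\lambda_k}\rightharpoonup u_\Lambda$ in $W_0^{s,p}(\Omega)$ and, by monotone convergence, also pointwise a.e.; passing to the limit in the weak formulation (the nonlocal operator converges because of the monotone a.e.\ convergence and dominated convergence against test functions, using the uniform bound) shows $u_\Lambda$ solves $(P_\Lambda)$, and $u_\Lambda>0$ because $u_\Lambda\ge u_{\lambda_1}>0$.

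The main obstacle I anticipate is the uniform a priori bound needed in (iii): unlike the semilinear case $p=2$, there is no linear test-function trick, and one must carefully exploit the variational stability (nonnegativity of the second variation) of the \emph{minimal} solutions of a quasilinear nonlocal problem — the fractional-$p$ second-variation functional is delicate — to prevent the $L^{p_s^*}$-mass from escaping as $\lambda\to\Lambda^-$; closely related is securing the uniform-in-$\lambda$ $L^\infty$ bound in (ii), where the non-Hilbertian structure and nonlocality of $(-\Delta)_p^s$ make the iteration and the boundary-weighted regularity noticeably more technical than in \cite{BCSS2015}.
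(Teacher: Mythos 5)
Your proposal follows the classical Ambrosetti--Brezis--Cerami outline, which is also the paper's starting point, but there are two genuine gaps that the paper goes to considerable length to circumvent, and your sketch does not address them.

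\textbf{Minimality of $u_\lambda$.} You propose to obtain the minimal solution by monotone iteration starting from $\varepsilon\varphi$ with $\varepsilon$ small enough that $\varepsilon\varphi\le u_0$ for a \emph{given} positive solution $u_0$. This yields a solution below $u_0$, but not obviously below \emph{every} positive solution: the admissible $\varepsilon$ depends on the solution against which you compare, and a priori the iteration limit could depend on $\varepsilon$. The paper's key structural device is Lemma \ref{Unique_Le}: the purely sublinear problem $(Q_\lambda)$ has a \emph{unique} positive solution $v_\lambda$, and one then shows (via sub/supersolution between $\varepsilon\varphi_1$ and an arbitrary positive solution $w$ of $(P_\lambda)$, plus uniqueness) that $v_\lambda\le w$ for every positive solution $w$. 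Iterating from the canonical $v_\lambda$ then produces a solution dominated by every positive solution, which is exactly minimality. Without this uniqueness lemma your iteration does not deliver the minimal solution. (The uniqueness proof in the paper, based on the $\beta_0:=\sup\{\ell: v\ge\ell w\}$ trick together with the $\mathcal{C}_s^0$ two-sided bound from Proposition \ref{Comp_Le}, is itself nontrivial in the fractional $p$-Laplacian setting, since Picone-type arguments are unavailable.)

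\textbf{Uniform bound for part (iii).} You assert that the minimal solution is "stable/minimal" so that "the second-variation inequality $\langle E''_{\lambda_k}(u_{\lambda_k})\varphi,\varphi\rangle\ge0$ holds." The paper explicitly flags that this step, which is the heart of \cite{Ambrosetti94}, \emph{does not transfer}: there is no stability theory for the fractional $p$-Laplacian, and one cannot conclude stability of the iteration-produced minimal solution. The workaround in the paper is to introduce a truncated functional $\widehat I_\lambda$ whose global minimizer $\widehat u_\lambda$ lies strictly between minimal solutions for neighbouring parameters $\lambda''<\lambda<\lambda'$; this $\widehat u_\lambda$ is shown to be a local $\mathcal{C}_s^0$-minimizer of $\widetilde I_\lambda$, hence satisfies $g''(1)\ge0$ for $g(t)=\widetilde I_\lambda(t\widehat u_\lambda)$, i.e.\ the one-dimensional stability inequality \eqref{2209110032}. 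Combined with the Nehari identity this yields $\|\widehat u_\lambda\|^p\le C\lambda\|\widehat u_\lambda\|^q$, hence a uniform $W_0^{s,p}$ bound. The bound is then transferred to the minimal solution $u_\lambda$ via the pointwise inequality $u_\lambda\le\widehat u_\lambda$ and the Nehari identity for both. Your sketch collapses this into a one-line claim that fails in this setting.

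Two smaller issues: \textbf{(a)} For $\Lambda<\infty$, "testing against $e_1$" is a linear manoeuvre that does not work when $p\neq2$; the paper instead picks a non-eigenvalue $\widetilde\lambda>\lambda_1$ (using that $\lambda_1$ is isolated) and derives a contradiction from producing, via sub/supersolution, a positive solution of the eigenvalue problem at $\widetilde\lambda$. \textbf{(b)} Your route to $\|u_\lambda\|_{\mathcal C_s^0}\to0$ is circular: to conclude the right-hand side $\lambda u_\lambda^{q-1}+u_\lambda^{p_s^*-1}\to0$ in $L^\infty$ you need $u_\lambda\to0$ in $L^\infty$, which is what you are trying to prove. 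The paper's argument is much simpler: it exhibits, for $M$ small, a supersolution $Me$ (with $e$ the torsion function) of $(P_\lambda)$ for some $\lambda$ small; minimality gives $u_\lambda\le Me$, and since $e\in\mathcal{C}_s^0(\overline\Omega)$ by Proposition \ref{Comp_Le}, $\|u_\lambda\|_{\mathcal C_s^0}\le M\|e\|_{\mathcal C_s^0}\to0$.
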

The authors in \cite{Ambrosetti94} used supersolution and subsolution method to find a minimal solution $v_\lambda$ for $\lambda\in (0, \Lambda)$, which is a stable solution (i.e., the second variation of the energy functional at $v_\lambda$ is nonnegative). The stability yields that $\{v_\lambda\}_{0<\lambda<\Lambda}$ is bounded in $H_0^1(\Omega)$, so the weak limit of $v_\lambda$ is a weak solution for \eqref{22081016} with $\lambda=\Lambda$.  

In contrast to \cite{Ambrosetti94}, we cannot apply Picone's identity due to the presence of the nonlocal term, and there is no related stability theory for fractional $p$-Laplacian so far. Due to the nonlinearity of $(-\Delta)_p^s$, we cannot derive the strong comparison principle directly, that is
$$(-\Delta)_p^su_1\le  (-\Delta)_p^su_2\;\; \text{and}\;\; u_1\neq u_2\Rightarrow u_1<u_2.$$
In \cite{Jarohs2018}, the author derived a version of strong comparison principle, but with rather restrictive assumptions. Our key observation is that all positive solutions to \eqref{2209241450} can be controlled by $\text{d}^s_{\Omega}(x)$, see Proposition \ref{Comp_Le}. Our strategy is to use the comparison principle to get the uniqueness of solution to
\[\leqno (Q_\lambda)~~~~~~~~~~~~~~~~~~~~~~
\begin{cases}
\begin{aligned}
(-\Delta)_p^s u&=\lambda u^{q-1} ~\quad &&\text{in}~\Omega,\\
u&>0 &&\text{in}~ \Omega,\\
u&=0 && \text{in}~ \mathbb{R}^n\setminus\Omega.
\end{aligned}
\end{cases}
\]
We will use the iteration method to show that the unique solution to $(Q_\lambda)$ is less than any positive solution to $(P_\lambda)$, we prove then the existence of a minimal positive solution $u_\lambda$ of $(P_\lambda)$ for all $\lambda\in (0, \Lambda)$, which is increasing with respect to $\lambda$. For any $\lambda\in (0,\Lambda)$, let $0<\lambda''<\lambda<\lambda'<\Lambda$ and
\begin{equation}\label{2307061300}
\Sigma=\big\{u\in W_0^{1,p}(\Omega)\cap \mathcal{C}_{s}^0(\overline{\Omega}): u_{\lambda''}<u<u_{\lambda'}\big\}.
\end{equation}
 Moreover, we use the variational method to find a local minimum solution ${\widehat u}_{\lambda}\in \Sigma$ with respect to the topology of $\mathcal{C}^{0}_s(\overline{\Omega})$. In Lemma \ref{221229}, we show the boundedness of $\{{\widehat u}_{\lambda}\}_{0<\lambda<\Lambda}$ in $W_0^{s, p}(\Omega)$, which yields the boundedness of the minimal solution $\{{u}_\lambda\}_{0<\lambda<\Lambda}$ in $W_0^{s, p}(\Omega)$.
\begin{remark}
Theorem \ref{thm1.1} remains true for \eqref{2301011916} if $g(x, u)=|u|^{q-2}u+|u|^{r-2}u$ with $1<q<p$, $r\in (1, p_s^*)$. Hence it works in the frame of \cite[Theorem 1.2]{MM2017}.
In \cite{GMP}, it was shown that \eqref{22081016} has an extremal solution $u_\Lambda$ in the distributional sense, but the regularity of $u_\Lambda$ was not mentioned. According to the proof of Theorem \ref{thm1.1}, we can assert that $u_\Lambda$ in \cite[Lemma 6.3]{GMP} belongs to $W_0^{1,p}(\Omega)$.
\end{remark}
Now we will consider the existence of a second positive solution to $(P_\lambda)$.
\begin{theorem}\label{2209252009}
Let $s\in (0,1)$, $p\ge 2$, $p-1<q<p$, $n>\frac{sp(q+1)}{q+1-p}$, and $\Omega$ be a bounded domain with $C^{1,1}$ boundary. There exists $\lambda^*>0$ such that for all $\lambda\in (0, \lambda^*)$, problem $(P_\lambda)$ has at least two positive solutions.
\end{theorem}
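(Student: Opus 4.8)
The plan is to obtain a second positive solution as a mountain pass point lying strictly above the minimal solution $u_\lambda$ produced by Theorem \ref{thm1.1}. Throughout, $\lambda$ is kept small (and will be shrunk a finite number of times), and one works with the energy $E_\lambda(u)=\frac{1}{p}\|u\|^p-\frac{\lambda}{q}\int_\Omega(u_+)^q\,\ud x-\frac{1}{p_s^*}\int_\Omega(u_+)^{p_s^*}\,\ud x$, whose nonzero critical points are, by the strong maximum principle, exactly the positive solutions of $(P_\lambda)$. By Theorem \ref{thm1.1} together with the construction of $\widehat u_\lambda$ in $\Sigma$, the minimal positive solution $u_\lambda$ is a local minimizer of $E_\lambda$ in the $\mathcal C^0_s(\overline\Omega)$-topology; invoking the equivalence of $\mathcal C^0_s$- and $W_0^{s,p}$-local minimizers for the fractional $p$-Laplacian, $u_\lambda$ is also a local minimizer of $E_\lambda$ in $W_0^{s,p}(\Omega)$. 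To force the new solution to dominate $u_\lambda$ (hence to be positive and genuinely different), I pass to the obstacle problem ``$u\ge u_\lambda$'': truncate the reaction below $u_\lambda$ so that all solutions of the truncated problem satisfy $u\ge u_\lambda$ and therefore solve $(P_\lambda)$, and set $\widetilde E_\lambda(v):=E^{\mathrm{tr}}_\lambda(u_\lambda+v)$ for $v\in W_0^{s,p}(\Omega)$. Then $v=0$ is a local minimizer of $\widetilde E_\lambda$ with $\widetilde E_\lambda(0)=E_\lambda(u_\lambda)$, while for any fixed $0\le e\in W_0^{s,p}(\Omega)$ one has $\widetilde E_\lambda(te)\to-\infty$ as $t\to+\infty$ because the critical term dominates. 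Hence $\widetilde E_\lambda$ possesses the mountain pass geometry (should $0$ fail to be a strict local minimum, a refined mountain pass/Ghoussoub--Preiss argument already yields a second critical point at the level $\widetilde E_\lambda(0)$), and the mountain pass theorem produces a Palais--Smale sequence at the minimax level $c_\lambda\ge\widetilde E_\lambda(0)$.

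The crux is the strict energy estimate
\[
c_\lambda<\widetilde E_\lambda(0)+\frac{s}{n}\,S_{s,p}^{n/(sp)},
\]
the threshold below which compactness can be recovered. To establish it, I bound $\max_{t\ge0}\widetilde E_\lambda(tU_\e)$ along the family $U_\e$ of truncated extremals for $S_{s,p}$, using the sharp asymptotics (Brasco--Mosconi--Squassina)
\[
[U_\e]_{s,p}^p=S_{s,p}^{n/(sp)}+O\big(\e^{(n-sp)/(p-1)}\big),\qquad |U_\e|_{p_s^*}^{p_s^*}=S_{s,p}^{n/(sp)}+O\big(\e^{n/(p-1)}\big),
\]
together with a lower bound of the form $\int_{B_{\rho\e}(x_0)}U_\e^{\,q}\ge c\,\e^{\,n-(n-sp)q/p}$ on a shrinking ball around the concentration point $x_0\in\Omega$. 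Expanding $E^{\mathrm{tr}}_\lambda(u_\lambda+tU_\e)$ around the critical point $u_\lambda$ and using, for $p\ge2$, the algebraic inequalities linearizing $|a+b|^p$ (to handle the $(-\Delta)^s_p$-term via $(-\Delta)^s_pu_\lambda=\lambda u_\lambda^{q-1}+u_\lambda^{p_s^*-1}$) together with $(a+b)^r\ge a^r+ra^{r-1}b$ for $r=p_s^*$ and $(a+b)^q\ge a^q+qa^{q-1}b+c_qb^q$ on $\{b\ge a\}$ for the reaction terms, the first-order interaction terms between $u_\lambda$ and $U_\e$ cancel and one is left with a bound of the shape
\[
\max_{t\ge0}\widetilde E_\lambda(tU_\e)\le\widetilde E_\lambda(0)+\frac{s}{n}\,S_{s,p}^{n/(sp)}+O\big(\e^{(n-sp)/(p-1)}\big)-c\,\lambda\,\e^{\,n-(n-sp)q/p}.
\]
Here the hypotheses $q>p-1$ and $n>\frac{sp(q+1)}{q+1-p}$ (equivalently $q>\frac{n(p-1)+sp}{n-sp}$) are exactly what make the exponent $n-(n-sp)q/p$ strictly smaller than $(n-sp)/(p-1)$, so that, with $\lambda$ fixed and $\e=\e(\lambda)$ small, the negative concave contribution dominates the positive truncation error and the estimate follows.

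Granting this, I complete the proof by a Brezis--Lieb/concentration--compactness analysis for the fractional $p$-Laplacian: every Palais--Smale sequence of $\widetilde E_\lambda$ at a level in $\big(\widetilde E_\lambda(0),\,\widetilde E_\lambda(0)+\frac{s}{n}S_{s,p}^{n/(sp)}\big)$ is relatively compact, since any loss of compactness would remove at least $\frac{s}{n}S_{s,p}^{n/(sp)}$ of energy. Thus $c_\lambda$ is attained at a critical point $v_0$ of $\widetilde E_\lambda$ with $\widetilde E_\lambda(v_0)=c_\lambda>\widetilde E_\lambda(0)$, so $v_0\neq0$; by the truncation $u:=u_\lambda+v_0\ge u_\lambda$ is a positive solution of $(P_\lambda)$ distinct from $u_\lambda$. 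Choosing $\lambda^*>0$ small enough that the existence and $\mathcal C^0_s$-smallness of $u_\lambda$, the mountain pass geometry, and the energy estimate hold simultaneously for all $\lambda\in(0,\lambda^*)$ finishes the argument. The step I expect to be the genuine obstacle is the energy estimate itself: for the nonlinear operator $(-\Delta)^s_p$ the clean identities available when $p=2$ are replaced by one-sided algebraic inequalities, the borderline region where $U_\e\sim u_\lambda$ must be treated separately, and pinning down the decay exponent of $\int U_\e^{\,q}$ and the precise admissible range of $(n,q)$ requires the fine asymptotics of the fractional Aubin--Talenti profile near infinity.
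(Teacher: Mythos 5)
Your overall architecture — a mountain pass based at the minimal solution $u_\lambda$, the threshold $c_{s,p}=\widetilde I_\lambda(u_\lambda)+\frac{s}{n}S_{s,p}^{n/(sp)}$ below which Palais--Smale can be recovered, and a strict bound on $\max_t$ along the Aubin--Talenti family to get below this threshold — is the same as the paper's. But the step you yourself flag as ``the genuine obstacle'' is precisely where your sketch breaks down, and it is not merely a computation to be filled in: it calls for a different device.

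You propose to estimate $\widetilde E_\lambda(u_\lambda+tU_\varepsilon)$ by pointwise algebraic inequalities for $|a+b|^p$, $(a+b)^q$, $(a+b)^{p_s^*}$, letting first-order terms cancel through the Euler equation for $u_\lambda$. For $p=2$, or for the local $-\Delta_p$, this works because the single integrals localize. For $(-\Delta)_p^s$ with $p>2$, the Gagliardo seminorm of $u_\lambda+tU_\varepsilon$ produces, after the pointwise expansion, nonlocal cross terms of the form
\begin{equation*}
\int_{\mathbb R^{2n}}\frac{|u_\lambda(x)-u_\lambda(y)|^{\gamma}\,|U_\varepsilon(x)-U_\varepsilon(y)|^{p-2}}{|x-y|^{n+sp}}\,dxdy
\end{equation*}
with fractional exponents $\gamma\in(0,2]$ and $p-2$, and $U_\varepsilon$ is \emph{not} compactly supported; neither factor vanishes where the other is large, and there is no clean way to make the remainder subordinate to the concave gain $\lambda\varepsilon^{\,n-(n-sp)q/p}$ in the overlap region. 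The paper removes this obstruction rather than estimating it: it replaces $u_\lambda$ by the cut-off $\eta_\delta u_\lambda$ (vanishing in $B_{2\theta\delta}$) and uses the compactly truncated bubble $u_{\varepsilon,\delta}$ (supported in $B_{\theta\delta}$), so the two functions have \emph{disjoint supports}. All $L^q$- and $L^{p_s^*}$-integrals then split exactly additively with no cross term at all, and the nonlocal interaction in the norm reduces to integrals over products of disjoint annuli where $|x-y|$ is comparable to $\delta$, which are tractable (Lemmas \ref{2209151022}, \ref{2209151434} and Step~1 of Lemma \ref{2209212151}). The mountain pass path is taken in two stages — first deforming $u_\lambda$ to $\eta_\delta u_\lambda$, at cost $O(\delta^{n-sp})$ by Lemmas \ref{2209151022} and \ref{2210121045}, then adding $t\,u_{\varepsilon,\delta}$ — and the scaling $\varepsilon=\delta^{k+1}$ with $0<k<p-1$ is then chosen so the concave gain beats the cut-off error, giving exactly the dimension restriction $n>\frac{sp(q+1)}{q+1-p}$. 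None of this is replaceable by the one-sided algebraic inequalities alone when $p>2$. Separately, your obstacle-problem truncation ``$u\ge u_\lambda$'' is an avoidable extra layer for this nonlocal nonlinear operator, where the requisite comparison for the truncated reaction is itself not immediate; the paper instead assumes (for contradiction) that $\widetilde I_\lambda$ has only the two critical points $0,u_\lambda$, and rules out the trivial outcome by showing (Lemma \ref{2210121051}) that the mountain pass level is strictly positive for $\lambda$ small.
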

For equation \eqref{22081016} with $p=2$, one of main points in \cite{Ambrosetti94} for finding two positive solutions is a result of Brezis and Nirenberg \cite{BN1993} which connects variational and nonvariational methods. Roughly speaking, a local minimizer in $C^1$-topology is also a local minimizer in $W_0^{1,2}(\Omega)$. For $p>1$, the equivalence between $C^1(\overline{\Omega})$ and $W_0^{1,p}(\Omega)$ local minimizers of the energy functional was proven respectively in \cite{GMP} and \cite{GZ2003}. 

However, in fractional operator cases, the space $C^1(\overline{\Omega})$ seems to be not suitable for this aim, but $\mathcal{C}^{0}_s(\overline{\Omega})$ can serve as a suitable substitute. Iannizzotto, Mosconi and Squassina \cite{IMS2020_1} proved that for $p\ge 2$ and a given $f$ satisfying \eqref{2209241459}, a local minimizer of the energy $E$ (see \eqref{2307061350}) in $\mathcal{C}^{0}_s(\overline{\Omega})\cap W_0^{s, p}(\Omega)$ with respect to $\mathcal{C}^{0}_s(\overline{\Omega})$-topology is also a local minimizer in $W_0^{s, p}(\Omega)$, see also Barrios {\it et al.} \cite[Proposition 2.5]{BCSS2015} for the case $p=2$.

In virtue of the maximum principle \cite[Theorem A.1]{BF2014}, positive solutions to $(P_\lambda)$ coincide with nontrivial critical points of the following functional defined on $W_0^{s,p}(\Omega)$
\begin{equation}
\label{tildeI}
{\widetilde I}_{\lambda}(u)=\frac1{p}\|u\|^p-\frac{\lambda}{q}\int_{\Omega}{(u^+)}^q dx-\frac{1}{p_s^*}\int_{\Omega}{(u^+)}^{p_s^*} dx,
\end{equation}
where $u^+ =\max\{u, 0\}$. As mentioned above, $(P_\lambda)$ has a minimal solution $u_\lambda$, and ${\widetilde I}_\lambda$ has a minimizer ${\widehat u}_{\lambda}$ in $\Sigma$ (see \eqref{2307061300}) with respect to $\mathcal{C}_s^0(\overline{\Omega})$-topology, which is also a local minimizer in $W_0^{s,p}(\Omega)$ if $p\ge 2$ by \cite{IMS2020_1}. We can assume $u_\lambda={\widehat u}_{\lambda}$, otherwise, the theorem naturally holds true. Under the assumption that ${\widetilde I}_\lambda$ has only two critical points 0 and ${u}_\lambda$, we will prove in Proposition \ref{2209210928} that ${\widetilde I}_\lambda$ satisfies the Palais-Smale condition for all level
\begin{equation}\label{2307082228}
c<c_{s, p}:={\widetilde I}_\lambda({u}_\lambda)+\frac{s}{n}S_{s,p}^{\frac{n}{sp}}.
\end{equation}
It remains to construct a mountain pass geometry of ${\widetilde I}_\lambda$ around ${u}_\lambda$, and check that the mountain pass level strictly less than $c_{s, p}$.\par
It has been conjectured in \cite{Brasco2016} that all minimizers for $S_{s, p}$ are of the form $cU(|x-x_0|/{\varepsilon})$, where
\[
U(x)=\frac{1}{\big(1+|x|^{\frac{p}{p-1}}\big)^{(n-sp)/p}}, \quad x\in \mathbb{R}^n.
\]
As far as we are aware, this conjecture remains open. However, the asymptotic estimates for all minimizers were established by \cite{Brasco2016}. To estimate the mountain pass level, we will make use of some truncation functions $u_{\varepsilon, \delta}$ constructed by Mosconi {\it et al.} \cite{Mosconi} (see \eqref{2301082313}) and some useful integral estimates of $u_{\varepsilon, \delta}$, see subsection \ref{2307062042} below.

In contrast to \cite{Mosconi}, our mountain pass geometry is around ${u}_\lambda$, instead of 0, for which the estimates will be more complex, the nonlocal integral also brings new difficulties to our computations. To overcome these obstacles, we will proceed simultaneously the construction of mountain pass geometry and the estimate for mountain pass level.

To be more precise, we consider $\eta_\delta {u}_\lambda$ where $\eta_\delta$ is a cut-off function (see \eqref{2307082048}). Lemma \ref{2210121045} leads to $\eta_\delta {u}_\lambda\to {u}_\lambda$ in $W_0^{s,p}(\Omega)$ when $\delta\to 0$. We choose ${u}_\lambda$ to be the starting point of mountain pass path, with the terminal point
\begin{equation}\label{23010700}
e=\eta_{\delta}{u}_\lambda+t_0u_{\varepsilon, \delta},
\end{equation}
where $t_0$ depending on $\varepsilon$ and $\delta$ is a positive number such that ${\widetilde I}_\lambda(e)<{\widetilde I}_\lambda({u}_\lambda)$. Consider the set of mountain pass paths
\begin{equation}\label{2307142204}
\Gamma_{\varepsilon,\delta}:=\{\gamma\in C\left([0,1], W_0^{s,p}(\Omega)\right): \gamma(0)=u_\lambda,\, \gamma(1)=e\},
\end{equation}
and the mountain pass level
\begin{equation}\label{2209150938}
m_{\varepsilon,\delta}:=\underset{\gamma\in\Gamma_{\varepsilon,\delta}}{\inf}\,\underset{t\in [0,1]}{\max}{\widetilde I}_{\lambda}(\gamma(t)).
\end{equation}

We select a special mountain pass path
\begin{equation}\label{2301091202}
\gamma_{\varepsilon,\delta}(t) =
\begin{cases}
\begin{aligned}
& \eta_{{2t\delta}}u_{\lambda}\quad&&\text{if}~~0\le t\le \frac12,\\
& \eta_{{\delta}}u_{\lambda}+(2t-1)t_0u_{\varepsilon, \delta}&&\text{if}~~\frac12 < t\le 1.\\
\end{aligned}
\end{cases}
\end{equation}
and check that ${\widetilde I}_\lambda(\gamma_{{\varepsilon,\delta}}(t))$ tends to ${\widetilde I}_\lambda(u_{\lambda})<c_{s, p}$ as $\delta\to 0$ for all $0\le t\le \frac12$. To reach $m_{\varepsilon,\delta}<c_{s, p}$, it suffices to prove
\begin{equation}\label{2307070041}
\underset{t\ge 0}{\sup}\, {\widetilde I}_\lambda(\eta_{{\delta}}u_{\lambda}+tu_{\varepsilon, \delta})< c_{s, p}
\end{equation}
for some $\varepsilon,\delta>0$. In Lemma \ref{2209212151}, for  $p\ge 2$, $p-1<q<p$ and $n>\frac{sp(q+1)}{q+1-p}$, by taking $\varepsilon=\delta^{k+1}$ with suitable choice of $k \in (0, p-1)$, we can claim that \eqref{2307070041} holds whenever $\delta$ is sufficiently small.

A key point is that $u_{\varepsilon, \delta}$ and $\eta_\delta {u}_\lambda$ will be chosen to have disjoint support domains, which permits us to handle many integral estimates. The mountain pass technique provides then a second critical point. Finally, we prove that the mountain pass level is positive for $\lambda$ positive but small, which guarantees the non-triviality of this critical point.
\begin{remark}
In Theorem \ref{2209252009}, it is clear that $\lambda^*\le \Lambda$, that given in Theorem \ref{thm1.1}. However, we don't know how to rule out the triviality of the critical point when the mountain pass level is just zero, hence we cannot claim actually $\lambda^*=\Lambda$.
\end{remark}
Finally, without sign constraint, we obtain the existence of infinitely many solutions for $(P_\lambda)$. Denote
\begin{equation}\label{2304132001}
I_\lambda(u)=\frac1{p}\|u\|^p-\frac{\lambda}{q}\int_{\Omega}{|u|}^q dx-\frac{1}{p_s^*}\int_{\Omega}{|u|}^{p_s^*} dx,\quad u\in W_0^{s,p}(\Omega).
\end{equation}
Notice that actually all the non-zero critical points of ${\widetilde I}_{\lambda}$ are the positive critical points of $I_\lambda$.
\begin{theorem}\label{2209252055}
Let $s\in (0,1)$, $p>1$, $q\in (1, p)$, $n>sp$, and $\Omega$ be a bounded domain. There exists $\lambda^{**}>0$ such that for all $\lambda\in(0, \lambda^{**})$, $(P_\lambda)$ has a sequence of solutions $\{u_j\}$ satisfying $I_\lambda(u_j)\to 0^-$.
\end{theorem}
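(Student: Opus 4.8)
The plan is to produce infinitely many critical points of $I_\lambda$ of small negative energy by a symmetric (Krasnoselskii genus) minimax applied to a truncated even functional, following the scheme of García Azorero–Manfredi–Peral Alonso \cite{GMP} and Barrios–Colorado–Servadei–Soria \cite{BCSS2015}. Since $I_\lambda$ in \eqref{2304132001} fails the Palais–Smale condition globally (the Sobolev-critical term), I would first localise near $0$. By the Sobolev inequality $|u|_{p_s^*}^{p_s^*}\le S_{s,p}^{-p_s^*/p}\|u\|^{p_s^*}$ and by Hölder $\int_\Omega|u|^q\,dx\le C\|u\|^q$ with $C=C(n,s,p,q,|\Omega|)$, one has $I_\lambda(u)\ge\|u\|^q\big(g(\|u\|)-C\lambda\big)$ for $\|u\|$ in a bounded range, where $g(t):=\frac1p t^{p-q}-\frac1{p_s^*}S_{s,p}^{-p_s^*/p}t^{p_s^*-q}$ satisfies $g(0)=0$, $g>0$ on $(0,b_0)$, $g<0$ on $(b_0,\infty)$ for a constant $b_0>0$ depending only on $n,s,p,q,|\Omega|$. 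Fix $0<\rho_1<\rho_2<b_0$ and $\psi\in C^1([0,\infty);[0,1])$ with $\psi\equiv1$ on $[0,\rho_1^p]$, $\psi\equiv0$ on $[\rho_2^p,\infty)$, and set
\[
J_\lambda(u)=\tfrac1p\|u\|^p-\tfrac{\lambda}{q}\int_\Omega|u|^q\,dx-\tfrac1{p_s^*}\,\psi\big(\|u\|^p\big)\int_\Omega|u|^{p_s^*}\,dx,\qquad u\in W_0^{s,p}(\Omega),
\]
which is even and $C^1$ (since $\|\cdot\|^p$ is $C^1$). A short analysis of $t\mapsto g(t)-C\lambda$ (whose smallest positive zero $a(\lambda)$ tends to $0^+$ as $\lambda\to0^+$, because $g$ increases from $0$ near $t=0$) and of $J_\lambda$ for $\|u\|\ge\rho_2$ (where $\psi=0$, $q<p$, so $J_\lambda$ is coercive) yields $\lambda^{**}>0$ such that for $0<\lambda<\lambda^{**}$: (i) $J_\lambda$ is bounded below on $W_0^{s,p}(\Omega)$; (ii) $\{J_\lambda<0\}\subset\{\|u\|<a(\lambda)\}$ with $a(\lambda)<\rho_1$, $a(\lambda)^p<S_{s,p}^{n/sp}$, and $J_\lambda\equiv I_\lambda$, $J_\lambda'\equiv I_\lambda'$ on $\{\|u\|<\rho_1\}$.

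Next I would verify $(PS)_c$ for $J_\lambda$ at every level $c<0$. If $J_\lambda(u_n)\to c<0$ and $J_\lambda'(u_n)\to0$, then by (ii) eventually $\|u_n\|<a(\lambda)$, so $\{u_n\}$ is bounded, $J_\lambda$ coincides with $I_\lambda$ near each $u_n$, and $\sup_n\|u_n\|^p<S_{s,p}^{n/sp}$. Passing to a subsequence, $u_n\rightharpoonup u$ in $W_0^{s,p}(\Omega)$ and $u_n\to u$ in $L^q(\Omega)$ and a.e.; the Brezis–Lieb-type splitting of the Gagliardo seminorm gives $\|u_n\|^p=\|u_n-u\|^p+\|u\|^p+o(1)$, so the residual mass $\ell:=\lim_n\|u_n-u\|^p$ obeys $\ell\le\sup_n\|u_n\|^p<S_{s,p}^{n/sp}$. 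The standard concentration–compactness analysis for $(-\Delta)_p^s$ — Brezis–Lieb for $|\cdot|_{p_s^*}$, the $(S_+)$ property of $(-\Delta)_p^s$, and the a.e.\ convergence of the fractional gradients of a $(PS)$ sequence, as in \cite{Mosconi,Brasco2016} — then forces $\ell=0$, i.e.\ $u_n\to u$ in $W_0^{s,p}(\Omega)$: a $(PS)$ sequence at a negative level is confined to a ball too small to carry a concentrating Sobolev bubble. (Behaviour of $J_\lambda$ at non-negative levels is irrelevant.)

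Now the minimax. Let $\gamma(\cdot)$ denote the Krasnoselskii genus, and for $k\in\mathbb N$ put $\Gamma_k:=\{A\subset W_0^{s,p}(\Omega)\setminus\{0\}:A\text{ closed},\,A=-A,\,\gamma(A)\ge k\}$ and $c_k:=\inf_{A\in\Gamma_k}\sup_{u\in A}J_\lambda(u)$. Then $(c_k)$ is well defined by (i), non-decreasing, and $c_k<0$ for every $k$: choosing a $k$-dimensional subspace $V_k\subset W_0^{s,p}(\Omega)$, norm equivalence on $V_k$ gives $\int_\Omega|u|^q\,dx\ge\nu_k\|u\|^q$ there, hence for $u\in V_k$ with $\|u\|=r_k<\rho_1$ small, $J_\lambda(u)\le\frac1p r_k^p-\frac{\lambda\nu_k}{q}r_k^q<0$ (using $q<p$); since $\gamma\big(V_k\cap\{\|u\|=r_k\}\big)=k$, this gives $c_k<0$. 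For the matching lower bound I would use compactness of $W_0^{s,p}(\Omega)\hookrightarrow L^q(\Omega)$ together with separability of $W_0^{s,p}(\Omega)$ to build a decreasing chain of closed subspaces $Z_k$ with $\operatorname{codim}Z_k=k-1$ and $\beta_k:=\sup\{|u|_q:u\in Z_k,\,\|u\|\le1\}\to0$; a genus argument shows $A\cap Z_k\neq\emptyset$ for every $A\in\Gamma_k$, so $c_k\ge\inf_{Z_k}J_\lambda$, and since $J_\lambda<0$ forces $\|u\|<a(\lambda)$ (where the critical term is absorbed into $\|u\|^p$) one gets $\inf_{Z_k}J_\lambda\ge-C\lambda^{p/(p-q)}\beta_k^{pq/(p-q)}\to0^-$. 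Thus $-\infty<c_k<0$ and $c_k\to0^-$. Finally, the symmetric deformation lemma for even $C^1$ functionals bounded below and satisfying $(PS)_c$ at the negative levels involved shows each $c_k$ is a critical value, and that $c_k=\cdots=c_{k+j}=:c$ implies $\gamma\big(\{J_\lambda=c,\,J_\lambda'=0\}\big)\ge j+1$; as $(c_k)$ is non-decreasing, strictly negative and tends to $0$, it takes infinitely many distinct values, so $J_\lambda$ has infinitely many critical points $u_j$ with $J_\lambda(u_j)\to0^-$. Since $J_\lambda(u_j)<0$, $\|u_j\|<a(\lambda)<\rho_1$, so each $u_j$ is a critical point of $I_\lambda$, i.e.\ a solution of $(P_\lambda)$, and $I_\lambda(u_j)=J_\lambda(u_j)\to0^-$.

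The main difficulty, and what ties the argument together, is the simultaneous control of the Palais–Smale condition and of the asymptotics $c_k\to0^-$: both hinge on engineering the truncation $\psi$ so that the negative sublevel set $\{J_\lambda<0\}$ is a ball whose radius $a(\lambda)$ shrinks to $0$ as $\lambda\to0^+$, which is exactly what rules out Sobolev concentration at negative levels and, via the compact embedding $W_0^{s,p}(\Omega)\hookrightarrow L^q(\Omega)$, forces $\inf_{Z_k}J_\lambda\to0^-$. Beyond this bookkeeping, the genuinely ``fractional'' inputs — the Brezis–Lieb splitting of the Gagliardo seminorm, the $(S_+)$ property, and the a.e.\ convergence of fractional gradients along $(PS)$ sequences — are available in the literature, so no further essential obstacle is expected; the remaining steps (existence of $b_0$, of $\lambda^{**}$, the $Z_k$ construction, and the symmetric deformation lemma) are routine.
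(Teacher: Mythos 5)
Your proposal is essentially correct and takes the Garc\'ia Azorero--Peral / dual-Fountain truncation route: replace $I_\lambda$ by a globally bounded-below even $C^1$ functional $J_\lambda$ that coincides with $I_\lambda$ on the small ball where the negative sublevel set lives, verify $(PS)_c$ at negative levels by the sub-threshold ball radius, run a genus minimax to get $c_k<0$, and force $c_k\to 0^-$ via a nested chain $Z_k$ of finite-codimension subspaces with $L^q$-constants $\beta_k\to 0$. The paper reaches the same conclusion by a close cousin of this argument: it does not truncate, but restricts $I_\lambda$ to $B_r(0)$ and takes the minimax $b_j$ over $\mathcal{A}_{j,r}=\{A\subset B_r(0):\mathrm{ind}(A)\ge j\}$, obtaining $(PS)$ in the ball from $r<\tfrac12 S_{s,p}^{n/(sp^2)}$, and proves $b_j\to 0^-$ by a contradiction argument through the auxiliary levels $\widetilde b_j$ rather than by your direct bound $c_k\ge -C\lambda^{p/(p-q)}\beta_k^{pq/(p-q)}$; the quantitative bound you derive is actually a bit sharper and makes the asymptotics explicit. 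The one step you present as routine but which is the real auxiliary content of the paper's Section 5 is the construction of the $Z_k$: in $W_0^{s,p}(\Omega)$ with $p\ne 2$ one cannot take orthogonal complements, so the paper builds a biorthogonal system $(g_j,v_j)$ from a sequence $\{f_j\}$ spanning a dense subspace of the (reflexive, separable) dual, sets $E_j^\perp=\bigcap_{k\le j}\ker g_k$, obtains a bounded complementary projection $P_j$ onto $E_j=\mathrm{span}\{v_1,\dots,v_j\}$ for the genus intersection argument, and proves (Lemma \ref{2306292329}) that bounded sequences in $E_j^\perp$ converge weakly to $0$ --- which, via the compact embedding $W_0^{s,p}(\Omega)\hookrightarrow L^q(\Omega)$, is exactly your $\beta_k\to 0$. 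So your scheme is sound, but you should justify the existence of the $Z_k$ with the stated properties in the Banach setting rather than take it as given; that is precisely the paper's point in the remark preceding Section \ref{2306231256}.
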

Garc\'{\i}a Azorero and Peral Alonso in \cite[Theorem 4.5]{Garcia91} proved that for $1<q<p$ and $n>p$, \eqref{22081016} has infinitely many solutions provided $\lambda$ is small (see also \cite{Ambrosetti94} for $p=2$). They used $\mathbb{Z}_2$-genus and Lusternik-Schnirelman theory (see for instance \cite[Theorem 10.9]{AM2007}). Using a dual Fountain theorem due to Bartsch and Willem \cite{BW1995}, when $p=2$, it was showed in \cite[Theorem 3.22]{Willem96} that for $\lambda > 0$ small enough, there exists a sequence of solutions to \eqref{22081016}, whose energies are negative and tends to 0.

Different from the proof of  \cite[Theorem 3.22]{Willem96}, we use the $\mathbb{Z}_2$-genus to construct a sequence of minimax levels $b_j$ for the functional $I_\lambda$ in a small ball $B_r(0)$, see \eqref{23051421} below. We will show that $b_j$ are negative critical values of $I_\lambda$, and lower bounded by $\widetilde{b}_j$ defined in \eqref{2307072358}. By means of weak convergence argument (see Lemma \ref{2306292329}), we will prove that $\widetilde{b}_j\to 0^-$ for small $r$, so does $b_j$.

\begin{remark}
In Section \ref{2306231256}, we provide a space decomposition method for the reflexible and separable Banach space. We believe that this can extend the Fountain theorem \cite[Section 3]{Willem96} and the dual Fountain theorem \cite{BW1995} to the general Banach framework.
\end{remark}
\medskip
The paper is organized as follows. In Section \ref{2306231253}, we introduce some notations and preliminary results. The proof of Theorems \ref{thm1.1}, \ref{2209252009} and \ref{2209252055} are completed respectively in Sections \ref{2306231254}-\ref{2306231256}.

\section{Notations and Preliminaries}\label{2306231253}
In this paper, $C, C', C_1, C_2,...$ denote always generic positive constants. $|\cdot|_{p}$ means the usual norm of $L^{p}(\mathbb{R}^n)$ or $L^{p}(\Omega)$. $\|\cdot\|$ denotes the norm of $W_0^{s,p}(\Omega)$. We use $\varphi_1$ to denote the eigenfunction corresponding to the first eigenvalue $\lambda_1$ of $(-\Delta)_p^s$ in $W_0^{s, p}(\Omega)$ such that $\varphi_1>0$ in $\Omega$ and $|\varphi_1|_{\infty}=1$.

%\begin{definition}
% If $f: \Omega\times\mathbb{R}\to \mathbb{R}$ is a Carath\'{e}odory mapping satisfying \eqref{2209241459}. By a weak supersolution $u\in W_0^{s,p}(\Omega)$ (respectively subsolution) to \eqref{2209241450} we means
%\[
%\int_{\mathbb{R}^{2n}}\frac{J_u(x,y)(\varphi(x)-\varphi(y))}{|x-y|^{n+sp}}dxdy\ge(\, \le\,)\int_{\Omega}f(x,u)\varphi(x) dx
%\]
%for all $\varphi\in W_0^{s,p}(\Omega)$ with $\varphi\ge 0$.
%\end{definition}
\subsection{Truncations for the minimizers of $S_{s, p}$}\label{2307062042}
From \cite{Brasco2016}, we know that if $s\in (0, 1)$, $p>1$, there exists a minimizer $U\in D^{s, p}(\mathbb{R}^n)$ for $S_{s,p}$. Up to scaling and translation, we can assume that $U(0)=1$, $U$ is radially symmetric, nonnegative, radially decreasing and resolves in ${\mathbb R}^n$
\begin{equation}\label{2209100830}
(-\Delta)_p^s U=U^{p_s^*-1}.
\end{equation}
Without confusion, we denote also $U(x) = U(r)$ with $r=|x|$. For any $\varepsilon>0$, let
\begin{equation}\label{2307142307}
U_{\varepsilon}(x)=\frac{1}{\varepsilon^{(n-sp)/p}}U\left(\frac{x}{\varepsilon}\right)
\end{equation}
which is also a minimizer for $S_{s,p}$, and satisfies \eqref{2209100830}. We recall some results shown in Lemmas 2.2, 2.6 and 2.7 of \cite{Mosconi} respectively.

\begin{lemma}\label{2209051345}
There exist constants $c_1, c_2>0$ and $\theta>1$ such that for all $r\ge 1$,
\begin{equation}\label{2209151902}
\frac{c_1}{r^{(n-sp)/(p-1)}}\le U(r) \le \frac{c_2}{r^{(n-sp)/(p-1)}}
\end{equation}
and
\begin{equation}\label{2307150016}
2U(\theta r) \le U(r).
\end{equation}
\end{lemma}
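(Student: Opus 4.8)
The two–sided bound \eqref{2209151902} is precisely the sharp asymptotic decay of extremals for $S_{s,p}$ obtained in \cite{Brasco2016}; inside the present paper one simply invokes \cite[Lemmas 2.2, 2.6, 2.7]{Mosconi}, but let me indicate how it is proved. Since $U$ is radially decreasing with $U(0)=1$, one has $0<U\le 1$ on $\mathbb{R}^n$ and $U(r)\to 0$ as $r\to\infty$; combining $U\in L^{p_s^*}(\mathbb{R}^n)$ with monotonicity (so that $U(r)^{p_s^*}|B_r(0)|\le |U|_{p_s^*}^{p_s^*}$) already yields the crude decay $U(r)\le Cr^{-(n-sp)/p}$. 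To upgrade the exponent to the critical value $\gamma:=\tfrac{n-sp}{p-1}$ the plan is a barrier argument on an exterior domain $\{|x|>R\}$: one constructs a weak supersolution $\Phi$ of $(-\Delta)_p^s v\ge v^{p_s^*-1}$ with $\Phi(x)$ comparable to $|x|^{-\gamma}$ and $\Phi\ge U$ on $\{|x|\le R\}$, together with a weak subsolution of the analogous type with the same decay, and then applies the weak comparison principle for $(-\Delta)_p^s$ to squeeze $U$ between $c_1|x|^{-\gamma}$ and $c_2|x|^{-\gamma}$ on $\{|x|\ge 1\}$. The natural barriers are mollified power functions $A\min\{1,|x|^{-\gamma}\}$, and the technical core is an explicit estimate of $(-\Delta)_p^s$ applied to such functions in the exterior region, pinning down that the critical decay exponent is exactly $\gamma$.

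Granted \eqref{2209151902}, inequality \eqref{2307150016} is immediate and is the part of the statement I can settle cleanly. Set $\gamma=\tfrac{n-sp}{p-1}>0$ (positive since $n>sp$). For any $r\ge 1$ and any $\theta>1$ we have $\theta r\ge 1$, so applying \eqref{2209151902} both at $\theta r$ and at $r$,
\[
U(\theta r)\le \frac{c_2}{(\theta r)^{\gamma}}=c_2\,\theta^{-\gamma}\,r^{-\gamma}\le \frac{c_2}{c_1}\,\theta^{-\gamma}\,U(r).
\]
Since \eqref{2209151902} forces $c_1\le c_2$, the number $\theta:=(2c_2/c_1)^{1/\gamma}$ satisfies $\theta\ge 2^{1/\gamma}>1$ and $\tfrac{c_2}{c_1}\theta^{-\gamma}=\tfrac12$; hence $2U(\theta r)\le U(r)$ for all $r\ge 1$, as claimed. (Iterating this gives the geometric decay $U(\theta^k r)\le 2^{-k}U(r)$, which is presumably the use of \eqref{2307150016} in the sequel.)

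The main obstacle is the barrier computation behind \eqref{2209151902}: because $(-\Delta)_p^s$ is nonlocal and, for $p\ne 2$, possesses no explicit fundamental solution, estimating $(-\Delta)_p^s$ on power-type test functions requires a careful decomposition of the defining double integral into near-diagonal, bulk and tail contributions, and isolating precisely the critical exponent $\gamma=\tfrac{n-sp}{p-1}$ — rather than a suboptimal one — as well as controlling the transition region $|x|\approx 1$ where the barrier is mollified, is where the real work lies. Everything downstream (radial monotonicity of $U$, the crude $L^{p_s^*}$ decay, the weak comparison principle, and the deduction of \eqref{2307150016}) is routine, so in practice one cites \cite{Brasco2016} (via \cite{Mosconi}) for \eqref{2209151902} and only writes out the one-line argument for \eqref{2307150016}.
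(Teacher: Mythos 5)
Your proposal is correct and matches the paper's treatment: the paper simply invokes \cite[Lemma 2.2]{Mosconi} (itself resting on the decay estimates of \cite{Brasco2016}) for \eqref{2209151902} and does not reproduce the barrier argument, so your citation plus sketch is exactly what is expected. Your derivation of \eqref{2307150016} from \eqref{2209151902} via $\theta=(2c_2/c_1)^{(p-1)/(n-sp)}$ is the standard one-line consequence and is sound, including the observation that $r=1$ forces $c_1\le c_2$ so that $\theta>1$.
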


Let $\theta$ be as in Lemma \ref{2209051345}, fix $\eta \in C^{\infty}(\mathbb{R}^n,[0,1])$ such that
\begin{equation}\label{2209151128}
\eta(x)=\left\{
\begin{array}{lll}
0,&\text{if} \, |x|\le 2\theta,\\
1,&\text{if} \, |x|\ge 3\theta,\\
\end{array}
\right.
\end{equation}
and for any $\delta>0$, denote
\begin{equation}\label{2307082048}
\eta_{\delta}(x)=\eta\left(\frac{x}{\delta}\right).
\end{equation}

\begin{lemma}\label{2209151022}
Assume that $0\in \Omega$. Then there exists a constant $C=C(n, \Omega, p, s)>0$ such that for any $v\in W_0^{s,p}(\Omega)$, $\delta>0$ satisfying $(-\Delta)^s_pv\in L^{\infty}(\Omega)$ and $B_{5\theta \delta}(0)\subset \Omega,$ there holds
\[
\lVert v\eta_\delta \rVert^p\le \lVert v \rVert^p + C\left| (-\Delta)_p^sv \right|_{\infty}^{p/(p-1)}\delta^{n-sp}.
\]
\end{lemma}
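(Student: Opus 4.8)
The plan is to estimate the Gagliardo seminorm of $v\eta_\delta$ by splitting the double integral according to whether the points $x,y$ lie inside or outside the ball $B_{3\theta\delta}(0)$, where $\eta_\delta\equiv 1$. Write $w = v\eta_\delta$. Since $\eta_\delta = 1$ on $\mathbb{R}^n\setminus B_{3\theta\delta}(0)$ and $\eta_\delta = 0$ on $B_{2\theta\delta}(0)$, the function $w-v = v(\eta_\delta - 1)$ is supported in $B_{3\theta\delta}(0)$. First I would record the algebraic decomposition
\[
\|w\|^p = \int_{\mathbb{R}^{2n}}\frac{|w(x)-w(y)|^p}{|x-y|^{n+sp}}\,\ud x\,\ud y = \|v\|^p + \big(\|w\|^p - \|v\|^p\big),
\]
and bound the difference. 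On the region where both $x,y\notin B_{3\theta\delta}(0)$ the integrand of $|w(x)-w(y)|^p$ and $|v(x)-v(y)|^p$ coincide, so that region contributes nothing. Hence the difference is controlled by the contributions of the regions $\{x\in B_{3\theta\delta}(0)\}\cup\{y\in B_{3\theta\delta}(0)\}$; by symmetry it suffices to estimate $\int_{B_{3\theta\delta}(0)}\int_{\mathbb{R}^n}$, and within it one further separates $y\in B_{5\theta\delta}(0)$ from $y\notin B_{5\theta\delta}(0)$.

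For the "near" part, $x,y\in B_{5\theta\delta}(0)\subset\Omega$, the idea is to use the equation $(-\Delta)_p^s v = g$ with $g\in L^\infty(\Omega)$ to get interior regularity of $v$ — via the results of Iannizzotto–Mosconi–Squassina (or the $C^s$-estimates quoted earlier in the paper) — giving local Hölder bounds on $v$ and hence on $v\eta_\delta$ in terms of $|g|_\infty^{1/(p-1)}$; together with the smoothness of $\eta$ (whose gradient scales like $\delta^{-1}$) this bounds $|w(x)-w(y)|$ appropriately, and integrating $|x-y|^{-n-sp}$ over the ball of radius $\sim\delta$ produces exactly a factor $\delta^{n-sp}$ after tracking the scaling. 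For the "far" part, $x\in B_{3\theta\delta}(0)$, $y\notin B_{5\theta\delta}(0)$, one has $|x-y|\gtrsim |y|$ and $|w(x)-w(y)|^p \lesssim |v(x)|^p + |v(y)|^p$; using $L^\infty$-bounds for $v$ near $0$ and the decay/integrability of $v$ (again coming from the regularity theory and from $v\in W_0^{s,p}$) one bounds this tail, again with the correct power of $\delta$ by scaling $x = \delta x'$.

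The cleanest route, and probably the one the authors intend, is a scaling/rescaling argument: set $v_\delta(x) = v(\delta x)$, note $(-\Delta)_p^s v_\delta(x) = \delta^{sp} g(\delta x)$ so $|(-\Delta)_p^s v_\delta|_\infty = \delta^{sp}|g|_\infty$, and observe $\|v_\delta\eta\|^p = \delta^{sp-n}\|v\eta_\delta\|^p$ while $\|v_\delta\|^p = \delta^{sp-n}\|v\|^p$. Thus the claimed inequality is equivalent to a $\delta$-independent statement
\[
\|v_\delta\,\eta\|^p \le \|v_\delta\|^p + C\,|(-\Delta)_p^s v_\delta|_\infty^{p/(p-1)},
\]
for $v_\delta$ defined on the dilated domain containing $B_{5\theta}(0)$, which one proves once and for all by the splitting above using a fixed cut-off $\eta$ supported in the annulus $\{2\theta\le|x|\le 3\theta\}$. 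The main obstacle is the "near" region estimate: one must convert the $L^\infty$ bound on $(-\Delta)_p^s v$ into a quantitative local Hölder (or $W^{s,p}$-local) control on $v$ near $0$ with the dependence $|g|_\infty^{1/(p-1)}$ — this is where the regularity theory for the fractional $p$-Laplacian (and the precise homogeneity $p/(p-1)$) enters, and where care is needed since the estimate must be uniform and not rely on global smoothness of $v$ beyond membership in $W_0^{s,p}(\Omega)$. Everything else is bookkeeping of scalings and elementary inequalities such as $|a+b|^p \le 2^{p-1}(|a|^p+|b|^p)$.
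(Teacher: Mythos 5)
The paper gives no proof of this lemma; it is quoted verbatim from \cite[Lemma 2.6]{Mosconi}, so the comparison is with that argument. Your proposal is not it, and there are two genuine gaps. First, the scaling reduction does not go through: $v_\delta=v(\delta\,\cdot\,)$ lives on the \emph{growing} domain $\delta^{-1}\Omega$, and the $L^\infty$ estimate $|v|_\infty\le C(\Omega)\,|(-\Delta)_p^sv|_\infty^{1/(p-1)}$ that you need has a constant that scales like $\mathrm{diam}(\Omega)^{sp/(p-1)}$. If you track this, the ``$\delta$-independent'' inequality you propose to prove at scale one must itself carry a constant of order $\delta^{-sp^2/(p-1)}$, so the $\delta$-dependence you were hoping to strip off simply re-enters through the domain; there is no genuine reduction. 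Second, the direct splitting on the near region $x\in B_{3\theta\delta}$, $y\in B_{5\theta\delta}$ cannot be closed as you indicate. If you drop the subtracted $-|v(x)-v(y)|^p$ and estimate $|\eta_\delta(x)v(x)-\eta_\delta(y)v(y)|^p$ by a product rule plus Young's inequality, you produce a term $\ge 2^{p-1}|v(x)-v(y)|^p$ whose integral over that region is of order $\|v\|^p$, not small. To keep the cancellation you must control $|w(x)-w(y)|^p-|v(x)-v(y)|^p$ by the mean value theorem, and then the diagonal forces an \emph{interior} $C^\alpha$ bound on $v$ with $\alpha>\tfrac{sp-1}{p-1}$ together with a quantitative seminorm estimate $[v]_{C^\alpha}\lesssim |(-\Delta)_p^sv|_\infty^{1/(p-1)}$. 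Proposition~\ref{2209252041} gives only \emph{some} $\alpha\le s$ with no tracked constant, and no reference quoted in the paper supplies this; it is the obstacle you flag but do not resolve.

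The missing idea that makes \cite{Mosconi}'s proof clean — and uniform for all $p>1$ with no interior regularity — is to \emph{test the weak formulation}. One tests $(-\Delta)_p^sv=g$ against $\eta_\delta^pv=v-(1-\eta_\delta^p)v$ and uses the pointwise Caccioppoli-type (discrete Leibniz) inequality, valid for $a,b\in\mathbb R$ and $\sigma,\tau\in[0,1]$,
\[
|a-b|^{p-2}(a-b)\bigl(\sigma^pa-\tau^pb\bigr)\ \ge\ |\sigma a-\tau b|^p-C(p)\,|\sigma-\tau|^p\max(|a|,|b|)^p.
\]
Applying this with $a=v(x)$, $b=v(y)$, $\sigma=\eta_\delta(x)$, $\tau=\eta_\delta(y)$, dividing by $|x-y|^{n+sp}$ and integrating yields
\[
\lVert\eta_\delta v\rVert^p\ \le\ \bigl\langle(-\Delta)_p^sv,\,\eta_\delta^pv\bigr\rangle+C\,|v|_\infty^p\,[\eta_\delta]_{s,p}^p\ =\ \lVert v\rVert^p-\int_\Omega g\,(1-\eta_\delta^p)v\,dx+C\,|v|_\infty^p\,[\eta]_{s,p}^p\,\delta^{n-sp}.
\]
The middle term is at most $|g|_\infty|v|_\infty\,|B_{3\theta\delta}|=O\bigl(|g|_\infty|v|_\infty\delta^n\bigr)$, and comparing $v$ with $\pm|g|_\infty^{1/(p-1)}e$, where $e$ is the torsion function solving \eqref{Super_Eq}, gives $|v|_\infty\le C(\Omega)|g|_\infty^{1/(p-1)}$. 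Substituting produces exactly $\lVert v\rVert^p+C|g|_\infty^{p/(p-1)}\delta^{n-sp}$. Note that $\lVert v\rVert^p$ appears with coefficient exactly one \emph{for free} because the equation was tested, which is precisely what your splitting cannot recover.
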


For any $\varepsilon, \delta>0$, we denote
\begin{equation}\label{2307150015}
M_{\varepsilon, \delta}=\frac{U_{\varepsilon}(\delta)}{U_{\varepsilon}(\delta)-U_{\varepsilon}(\theta \delta)}.
\end{equation}
Let
\[
\begin{aligned}
g_{\varepsilon, \delta}(t)=
\begin{cases}
0, ~~~~~~~~~~~~~~~~~~~~~~~~~~\,~~\text{if}~0\le t\le U_{\varepsilon}(\theta \delta),\\
M_{\varepsilon, \delta}^p(t-U_{\varepsilon}(\theta \delta)), ~~~~~~~~\text{if}~U_{\varepsilon}(\theta \delta)\le t\le U_{\varepsilon}(\delta),\\
t+U_{\varepsilon}(\delta)(M_{\varepsilon, \delta}^{p-1}-1), ~~~\text{if}~t\ge U_{\varepsilon}(\delta),
\end{cases}
\end{aligned}
\]
and
\begin{equation}\label{2307150014}
\begin{aligned}
G_{\varepsilon, \delta}(t)=\int_{0}^t g'_{\varepsilon, \delta}(\tau)^{1/p} d\tau=
\begin{cases}
0, ~~~~~~~~~~~~~~~~~~~~~~~~~~~~~\text{if}~0\le t\le U_{\varepsilon}(\theta \delta),\\
M_{\varepsilon, \delta}\,(t-U_ {\varepsilon}(\theta \delta)), ~\,~~~~~~~\text{if}~U_{\varepsilon}(\theta \delta)\le t\le U_{\varepsilon}(\delta),\\
t, ~~~~~~~~~~~~~~~~~~~~~~~~~~~~~~\text{if}~t\ge U_{\varepsilon}(\delta).
\end{cases}
\end{aligned}
\end{equation}
The functions $g_{\varepsilon, \delta}$ and $G_{\varepsilon, \delta}$ are decreasing and absolutely continuous. Then the radially nonincreasing function
\begin{equation}\label{2301082313}
u_{\varepsilon, \delta}(r)=G_{\varepsilon, \delta}(U_{\varepsilon}(r)) \leq U_{\varepsilon}(r),
\end{equation}
satisfies
\begin{equation}\label{2209151133}
u_{\varepsilon, \delta}(r)= U_{\varepsilon}(r) \text{ if}~r\le \delta \quad \mbox{and}\quad {\rm supp}(u_{\varepsilon, \delta}) \subset B_{\theta\delta}(0).
\end{equation}

\begin{lemma} There exists a constant $C=C(n,p,s)>0$ such that for any $\varepsilon\le \frac{\delta}{2}$,
 \begin{gather}
	\lVert u_{\varepsilon,\delta}\rVert^p \le S_{s,p}^{\frac{n}{sp}}+C\left(\frac{\varepsilon}{\delta}\right)^{(n-sp)/ (p-1)}\label{2209021656},\\
	 |u_{\varepsilon,\delta}|_{p^*_s}^{p_s^*}\ge S_{s,p}^{\frac{n}{sp}}-C{\left(\frac{\varepsilon}{\delta}\right)}^{\frac{n}{p-1}}\label{2209021657},\\
|u_{\varepsilon, \delta}|_p^p\ge
	\begin{cases}
      		 \frac1C\varepsilon^{sp}|\log{\left(\frac{\varepsilon}{\delta}\right)}|,~~\text{if}~~n=sp^2,\\
      		 \frac1C\varepsilon^{sp},~~~~~~~~~~~~\text{if}~~n>sp^2.\\
        \end{cases}
    \end{gather}
\end{lemma}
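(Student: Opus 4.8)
The plan is to prove the three estimates in turn; each rests on the sharp decay of $U$ from Lemma~\ref{2209051345}, on the scaling identities $[U_\varepsilon]_{s,p}^{p}=|U_\varepsilon|_{p_s^*}^{p_s^*}=S_{s,p}^{n/(sp)}$ (obtained by testing $(-\Delta)_p^sU_\varepsilon=U_\varepsilon^{p_s^*-1}$ with $U_\varepsilon$ and using that $U_\varepsilon$ realizes $S_{s,p}$, cf.\ \eqref{2209100830}), and on the fact that $u_{\varepsilon,\delta}=U_\varepsilon$ on $B_\delta(0)$ with $\mathrm{supp}\,u_{\varepsilon,\delta}\subset B_{\theta\delta}(0)$ from \eqref{2209151133}. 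The only genuinely delicate point is the Gagliardo seminorm bound \eqref{2209021656}, which I address first.

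The starting point for \eqref{2209021656} is the pointwise inequality
\[
\bigl|G_{\varepsilon,\delta}(a)-G_{\varepsilon,\delta}(b)\bigr|^{p}\le\bigl|g_{\varepsilon,\delta}(a)-g_{\varepsilon,\delta}(b)\bigr|\,|a-b|^{p-1},\qquad a,b\ge 0,
\]
which follows from $G_{\varepsilon,\delta}(t)=\int_0^t g_{\varepsilon,\delta}'(\tau)^{1/p}\,d\tau$ (see \eqref{2307150014}) by Hölder's inequality with exponents $p$ and $p/(p-1)$. Taking $a=U_\varepsilon(x)$, $b=U_\varepsilon(y)$, and using that $g_{\varepsilon,\delta}$ is nondecreasing, so $g_{\varepsilon,\delta}(U_\varepsilon(x))-g_{\varepsilon,\delta}(U_\varepsilon(y))$ has the same sign as $U_\varepsilon(x)-U_\varepsilon(y)$, one obtains the pointwise bound $|u_{\varepsilon,\delta}(x)-u_{\varepsilon,\delta}(y)|^{p}\le J_{U_\varepsilon}(x,y)\bigl(g_{\varepsilon,\delta}(U_\varepsilon(x))-g_{\varepsilon,\delta}(U_\varepsilon(y))\bigr)$. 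Since $g_{\varepsilon,\delta}$ is Lipschitz, bounded and vanishes outside $B_{\theta\delta}(0)$, the function $g_{\varepsilon,\delta}(U_\varepsilon)$ lies in $D^{s,p}(\mathbb{R}^n)$ and is admissible in the weak formulation of $(-\Delta)_p^sU_\varepsilon=U_\varepsilon^{p_s^*-1}$; dividing by $|x-y|^{n+sp}$ and integrating yields
\[
[u_{\varepsilon,\delta}]_{s,p}^{p}\le\langle(-\Delta)_p^sU_\varepsilon,\,g_{\varepsilon,\delta}(U_\varepsilon)\rangle=\int_{\mathbb{R}^n}U_\varepsilon^{\,p_s^*-1}\,g_{\varepsilon,\delta}(U_\varepsilon)\,dx.
\]
On $B_\delta(0)$ one has $g_{\varepsilon,\delta}(U_\varepsilon)=U_\varepsilon+(M_{\varepsilon,\delta}^{p-1}-1)\,U_\varepsilon(\delta)$ (recall \eqref{2307150015}), so this integral splits as $\int_{B_\delta}U_\varepsilon^{p_s^*}\,dx\le S_{s,p}^{n/(sp)}$ plus two error terms, $(M_{\varepsilon,\delta}^{p-1}-1)U_\varepsilon(\delta)\int_{B_\delta}U_\varepsilon^{p_s^*-1}\,dx$ and $\int_{B_{\theta\delta}\setminus B_\delta}U_\varepsilon^{p_s^*-1}g_{\varepsilon,\delta}(U_\varepsilon)\,dx$. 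From Lemma~\ref{2209051345} (the doubling $2U(\theta r)\le U(r)$ at $r=\delta/\varepsilon\ge 2$) we get $U_\varepsilon(\theta\delta)\le\tfrac12U_\varepsilon(\delta)$, hence $M_{\varepsilon,\delta}\le 2$; combining this with the two-sided decay $U_\varepsilon(\delta)\asymp\varepsilon^{(n-sp)/(p(p-1))}\delta^{-(n-sp)/(p-1)}$ and with $\int_{B_\delta}U_\varepsilon^{p_s^*-1}\,dx=\varepsilon^{(n-sp)/p}\int_{B_{\delta/\varepsilon}}U^{p_s^*-1}\,dy\lesssim\varepsilon^{(n-sp)/p}$ (the last integral converging since $U^{p_s^*-1}$ decays integrably at infinity), a routine computation gives that both error terms are $O\bigl((\varepsilon/\delta)^{(n-sp)/(p-1)}\bigr)$; the second is treated the same way after the crude bound $g_{\varepsilon,\delta}(U_\varepsilon)\le M_{\varepsilon,\delta}^{p}U_\varepsilon(\delta)$ on $B_{\theta\delta}\setminus B_\delta$. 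This proves \eqref{2209021656}.

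The remaining bounds follow from $u_{\varepsilon,\delta}=U_\varepsilon$ on $B_\delta(0)$ and a change of variables. For \eqref{2209021657},
\[
|u_{\varepsilon,\delta}|_{p_s^*}^{p_s^*}\ge\int_{B_\delta}U_\varepsilon^{p_s^*}\,dx=S_{s,p}^{n/(sp)}-\int_{\mathbb{R}^n\setminus B_\delta}U_\varepsilon^{p_s^*}\,dx,
\]
and after $x=\varepsilon y$ the tail is $\int_{\mathbb{R}^n\setminus B_{\delta/\varepsilon}}U^{p_s^*}\,dy\lesssim\int_{\delta/\varepsilon}^{\infty}r^{\,n-1-\frac{np}{p-1}}\,dr\asymp(\varepsilon/\delta)^{n/(p-1)}$, convergent because $\frac{np}{p-1}>n$. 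For the last estimate,
\[
|u_{\varepsilon,\delta}|_p^p\ge\int_{B_\delta}U_\varepsilon^p\,dx=\varepsilon^{sp}\int_{B_{\delta/\varepsilon}}U^p\,dy\ge\varepsilon^{sp}\Bigl(\int_{B_1}U^p\,dy+c\int_1^{\delta/\varepsilon}r^{\,n-1-\frac{(n-sp)p}{p-1}}\,dr\Bigr),
\]
and since the exponent $n-1-\frac{(n-sp)p}{p-1}$ equals $-1$ exactly when $n=sp^2$ and is $<-1$ when $n>sp^2$, the inner integral contributes $\asymp|\log(\varepsilon/\delta)|$ in the former case and is bounded below by a positive constant in the latter, which is precisely the claimed dichotomy.

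The main obstacle is the error analysis for \eqref{2209021656}: one must check that $g_{\varepsilon,\delta}(U_\varepsilon)$ is a legitimate test function for the nonlocal equation and, more importantly, that the correction terms produce the exponent $(n-sp)/(p-1)$ and nothing worse — this is exactly what forces the use of the \emph{sharp} two-sided decay of $U$ together with the uniform bound $M_{\varepsilon,\delta}\le 2$. By contrast, \eqref{2209021657} and the last estimate amount to integrating a single power of $r$, with the distinction $n=sp^2$ versus $n>sp^2$ appearing only as the threshold of integrability at infinity.
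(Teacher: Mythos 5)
The paper does not prove this lemma at all: it is quoted verbatim from Lemma~2.7 of Mosconi et al.~\cite{Mosconi}, as announced in the sentence preceding Lemma~\ref{2209051345} (``We recall some results shown in Lemmas 2.2, 2.6 and 2.7 of \cite{Mosconi} respectively''). Your argument faithfully reconstructs the proof given in that reference, essentially step for step: the H\"older inequality $|G_{\varepsilon,\delta}(a)-G_{\varepsilon,\delta}(b)|^p\le(g_{\varepsilon,\delta}(a)-g_{\varepsilon,\delta}(b))|a-b|^{p-1}$, testing $(-\Delta)_p^sU_\varepsilon=U_\varepsilon^{p_s^*-1}$ against $g_{\varepsilon,\delta}(U_\varepsilon)$, the uniform bound $M_{\varepsilon,\delta}\le 2$ from the doubling \eqref{2307150016}, and the scaling reduction to convergent (or logarithmically divergent at $n=sp^2$) radial integrals are exactly the tools used there. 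One phrasing slip worth fixing: $g_{\varepsilon,\delta}$ is an unbounded function on $[0,\infty)$ (its third branch grows linearly), and ``vanishes outside $B_{\theta\delta}(0)$'' describes the composition $g_{\varepsilon,\delta}(U_\varepsilon)$, not $g_{\varepsilon,\delta}$ itself; the admissibility of $g_{\varepsilon,\delta}(U_\varepsilon)$ as a test function follows from $g_{\varepsilon,\delta}$ being Lipschitz with $g_{\varepsilon,\delta}(0)=0$ together with $U_\varepsilon\in D^{s,p}(\mathbb{R}^n)\cap L^\infty(\mathbb{R}^n)$ and the compact support of the composition. This is cosmetic and does not affect the correctness of the argument.
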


\subsection{Further useful results}
Consider fractional equation:
\begin{equation}\label{2208122115}
(-\Delta)_p^s u \ge 0 \quad \text{in}~\Omega; \quad u=0\quad \text{in}~\mathbb{R}^n\backslash \Omega.
\end{equation}
%We call $u\in W_0^{s,p}(\Omega)$ is a supersolution to \eqref{2208122115} if
%\begin{equation*}
%\int_{\mathbb{R}^{2n}}{\frac{J_u(x,y)(\varphi(x)-\varphi(y))}{|x-y|^{n+sp}}}dxdy\ge 0
%\end{equation*}
%for all $\varphi\in C_c^{\infty}(\Omega)$, $\varphi\ge 0$.
Referring to \cite[Theorem A.1]{BF2014}, we recall the following useful maximum principle.
\begin{lemma}\label{22080821}
Let  $s\in (0,1)$, $1<p<\infty$ and $\Omega\subset \mathbb{R}^n$ be an open bounded connected set. If $u\in W_0^{s,p}(\Omega)$ is a nonnegative solution to \eqref{2208122115} and $u\not\equiv 0$ in $\Omega$, then $u>0$ almost everywhere in $\Omega$.
\end{lemma}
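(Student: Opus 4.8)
The plan is to derive this strong minimum principle from the nonlocal weak Harnack inequality for nonnegative weak supersolutions of the fractional $p$-Laplacian, combined with an open--closed connectedness argument. The first observation is that, since $u=0$ in $\mathbb{R}^n\setminus\Omega$ and $u\ge 0$ in $\Omega$, the function $u$ is nonnegative on \emph{all} of $\mathbb{R}^n$; in particular $u_-\equiv 0$, so the nonlocal ``tail'' contribution $\operatorname{Tail}(u_-;\,\cdot\,,\,\cdot\,)$ appearing in interior estimates vanishes identically. Moreover, for every ball $B_R$ with $\overline{B_R}\subset\Omega$, testing the supersolution inequality $\langle(-\Delta)_p^s u,v\rangle\ge 0$ against nonnegative $v\in W_0^{s,p}(B_R)$ (extended by zero to $\mathbb{R}^n$, which lies in $W_0^{s,p}(\Omega)$) shows that $u$ is a weak supersolution of $(-\Delta)_p^s w=0$ in $B_R$ in the sense of Di Castro--Kuusi--Palatucci, so their interior theory applies on every such ball.

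I would then invoke the weak Harnack inequality: there are constants $\bar t>0$, $\sigma>1$ and $C>0$ depending only on $n,p,s$ such that for all $x_0\in\Omega$ and $r>0$ with $B_{\sigma r}(x_0)\subset\Omega$,
\[
\left(\frac{1}{|B_{2r}(x_0)|}\int_{B_{2r}(x_0)}u^{\bar t}\,dx\right)^{1/\bar t}\le C\,\operatorname*{ess\,inf}_{B_r(x_0)}u,
\]
with no tail term, since $u_-\equiv 0$. The consequence I need is a \emph{propagation of positivity}: if $u>0$ on a subset of $B_{2r}(x_0)$ of positive Lebesgue measure, then the left-hand side is strictly positive, hence $\operatorname*{ess\,inf}_{B_r(x_0)}u>0$, i.e.\ $u>0$ a.e.\ in $B_r(x_0)$.

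With this at hand, set $A:=\{x_0\in\Omega:\ u>0\text{ a.e.\ in }B_\rho(x_0)\text{ for some }\rho>0\text{ with }\overline{B_\rho(x_0)}\subset\Omega\}$, which is open by construction. It is nonempty: since $|\{u>0\}|>0$, the set $\{u>0\}$ has a Lebesgue density point $x_1\in\Omega$, and choosing $R$ small enough that $B_{\sigma R}(x_1)\subset\Omega$ we still have $|B_{2R}(x_1)\cap\{u>0\}|>0$, so propagation of positivity yields $u>0$ a.e.\ in $B_R(x_1)$ and $x_1\in A$. It is relatively closed in $\Omega$: given $x_0\in\Omega\cap\overline A$, choose $R$ with $B_{\sigma R}(x_0)\subset\Omega$ and $y\in A$ with $|y-x_0|<R$; since $y\in B_{2R}(x_0)$, the set $B_\rho(y)\cap B_{2R}(x_0)$ is nonempty and open, hence of positive measure, and $u>0$ a.e.\ on it, so propagation of positivity again gives $u>0$ a.e.\ in $B_R(x_0)$ and $x_0\in A$. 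Since $\Omega$ is connected, $A=\Omega$, which is exactly $u>0$ a.e.\ in $\Omega$.

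The main obstacle is the clean invocation of the nonlocal weak Harnack inequality with the right hypotheses, in particular verifying that global nonnegativity of $u$ annihilates the tail term, so that one really has a tail-free Harnack estimate on interior balls; the radius bookkeeping and the topological argument are then routine. If one prefers to avoid quoting the full weak Harnack, the same conclusion follows from the logarithmic energy estimate of Di Castro--Kuusi--Palatucci applied to $u+\varepsilon$ (which is again a weak supersolution, since adding a constant leaves $(-\Delta)_p^s$ unchanged) followed by $\varepsilon\to 0^+$, but the weak Harnack route is the shortest.
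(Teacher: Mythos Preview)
Your argument is correct. The paper does not actually prove this lemma: it is stated without proof and merely cites \cite[Theorem~A.1]{BF2014} (Brasco--Franzina). So there is no ``paper's own proof'' to compare against in the strict sense; you have supplied a complete argument where the paper only quotes the literature.

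That said, it is worth noting that the route taken in the cited reference is precisely the alternative you mention at the end: Brasco--Franzina prove their Theorem~A.1 via a logarithmic energy estimate for $u+\varepsilon$ (a Caccioppoli-type inequality for $\log(u+\varepsilon)$), followed by a Poincar\'e inequality and a covering/connectedness argument, rather than by invoking the full weak Harnack inequality of Di~Castro--Kuusi--Palatucci. Your primary route through the weak Harnack is cleaner to state but relies on a heavier black box; the logarithmic-estimate route is more self-contained but requires writing out the Moser-type computation. Both lead to the same propagation-of-positivity mechanism and the same open--closed argument on the connected domain, so the difference is one of packaging rather than substance. Your observation that $u_-\equiv 0$ globally kills the tail term is exactly the point that makes either approach go through without complications.
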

The following comparison principle for fractional $p$-Laplacian can be found in \cite[Proposition 2.10]{IMS2016} (see also \cite[Lemma 9]{LL2014}).
\begin{lemma}\label{2307091452}
Let $p>1$ and $\Omega$ be an open bounded set. If $u, v\in W_0^{s,p}(\Omega)$ satisfy that for all $\varphi\in W_0^{s,p}(\Omega)$ with $\varphi\ge 0$ in $\Omega$,
\[
\int_{\mathbb{R}^{2n}}{\frac{J_u(x,y)(\varphi(x)-\varphi(y))}{|x-y|^{n+sp}}}dxdy\ge\int_{\mathbb{R}^{2n}}{\frac{J_v(x,y)(\varphi(x)-\varphi(y))}{|x-y|^{n+sp}}}dxdy,
\]
then $u\ge v$ in $\Omega$.
\end{lemma}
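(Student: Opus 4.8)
The plan is to run the standard monotonicity argument for the fractional $p$-Laplacian, testing the hypothesis with $\varphi=(v-u)^+=\max\{v-u,0\}$. First I would check that $\varphi$ is admissible: since $u,v\in W_0^{s,p}(\Omega)$ we have $v-u\in W_0^{s,p}(\Omega)$, and from $|t^+-r^+|\le|t-r|$ for all $t,r\in\mathbb{R}$ it follows that $[\varphi]_{s,p}\le[v-u]_{s,p}<\infty$; moreover $u=v=0$ a.e.\ in $\mathbb{R}^n\setminus\Omega$ forces $\varphi=0$ a.e.\ there, so $\varphi\in W_0^{s,p}(\Omega)$, and clearly $\varphi\ge0$. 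Inserting $\varphi$ into the hypothesis and subtracting the two sides gives
\[
0\ \le\ \int_{\mathbb{R}^{2n}}\frac{\big(J_u(x,y)-J_v(x,y)\big)\big(\varphi(x)-\varphi(y)\big)}{|x-y|^{n+sp}}\,dx\,dy ,
\]
the integral converging absolutely by H\"older's inequality (each $J$-term contributes a finite Gagliardo integral, namely $[u]_{s,p}^{p}$ or $[v]_{s,p}^{p}$, against which $\varphi(x)-\varphi(y)$ is paired through $[\varphi]_{s,p}$).

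The heart of the matter is the pointwise inequality $\big(J_u(x,y)-J_v(x,y)\big)\big(\varphi(x)-\varphi(y)\big)\le 0$ for a.e.\ $(x,y)\in\mathbb{R}^{2n}$. To see it, fix $x,y$, set $h=v-u$, $A=u(x)-u(y)$, $B=v(x)-v(y)$, so that $h(x)-h(y)=B-A$ and $\varphi=h^+$; then the left-hand side equals $-\big(|B|^{p-2}B-|A|^{p-2}A\big)\big(h^+(x)-h^+(y)\big)$. Since $t\mapsto|t|^{p-2}t$ is strictly increasing on $\mathbb{R}$, the factor $|B|^{p-2}B-|A|^{p-2}A$ has the same sign as $B-A=h(x)-h(y)$; and $h(x)-h(y)$ has the same sign as $h^+(x)-h^+(y)$ unless the latter vanishes (indeed, if say $h^+(x)>h^+(y)$ then $h^+(x)>0$, hence $h(x)=h^+(x)>h^+(y)\ge h(y)$, and the opposite case is symmetric). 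Thus the product of the two factors is $\ge0$, which yields the claim; note also that the product is zero exactly when $A=B$ or $h^+(x)=h^+(y)$.

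Combining the displayed inequality with the pointwise bound forces the integrand to vanish for a.e.\ $(x,y)$, so by the last observation, for a.e.\ $(x,y)$ either $A=B$ (i.e.\ $h(x)=h(y)$, whence $h^+(x)=h^+(y)$) or $h^+(x)=h^+(y)$; in either case $h^+(x)=h^+(y)$, so $\varphi=(v-u)^+$ is a.e.\ equal to a constant on $\mathbb{R}^n$. Since $\Omega$ is bounded, $\mathbb{R}^n\setminus\Omega$ has positive measure and $\varphi=0$ there, so that constant is $0$; hence $(v-u)^+\equiv0$, i.e.\ $v\le u$ a.e.\ in $\mathbb{R}^n$, in particular in $\Omega$, as desired. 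I do not expect a genuine obstacle: the argument is elementary once set up, and the only points requiring care are the admissibility of $(v-u)^+$ as a test function, the absolute convergence of the integrals that legitimizes the subtraction, and the sign bookkeeping for $t\mapsto|t|^{p-2}t$ — which is also the reason the method delivers only the weak conclusion $u\ge v$ and not a strict comparison.
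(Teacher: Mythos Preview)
Your argument is correct. The paper does not actually supply a proof of this lemma; it simply cites \cite[Proposition 2.10]{IMS2016} and \cite[Lemma 9]{LL2014}, where the result is proved by precisely the test-function method you use: insert $\varphi=(v-u)^{+}$, exploit the strict monotonicity of $t\mapsto|t|^{p-2}t$ to obtain the pointwise sign, and conclude that $(v-u)^{+}$ is a.e.\ constant, hence zero because it vanishes outside $\Omega$. One stylistic remark: after establishing the pointwise inequality, it is slightly cleaner to observe directly that the set $\{(x,y):\,(v-u)(x)>0,\ (v-u)(y)\le 0\}$ must have measure zero (since on that set the integrand is strictly negative), whence by Fubini either $\{v>u\}$ or $\{v\le u\}$ is null, and the boundedness of $\Omega$ rules out the second option; this avoids the small detour through ``either $A=B$ or $h^{+}(x)=h^{+}(y)$''. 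But your version is fine as written.
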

Iannizzotto, Mosconi and Squassina \cite{IMS2020_1} proved the following equivalence.
\begin{lemma}\label{2208251418}
Let $p\ge 2$, $s\in (0,1)$ and $n>sp$. Let $\Omega\subset \mathbb{R}^n$ be a bounded domain with a $C^{1,1}$-boundary, $f: \Omega\times\mathbb{R}\to \mathbb{R}$ be a Carath\'{e}odory mapping satisfying \eqref{2209241459}. Then, for any $u_0\in W_0^{s,p}(\Omega)$, the following are equivalent:
\begin{itemize}
\item[(i)] there exists $\rho>0$ such that $E(u_0+v)\ge E(u_0)$ for all $v\in W_0^{s,p}(\Omega)$, $\|v\|\le \rho$;
\item[(ii)] there exists $\sigma>0$ such that $E(u_0+v)\ge E(u_0)$ for all $v\in W_0^{s,p}(\Omega)\cap \mathcal{C}^{0}_s(\overline{\Omega})$, $\|v\|_{\mathcal{C}^{0}_s(\overline{\Omega})}\le \sigma$,
\end{itemize}
where $E$ is defined in \eqref{2307061350}.
\end{lemma}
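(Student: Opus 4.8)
The plan is to prove the two implications separately, using throughout that each of (i) and (ii) forces $u_0$ to be a weak solution of $(-\Delta)_p^s u_0=f(x,u_0)$: testing with $v=t\varphi$ for $\varphi\in C_c^\infty(\Omega)\subset W_0^{s,p}(\Omega)\cap\mathcal{C}^0_s(\overline{\Omega})$ and letting $t\to 0$ gives $\langle E'(u_0),\varphi\rangle=0$, and density of $C_c^\infty(\Omega)$ in $W_0^{s,p}(\Omega)$ extends this to every test function. By the $L^\infty$ bound for such solutions (Moser/Brezis--Kato iteration, valid also when $r=p_s^*$) and the global H\"older regularity of $(-\Delta)_p^s$ with bounded datum on $C^{1,1}$ domains \cite{IMS2016}, one then knows $u_0\in\mathcal{C}^0_s(\overline{\Omega})$ --- in fact $u_0/d_\Omega^s$ is H\"older continuous up to $\partial\Omega$ --- with a controlled norm.

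For (i)$\Rightarrow$(ii), fix $v\in W_0^{s,p}(\Omega)\cap\mathcal{C}^0_s(\overline{\Omega})$ with $\|v\|_{\mathcal{C}^0_s(\overline{\Omega})}\le\sigma$, so that $\|u_0+v\|_{L^\infty}$ is bounded by a constant depending only on $\sigma$ and $\|u_0\|_{\mathcal{C}^0_s(\overline{\Omega})}$. Inserting the elementary inequality $|a+b|^p\ge|a|^p+p|a|^{p-2}ab+c_p|b|^p$ (valid for $p\ge2$) into the Gagliardo double integral and using $\langle(-\Delta)_p^s u_0,v\rangle=\int_\Omega f(x,u_0)v\,dx$ gives
\[
E(u_0+v)-E(u_0)\ge\frac{c_p}{p}\,\|v\|^p-\int_\Omega\Big(F(x,u_0+v)-F(x,u_0)-f(x,u_0)v\Big)\,dx,
\]
where $F(x,t)=\int_0^t f(x,\tau)\,d\tau$. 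By the growth condition \eqref{2209241459} and the continuity of the Nemytskii operator of $f$ between the relevant Lebesgue spaces, the last integral is bounded in absolute value by $\omega(\sigma)\|v\|$ for some nondecreasing $\omega$ with $\omega(0^+)=0$; hence the right-hand side is nonnegative once $\|v\|\ge\rho(\sigma):=(p\,\omega(\sigma)/c_p)^{1/(p-1)}$. Since $\rho(\sigma)\to 0$ as $\sigma\to 0$, choose $\sigma$ with $\rho(\sigma)\le\rho$: then every admissible $v$ with $\|v\|\le\rho(\sigma)$ is covered by (i) and every one with $\|v\|\ge\rho(\sigma)$ by the displayed estimate, which proves (ii).

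The converse (ii)$\Rightarrow$(i), which is the real content, I would argue by contradiction. If $u_0$ is not a $W_0^{s,p}$-local minimizer then $m_\rho:=\inf\{E(u_0+v):\|v\|\le\rho\}<E(u_0)$ for all $\rho>0$, while $m_\rho\to E(u_0)$ by continuity of $E$ and $m_\rho>-\infty$ since $E$ is bounded below on the ball. This infimum is attained: in the subcritical case by weak lower semicontinuity together with the compact embedding $W_0^{s,p}(\Omega)\hookrightarrow L^r(\Omega)$; in the critical case $r=p_s^*$ because a concentrating minimizing sequence would, by a Brezis--Lieb/concentration-compactness splitting, satisfy $m_\rho\ge E(u)+\frac{s}{n}S_{s,p}^{n/(sp)}$ for its weak limit $u$, which lies in the same ball and hence has $E(u)\ge m_\rho$ --- impossible. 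Writing a minimizer as $u_\rho=u_0+v_\rho$, we have $\|v_\rho\|\le\rho$, $E(u_\rho)=m_\rho<E(u_0)$, so $v_\rho\neq 0$ and $u_\rho\to u_0$ in $W_0^{s,p}(\Omega)$ as $\rho\to 0$.

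Two cases then occur. If $\|v_\rho\|<\rho$ then $u_\rho$ solves $(-\Delta)_p^s u_\rho=f(x,u_\rho)$ with a uniform $W_0^{s,p}$ bound, whence a uniform $L^\infty$ bound and, by the global H\"older regularity, a uniform bound on $\|u_\rho\|_{\mathcal{C}^0_s(\overline{\Omega})}$ with a uniform H\"older modulus for $u_\rho/d_\Omega^s$; Arzel\`a--Ascoli then forces $u_\rho\to u_0$ in $\mathcal{C}^0_s(\overline{\Omega})$, which together with $E(u_\rho)<E(u_0)$ and $u_\rho\neq u_0$ contradicts (ii). If instead $\|v_\rho\|=\rho$, the Lagrange multiplier rule provides $\mu_\rho\ge 0$ with
\[
(-\Delta)_p^s u_\rho+\mu_\rho\,(-\Delta)_p^s(u_\rho-u_0)=f(x,u_\rho)\quad\text{in }\Omega,
\]
and the same contradiction follows once $u_\rho\to u_0$ in $\mathcal{C}^0_s(\overline{\Omega})$ is established. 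I expect this last case to be the crux: testing the equation with $v_\rho=u_\rho-u_0$ and combining the uniform convexity of the $W_0^{s,p}$-norm ($p\ge2$) with $\langle(-\Delta)_p^s u_0,v_\rho\rangle=\int_\Omega f(x,u_0)v_\rho\,dx$ gives quantitative control on $\mu_\rho$ (and on $\mu_\rho\|v_\rho\|^{p-1}$), but one still has to make the term $\mu_\rho(-\Delta)_p^s(u_\rho-u_0)$ harmless for the regularity estimate, the point being that the perturbed monotone operator $w\mapsto(-\Delta)_p^s w+\mu_\rho(-\Delta)_p^s(w-u_0)$, with the fixed H\"older-regular datum $u_0$, still falls within the scope of the global H\"older regularity theory with constants uniform in $\rho$, so that $u_\rho$ inherits a uniform $\mathcal{C}^0_s(\overline{\Omega})$ bound and converges to $u_0$ there. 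This boundary regularity estimate for the perturbed operator, where $p\ge2$ is genuinely needed, together with keeping the constrained minimizing level below $\frac{s}{n}S_{s,p}^{n/(sp)}$ in the critical range so that $m_\rho$ is attained, are the two delicate points.
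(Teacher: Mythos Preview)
The paper does not prove this lemma at all: it is quoted verbatim from \cite{IMS2020_1} (Iannizzotto, Mosconi, Squassina), introduced by the sentence ``Iannizzotto, Mosconi and Squassina \cite{IMS2020_1} proved the following equivalence.'' So there is no proof in the paper to compare against; you are reconstructing the argument of the cited source.

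Your outline matches the architecture of \cite{IMS2020_1}: the implication (i)$\Rightarrow$(ii) via the $p$-convexity inequality $|a+b|^p\ge|a|^p+p|a|^{p-2}ab+c_p|b|^p$ is correct and is how that direction is handled there; the implication (ii)$\Rightarrow$(i) via constrained minimization on $\overline{B_\rho(u_0)}$, splitting into interior/boundary cases and upgrading to $\mathcal{C}^0_s$-convergence, is exactly their strategy. You also correctly flag the two genuine technical points: attainment of $m_\rho$ in the critical case (your concentration-compactness sketch is right, and $m_\rho<E(u_0)+\frac{s}{n}S_{s,p}^{n/(sp)}$ for small $\rho$ since $m_\rho\to E(u_0)$), and uniform $\mathcal{C}^0_s$-regularity when a Lagrange multiplier appears.

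That said, your last paragraph is a confession rather than a proof. The statement that ``the perturbed monotone operator $w\mapsto(-\Delta)_p^s w+\mu_\rho(-\Delta)_p^s(w-u_0)$ \ldots\ still falls within the scope of the global H\"older regularity theory with constants uniform in $\rho$'' is precisely the content of \cite{IMS2020_1}, and it is not a corollary of \cite{IMS2016}: the operator is no longer $(-\Delta)_p^s$ with a bounded right-hand side, and one must redo the barrier and oscillation arguments for the two-term operator, tracking the dependence on $\mu_\rho$ (which need not be bounded) and on the fixed H\"older datum $u_0$. This is where $p\ge 2$ is used in an essential way, and it occupies most of \cite{IMS2020_1}. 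Without supplying that estimate, the boundary case of (ii)$\Rightarrow$(i) is incomplete.
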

Next is a H\"older regularity result for solutions of \eqref{2209241450}, which comes from \cite[Theorem 3.3, Remark 3.4]{CMS2018} and \cite[Theorem 1.1]{IMS2016}.
\begin{proposition}\label{2209252041}
Let $\Omega$ be a bounded domain with $C^{1,1}$ boundary, $f$ satisfy \eqref{2209241459}. Then there exists $\alpha\in (0,s]$ such that if $u\in W_0^{s,p}(\Omega)$ is a solution of \eqref{2209241450}, we have $u\in C^{\alpha}(\overline{\Omega})$.
\end{proposition}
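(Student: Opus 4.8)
The plan is to split the argument into the two regimes that already appear in the nonlocal regularity literature: first upgrade the variational solution $u\in W_0^{s,p}(\Omega)$ to an $L^\infty(\Omega)$ function, and then feed the resulting bounded right‑hand side into the global H\"older theory for $(-\Delta)_p^s$ on $C^{1,1}$ domains. For the first step, note that $u\in W_0^{s,p}(\Omega)$ already gives $u\in L^{p_s^*}(\Omega)$ by the Sobolev embedding, hence by \eqref{2209241459} we get $f(\cdot,u)\in L^{p_s^*/(r-1)}(\Omega)$. When $r<p_s^*$ one checks that $p_s^*/(r-1)$ lies strictly above $(p_s^*)'$ but in general below $n/(sp)$ (indeed $(p_s^*)'<n/(sp)$ precisely because $n>sp$), so a finite Moser bootstrap — testing \eqref{2209241450} with powers of $u$ and using the fractional Sobolev inequality at each step, the gain factor being bounded away from $1$ in the strictly subcritical case — places $f(\cdot,u)$ in some $L^m(\Omega)$ with $m>n/(sp)$; the standard boundedness estimate $\|u\|_\infty\lesssim\|u\|_{L^p}+\|g\|_{L^m}^{1/(p-1)}$ for $(-\Delta)_p^s u=g$, $g\in L^m$, $m>n/(sp)$ (see \cite{CMS2018}) then yields $u\in L^\infty(\Omega)$.

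The genuinely delicate case is $r=p_s^*$, which is exactly the situation of $(P_\lambda)$, since there the bootstrap gain degenerates. Here I would run a Brezis--Kato type truncation: test \eqref{2209241450} with $v=u\,\min\{|u|,M\}^{p(\beta-1)}$ for $\beta>1$ and $M>0$, derive the resulting fractional Caccioppoli inequality, and split the critical contribution as $|u|^{p_s^*-1}=|u|^{p_s^*-p}\,|u|^{p-1}$; since $\int_{\{|u|>K\}}|u|^{p_s^*}\to0$ as $K\to\infty$, on high super‑level sets the factor $|u|^{p_s^*-p}$ has small $L^{n/(sp)}$ norm and the critical term can be absorbed into the Gagliardo energy of the truncated function. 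Letting $M\to\infty$ and iterating in $\beta$ gives $u\in L^q_{\mathrm{loc}}(\Omega)$ for all $q<\infty$ and then local boundedness; because $u\equiv0$ on $\mathbb R^n\setminus\Omega$, the nonlocal tail term in the Caccioppoli inequality causes no trouble near $\partial\Omega$, so one upgrades to $u\in L^\infty(\Omega)$ up to the boundary. This is the core of \cite[Theorem 1.1]{IMS2016} and the $L^\infty$ part of \cite[Theorem 3.3]{CMS2018}.

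Once $u\in L^\infty(\Omega)$, \eqref{2209241459} gives $f(\cdot,u)\in L^\infty(\Omega)$, so $u$ solves $(-\Delta)_p^s u=g$ in $\Omega$ with $g\in L^\infty(\Omega)$ and $u=0$ outside, $\partial\Omega\in C^{1,1}$. Interior H\"older continuity $u\in C^\alpha_{\mathrm{loc}}(\Omega)$ is the nonlocal De Giorgi--Nash--Moser theory, and the boundary behaviour follows by comparing $u$, via the comparison principle of Lemma~\ref{2307091452}, with multiples of a regularized power of the distance $\mathrm{dist}(\cdot,\partial\Omega)^s$, whose fractional $p$-Laplacian is bounded near $\partial\Omega$ thanks to the $C^{1,1}$ regularity; together these give $u\in C^\alpha(\overline\Omega)$ for some $\alpha\in(0,s]$ depending only on $n,p,s,\Omega,\|g\|_\infty$ — precisely \cite[Theorem 1.1]{IMS2016} together with \cite[Remark 3.4]{CMS2018}. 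The only step requiring real work beyond citing known results is the critical absorption argument of the previous paragraph; the same absolute‑continuity mechanism that localizes the critical nonlinearity there is what powers the sub‑threshold Palais--Smale analysis elsewhere in the paper.
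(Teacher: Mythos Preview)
Your two-step strategy (Brezis--Kato/Moser truncation to reach $L^\infty$, then global H\"older regularity for bounded right-hand side on $C^{1,1}$ domains) is correct and is precisely what the cited references \cite{CMS2018} and \cite{IMS2016} establish; the paper itself does not give a proof of this proposition but simply records it as a consequence of \cite[Theorem 3.3, Remark 3.4]{CMS2018} and \cite[Theorem 1.1]{IMS2016}. One small attribution slip: the $L^\infty$ bound under critical growth is the content of \cite[Theorem 3.3]{CMS2018}, while \cite[Theorem 1.1]{IMS2016} is the $C^\alpha(\overline\Omega)$ estimate that takes a bounded right-hand side as input, so your sentence ``This is the core of \cite[Theorem 1.1]{IMS2016}'' at the end of the Brezis--Kato paragraph is misplaced.
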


If we assume $f(x, u)\in L^{\infty}(\Omega)$ and $f(x, u)\ge 0$, the solutions of \eqref{2209241450} will satisfy also a kind of Hopf's lemma \cite[Theorem 1.5]{DQ2017}. In particular, the solutions can be controlled by ${\text{d}^s_{\Omega}(x)}$. The decay information of solutions near the boundary is very useful to construct suitable subsolutions or supersolutions. 
\begin{proposition}\label{Comp_Le}
Let $\Omega$ be a bounded domain with $C^{1,1}$ boundary and let $u\in W_0^{s,p}(\Omega)$ be a weak solution of the following equation
\begin{equation}\label{2210122106}
\begin{cases}
\begin{aligned}
(-\Delta)^s_p u&=f ~~~\text{in}~\Omega,\\
u&=0~\,~~\text{in}~ \mathbb{R}^n\setminus\Omega,
\end{aligned}
\end{cases}
\end{equation}
where $f\in L^{\infty}(\Omega)$, $f\ge 0$ and $f\not\equiv 0$. Then there are two positive numbers $C$, $C'$ such that
\[
C{\text{d}^s_{\Omega}(x)}\le {u(x)}\le C'{\text{d}^s_{\Omega}(x)}, \quad \mbox{for a.e.}\; x\in\Omega.
\]
\end{proposition}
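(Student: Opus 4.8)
The plan is to establish the two inequalities separately, both by comparison with suitable barriers built from powers of the distance function, using the comparison principle (Lemma \ref{2307091452}). The upper bound $u(x)\le C'\,\mathrm{d}_\Omega^s(x)$ is essentially the known boundary regularity/Hopf-type estimate: since $f\in L^\infty(\Omega)$, standard results (e.g.\ \cite{IMS2016} together with the barrier constructions in the fractional $p$-Laplacian literature) give $u\in C^s(\overline\Omega)$ with $u=0$ on $\partial\Omega$, and a barrier of the form $C'\,\mathrm{d}_\Omega^s$ — or, near a boundary point, $C'\,(\mathrm{d}_\Omega)^s$ flattened into the interior — satisfies $(-\Delta)_p^s$ of it $\ge |f|_\infty$ in a neighborhood of $\partial\Omega$ once $C'$ is large; away from the boundary $u$ is bounded and $\mathrm{d}_\Omega^s$ is bounded below, so the estimate is trivial there. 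Combining via Lemma \ref{2307091452} on a boundary collar and compactness in the interior yields $u\le C'\,\mathrm{d}_\Omega^s$ on all of $\Omega$.

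For the lower bound $u(x)\ge C\,\mathrm{d}_\Omega^s(x)$, first note $u\not\equiv 0$ (since $f\ge 0$, $f\not\equiv0$), so by Lemma \ref{22080821} $u>0$ a.e.\ in $\Omega$; hence on any compact $K\Subset\Omega$ one has $u\ge c_K>0$ while $\mathrm{d}_\Omega^s$ is bounded there, so the inequality holds in the interior. The delicate region is the boundary collar $\Omega_\rho:=\{x\in\Omega:\mathrm{d}_\Omega(x)<\rho\}$. Here I would invoke the Hopf-type lemma \cite[Theorem 1.5]{DQ2017} cited just above: since $f\ge0$, $f\not\equiv0$, $f\in L^\infty$, the solution obeys a quantitative lower bound $u(x)\ge c\,\mathrm{d}_\Omega^s(x)$ near $\partial\Omega$. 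Concretely one picks a boundary point, uses the $C^{1,1}$ regularity to fit an interior ball, and compares $u$ from below with a torsion-type subsolution $\psi$ of $(-\Delta)_p^s\psi\le 0$ in $\Omega_\rho$ vanishing on $\partial\Omega$ and behaving like $\mathrm{d}^s$ — using that $u$ is already known to be bounded below by a positive constant on the inner face $\{\mathrm{d}_\Omega=\rho\}$ — and Lemma \ref{2307091452} then gives $u\ge C\psi\ge C\,\mathrm{d}_\Omega^s$ in $\Omega_\rho$.

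Finally one glues the interior and collar estimates (taking the smaller of the two constants for the lower bound, the larger for the upper bound) to obtain the stated two-sided bound for a.e.\ $x\in\Omega$.

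I expect the main obstacle to be the lower bound near $\partial\Omega$: the nonlinearity and nonlocality of $(-\Delta)_p^s$ mean one cannot simply invoke a linear Hopf lemma, and one must carefully construct a nonlocal subsolution comparable to $\mathrm{d}_\Omega^s$ and control the contribution of the nonlocal tail coming from the region where the barrier is truncated. Once the quantitative Hopf estimate \cite[Theorem 1.5]{DQ2017} is available in the form needed, the rest is a routine patching argument.
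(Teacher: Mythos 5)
Your proposal follows essentially the same route as the paper: positivity via Lemma \ref{22080821}, the lower bound $u\ge C\,\mathrm{d}_\Omega^s$ from the Hopf-type lemma \cite[Theorem 1.5]{DQ2017}, and the upper bound from the boundary estimates of \cite{IMS2016}. One small imprecision: global H\"older regularity (\cite[Theorem 1.1]{IMS2016}) only gives $u\in C^\alpha(\overline\Omega)$ for some $\alpha\le s$, which by itself does not imply $u\le C'\,\mathrm{d}_\Omega^s$; the paper instead invokes the separate boundary estimate \cite[Theorem 4.4]{IMS2016}, which is the precise form of the barrier argument you sketch, so you need not reconstruct the barriers from scratch.
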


\begin{proof}
By Lemmas \ref{22080821} and \ref{2307091452}, we have $u>0$ a.e. in $\Omega$. Since $f\in L^{\infty}(\Omega)$, the H\"{o}lder continuity follows from \cite[Theorem 1.1]{IMS2016}. From the Hopf's lemma \cite[Theorem 1.5]{DQ2017}, there exists $C$ such that $C {\text{d}^s_{\Omega}(x)}\le u(x).$ Morover, it follows from \cite[Theorem 4.4]{IMS2016} that there exists $C'$ such that $C' {\text{d}^s_{\Omega}(x)} \ge u(x)$.
\end{proof}

At last, we summarize the super-subsolution method as follows.
\begin{proposition}\label{Super_Sub_Le}
Let $f: \Omega\times\mathbb{R}\to \mathbb{R}$ be a Carath\'{e}odory mapping satisfying that $f(x, t)$ is continuous and increasing in $t$ for a.e. $x\in\Omega$. Let $\underline{u}\in W_0^{s,p}(\Omega)$ be a subsolution to \eqref{2209241450}, and $\overline{u}\in W_0^{s,p}(\Omega)$ be a supersolution to \eqref{2209241450} such that $\underline{u}\le \overline{u}$ and
\[
f(\cdot, \underline{u}(\cdot)),~f(\cdot, \overline{u}(\cdot))\in L^{\frac{p_s^*}{p_s^*-1}}(\Omega).
\]
Then there exists a weak solution $u\in W_0^{s,p}(\Omega)$ of \eqref{2209241450} such that $\underline{u}\le u \le \overline{u}$.
\end{proposition}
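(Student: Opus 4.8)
The plan is to construct the solution by \textbf{monotone iteration} starting from the subsolution, keeping every iterate trapped between $\underline{u}$ and $\overline{u}$ by means of the comparison principle (Lemma \ref{2307091452}), and then upgrading the resulting monotone a.e.\ convergence to strong convergence in $W_0^{s,p}(\Omega)$ so that $(-\Delta)_p^s$ passes to the limit. First note that, since $\underline{u}\le\overline{u}$ and $f(x,\cdot)$ is increasing, the function $h:=\max\{|f(\cdot,\underline{u}(\cdot))|,|f(\cdot,\overline{u}(\cdot))|\}$ belongs to $L^{p_s^*/(p_s^*-1)}(\Omega)$ and satisfies $|f(x,v(x))|\le h(x)$ a.e.\ for every measurable $v$ with $\underline{u}\le v\le\overline{u}$; by Hölder's inequality and the embedding $W_0^{s,p}(\Omega)\hookrightarrow L^{p_s^*}(\Omega)$, any such $f(\cdot,v(\cdot))$ defines an element of $W_0^{s,p}(\Omega)^*$. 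Set $u_0:=\underline{u}$, and given $u_k$ with $\underline{u}\le u_k\le\overline{u}$, let $u_{k+1}\in W_0^{s,p}(\Omega)$ be the \emph{unique} solution of $(-\Delta)_p^s u_{k+1}=f(\cdot,u_k)$ in $\Omega$ with $u_{k+1}=0$ in $\mathbb{R}^n\setminus\Omega$; it exists and is unique as the unique minimizer over $W_0^{s,p}(\Omega)$ of the strictly convex, coercive, weakly lower semicontinuous functional $w\mapsto\frac1p\|w\|^p-\int_\Omega f(x,u_k)w\,dx$ (strict convexity coming from the uniform convexity of $W_0^{s,p}(\Omega)$).

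Next I will show by induction that $\underline{u}=u_0\le u_1\le u_2\le\cdots\le\overline{u}$ a.e.\ in $\Omega$. Since $\underline{u}$ is a subsolution, for every $\varphi\in W_0^{s,p}(\Omega)$ with $\varphi\ge0$ one has $\langle(-\Delta)_p^s u_1,\varphi\rangle=\int_\Omega f(x,u_0)\varphi\,dx\ge\langle(-\Delta)_p^s\underline{u},\varphi\rangle$, so Lemma \ref{2307091452} gives $u_1\ge u_0$; since $\overline{u}$ is a supersolution and $u_0=\underline{u}\le\overline{u}$, one has $\langle(-\Delta)_p^s\overline{u},\varphi\rangle\ge\int_\Omega f(x,\overline{u})\varphi\,dx\ge\int_\Omega f(x,u_0)\varphi\,dx=\langle(-\Delta)_p^s u_1,\varphi\rangle$, so Lemma \ref{2307091452} gives $\overline{u}\ge u_1$. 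The inductive step is identical: from $u_{k-1}\le u_k$ and the monotonicity of $f$ one gets $\langle(-\Delta)_p^s u_{k+1},\varphi\rangle\ge\langle(-\Delta)_p^s u_k,\varphi\rangle$ for all $\varphi\ge0$, hence $u_{k+1}\ge u_k$, and likewise $u_{k+1}\le\overline{u}$. Consequently $u_k(x)$ increases to some $u(x)$ for a.e.\ $x\in\Omega$, with $\underline{u}\le u\le\overline{u}$. Testing the $k$-th equation with $u_{k+1}$ itself gives $\|u_{k+1}\|^p=\int_\Omega f(x,u_k)u_{k+1}\,dx\le\int_\Omega h\,|u_{k+1}|\,dx\le C\,|h|_{p_s^*/(p_s^*-1)}\|u_{k+1}\|$, so $\{u_k\}$ is bounded in $W_0^{s,p}(\Omega)$; since its a.e.\ limit is $u$, it follows that $u\in W_0^{s,p}(\Omega)$ and $u_k\rightharpoonup u$ in $W_0^{s,p}(\Omega)$.

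It remains to pass to the limit in $\langle(-\Delta)_p^s u_{k+1},\varphi\rangle=\int_\Omega f(x,u_k)\varphi\,dx$. The right-hand side tends to $\int_\Omega f(x,u)\varphi\,dx$ by dominated convergence, using $f(x,u_k(x))\to f(x,u(x))$ a.e.\ and $|f(x,u_k)\varphi|\le h|\varphi|\in L^1(\Omega)$. For the left-hand side I will show $u_k\to u$ \emph{strongly} in $W_0^{s,p}(\Omega)$, after which the continuity of $(-\Delta)_p^s\colon W_0^{s,p}(\Omega)\to W_0^{s,p}(\Omega)^*$ concludes the proof. Testing the $k$-th equation with $u_{k+1}-u$ gives $\langle(-\Delta)_p^s u_{k+1},u_{k+1}-u\rangle=\int_\Omega f(x,u_k)(u_{k+1}-u)\,dx$, and $\int_\Omega h\,|u_{k+1}-u|\,dx\to0$: the family $\{h\,|u_{k+1}-u|\}_k$ is uniformly integrable since $\int_A h\,|u_{k+1}-u|\le|h\chi_A|_{p_s^*/(p_s^*-1)}\big(|u_{k+1}|_{p_s^*}+|u|_{p_s^*}\big)\le C\,|h\chi_A|_{p_s^*/(p_s^*-1)}$ is small uniformly in $k$ when $|A|$ is small (absolute continuity of the integral of $h^{p_s^*/(p_s^*-1)}$), while $h\,|u_{k+1}-u|\to0$ a.e., so Vitali's theorem applies. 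Hence $\langle(-\Delta)_p^s u_{k+1},u_{k+1}-u\rangle\to0$, and combining this with $\langle(-\Delta)_p^s u,u_{k+1}-u\rangle\to0$ (from $u_{k+1}\rightharpoonup u$) and the $(S_+)$ property of the fractional $p$-Laplacian (which follows from the uniform convexity of $W_0^{s,p}(\Omega)$), we get $u_k\to u$ in $W_0^{s,p}(\Omega)$. Therefore $u$ is a weak solution of \eqref{2209241450} with $\underline{u}\le u\le\overline{u}$. The main obstacle is exactly this last step — turning the monotone pointwise convergence into strong convergence in $W_0^{s,p}(\Omega)$ so that the nonlinear operator passes to the limit — and it is precisely the $L^{p_s^*/(p_s^*-1)}$-integrability of $f(\cdot,\underline{u})$ and $f(\cdot,\overline{u})$ that supplies the uniform integrability powering it.
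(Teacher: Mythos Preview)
Your proof is correct and follows the same monotone iteration scheme as the paper: start from $u_0=\underline{u}$, solve $(-\Delta)_p^s u_{k+1}=f(\cdot,u_k)$, use the comparison principle to trap the iterates between $\underline{u}$ and $\overline{u}$, and bound $\|u_k\|$ by testing against $u_{k+1}$.

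The only notable difference is in the final passage to the limit. You work harder than necessary: you prove \emph{strong} convergence $u_k\to u$ in $W_0^{s,p}(\Omega)$ via Vitali's theorem and the $(S_+)$ property. The paper instead stops at weak convergence plus a.e.\ convergence, which already suffices to pass to the limit in $\langle(-\Delta)_p^s u_{k+1},v\rangle$. The reason is the argument that appears later as Step~2 in the proof of Proposition~\ref{2209210928}: setting $\varPhi_k(x,y)=|x-y|^{-(n+sp)/p'}J_{u_k}(x,y)$, the boundedness of $\{u_k\}$ gives boundedness of $\{\varPhi_k\}$ in $L^{p'}(\mathbb{R}^{2n})$, and a.e.\ convergence upgrades this to weak $L^{p'}$ convergence, which is exactly what is needed when testing against $|x-y|^{-(n+sp)/p}(v(x)-v(y))\in L^p(\mathbb{R}^{2n})$. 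Your route is more self-contained and yields a stronger conclusion; the paper's is shorter but relies on an argument it only spells out later.
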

\begin{proof}
Let $u_0:=\underline{u}$, and by induction we define $u_j$ $(j\ge 1)$ as solutions of 
\begin{equation}\label{2208222050}
\begin{cases}
\begin{aligned}
(-\Delta)_p^s u_{j+1}&=f(x, u_j) &&\text{in}~\Omega,\\
u_{j+1}&=0~~~~&&\text{in}~ \mathbb{R}^n\setminus\Omega.\\
\end{aligned}
\end{cases}
\end{equation}
By comparison principle, then $\underline{u}\le u_j \le u_{j+1} \le \overline{u}$. Consequently,
\begin{equation*}
|u_j|\le \max\{|\underline{u}|, |\overline{u}|\}, \quad |f(x, u_j)|\le \max\{|f(x, \underline{u})|, |f(x, \overline{u})|\}.
\end{equation*}
Multiplying \eqref{2208222050} by $u_{j+1}$,
\[
\begin{aligned}
\lVert u_{j+1}\rVert^{p}=&\int_{\Omega} f(x, u_j)u_{j+1} dx & \le  \int_{\mathbb{R}^n}  \max\{|\underline{u}|, |\overline{u}|\}\max\{|f(x, \underline{u})|, |f(x, \overline{u})|\} dx.
\end{aligned}
\]
Therefore $\{u_j\}$ is a bounded sequence in $W_0^{s,p}(\Omega)$, and $u_j$ weakly converges to some $u$ in $W_0^{s,p}(\Omega)$, $u_j(x)\rightarrow u(x)$ a.e. in $\mathbb{R}^n$. Consequently, $u$ is a weak solution to \eqref{2209241450} and $\underline{u}\le u \le \overline{u}$.
\end{proof}

\section{Proof of Theorem \ref{thm1.1}}\label{2306231254}
%Note that due to the nonlinear of $(-\Delta)_p^s$, we can't derive  the strong comparison principle directly:
%$$(-\Delta)_p^su_1\le  (-\Delta)_p^su_2\quad \text{and}\quad u_1\neq u_2\Rightarrow u_1<u_2.$$
%There are some papers to study the strong comparison principle, but the conditions in their theorem are very restrictive. In order to
%\begin{lemma}
%Assume that $f_\lambda: \Omega\times\mathbb{R}^+\to \mathbb{R}^+$ satisfy \eqref{2209241459} and for $\Tilde{\lambda}>\lambda>0$, $M>0$, there exists $l>1$ such that
%\[
%f_{\lambda}(x, lt)\le f_{\Tilde{\lambda}}(x, t)\quad \text{for}~0\le t\le M,~x\in \Omega.
%\]
%If $u_\lambda, u_{\Tilde{\lambda}}\in W_0^{s,p}(\Omega)$ are respectively the positive solutions of
%\[
%(-\Delta)_p^s u=f_\lambda(x, u)\quad \text{and} \quad (-\Delta)_p^s u=f_{\Tilde{\lambda}}(x, u),
%\]
%\end{lemma}
Let
\begin{equation}\label{2307091941}
\Lambda:=\sup\{\lambda>0: (P_\lambda)~ \text{has a positive solution in }\; W_0^{s, p}(\Omega)\}.
\end{equation}
We first verify that $\Lambda$ is finite and nonzero. For simplicity, we will not repeat always the regularity assumption for $\Omega$ and we also often omit to repeat $n > sp$. 
\begin{lemma}\label{2209270925}
For $s\in (0,1)$, $p>q>1$ and $n>sp$, we have $0<\Lambda<\infty$.
\end{lemma}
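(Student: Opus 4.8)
The plan is to prove the two bounds independently. The inequality $\Lambda>0$ will come from exhibiting, for all sufficiently small $\lambda$, an ordered pair of sub/supersolutions of $(P_\lambda)$ and invoking the sub-supersolution method of Proposition~\ref{Super_Sub_Le}; the inequality $\Lambda<\infty$ will come from an elementary scalar estimate combined with the weak comparison principle (Lemma~\ref{2307091452}) and the two-sided $\mathrm{d}_\Omega^s$-bounds of Proposition~\ref{Comp_Le}.

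\textbf{Lower bound.} Let $e\in W_0^{s,p}(\Omega)$ be the positive torsion function, the minimizer of $u\mapsto\frac1p\|u\|^p-\int_\Omega u\,dx$, which solves $(-\Delta)_p^s e=1$ in $\Omega$; Propositions~\ref{2209252041} and \ref{Comp_Le} give $c_e\,\mathrm{d}_\Omega^s\le e\le C_e\,\mathrm{d}_\Omega^s$, so $\|e\|_\infty<\infty$. Take $\overline u=M e$: by homogeneity $(-\Delta)_p^s\overline u\equiv M^{p-1}$, so $\overline u$ is a supersolution of $(P_\lambda)$ once $M^{p-1}\ge\lambda M^{q-1}\|e\|_\infty^{q-1}+M^{p_s^*-1}\|e\|_\infty^{p_s^*-1}$; since $p_s^*>p$, fixing $M=M_0$ small makes the last term at most $\tfrac12 M_0^{p-1}$, and then (using $q<p$) the inequality holds for all $0<\lambda\le\lambda_0$, a suitable constant depending only on $M_0,\|e\|_\infty,p,q$. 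For the subsolution take $\underline u=\varepsilon\varphi_1$: since $0<\varphi_1\le1$ and $q<p$, $(-\Delta)_p^s\underline u=\lambda_1\varepsilon^{p-1}\varphi_1^{p-1}\le\lambda\varepsilon^{q-1}\varphi_1^{q-1}\le\lambda\underline u^{q-1}+\underline u^{p_s^*-1}$ whenever $\lambda_1\varepsilon^{p-q}\le\lambda$. Moreover $\underline u\le\overline u$ because Proposition~\ref{Comp_Le} yields $\varphi_1\le C\,\mathrm{d}_\Omega^s\le(C/c_e)e$, so it is enough to shrink $\varepsilon$ further. As $t\mapsto\lambda t^{q-1}+t^{p_s^*-1}$ is continuous and increasing on $[0,\infty)$ and $\underline u,\overline u\in L^\infty(\Omega)$, Proposition~\ref{Super_Sub_Le} yields a solution $u$ with $\varepsilon\varphi_1\le u\le M_0 e$, which is a positive solution of $(P_\lambda)$; hence $\Lambda\ge\lambda_0>0$.

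\textbf{Upper bound.} The scalar input is: for every $\lambda>0$ and $t>0$,
\[
\lambda t^{q-1}+t^{p_s^*-1}\ge c(\lambda)\,t^{p-1},\qquad c(\lambda):=\inf_{t>0}\big(\lambda t^{q-p}+t^{p_s^*-p}\big);
\]
minimizing over $t$ (the infimum is attained since $q<p<p_s^*$) gives $c(\lambda)=\kappa\,\lambda^{(p_s^*-p)/(p_s^*-q)}$ with $\kappa=\kappa(n,p,q,s)>0$, so $c(\lambda)\to\infty$ as $\lambda\to\infty$. Now let $u$ be any positive solution of $(P_\lambda)$. By Proposition~\ref{2209252041}, $u\in L^\infty(\Omega)$, hence $\lambda u^{q-1}+u^{p_s^*-1}\in L^\infty(\Omega)$ is nonnegative and not identically $0$, so Proposition~\ref{Comp_Le} gives $C_1\mathrm{d}_\Omega^s\le u\le C_2\mathrm{d}_\Omega^s$, and likewise $c_\varphi\mathrm{d}_\Omega^s\le\varphi_1\le C_\varphi\mathrm{d}_\Omega^s$. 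Thus $T:=\sup\{t>0:u\ge t\varphi_1\ \text{a.e.\ in }\Omega\}$ lies in $(0,\infty)$ and $u\ge T\varphi_1$. If $c(\lambda)>\lambda_1$, put $T':=(c(\lambda)/\lambda_1)^{1/(p-1)}T>T$; then for every $\phi\in W_0^{s,p}(\Omega)$ with $\phi\ge0$,
\begin{align*}
\langle(-\Delta)_p^s u,\phi\rangle&=\int_\Omega\big(\lambda u^{q-1}+u^{p_s^*-1}\big)\phi\,dx\ge c(\lambda)\int_\Omega u^{p-1}\phi\,dx\\
&\ge c(\lambda)\,T^{p-1}\int_\Omega\varphi_1^{p-1}\phi\,dx=\lambda_1(T')^{p-1}\int_\Omega\varphi_1^{p-1}\phi\,dx=\langle(-\Delta)_p^s(T'\varphi_1),\phi\rangle,
\end{align*}
so Lemma~\ref{2307091452} forces $u\ge T'\varphi_1$, contradicting the maximality of $T$. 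Hence $c(\lambda)\le\lambda_1$ whenever $(P_\lambda)$ has a positive solution, i.e.\ $\Lambda\le(\lambda_1/\kappa)^{(p_s^*-q)/(p_s^*-p)}<\infty$.

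\textbf{Main obstacle.} I expect the only genuine subtleties to be: (i) the supersolution must be admissible in $W_0^{s,p}(\Omega)$ — constants are not, which forces the use of a multiple of the torsion function, and its $L^\infty$-bound is what makes the two scalar conditions on $M$ and on $\lambda$ simultaneously satisfiable; and (ii) the sliding step in the upper bound, where the two-sided $\mathrm{d}_\Omega^s$-estimates of Proposition~\ref{Comp_Le} are exactly what guarantee $T\in(0,\infty)$ and allow $u$ to be compared with dilates of $\varphi_1$ via Lemma~\ref{2307091452} — this is the substitute for the Picone-identity argument available only when $p=2$.
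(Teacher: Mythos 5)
Your lower‑bound argument is essentially the paper's own: $Me$ (multiple of the torsion function) as supersolution, $\varepsilon\varphi_1$ as subsolution, and Proposition~\ref{Super_Sub_Le}; the estimates and the role of $\|e\|_\infty$ match exactly.

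Your upper‑bound argument is correct but takes a genuinely different route. The paper argues by contradiction: it invokes the isolation of $\lambda_1$ (from \cite{BP2016}) to pick a non‑eigenvalue $\widetilde\lambda>\lambda_1$, then chooses $\widehat\lambda$ so large that $\widehat\lambda t^{q-1}+t^{p_s^*-1}\ge\widetilde\lambda\, t^{p-1}$ for all $t\ge 0$, and uses a solution of $(P_{\widehat\lambda})$ together with $\varepsilon\varphi_1$ as super/subsolution pair to manufacture a positive solution of $(-\Delta)_p^s u=\widetilde\lambda\,u^{p-1}$, contradicting the choice of $\widetilde\lambda$. You instead run a sliding argument against $\varphi_1$: you extract the scalar bound $\lambda t^{q-1}+t^{p_s^*-1}\ge c(\lambda)\,t^{p-1}$ with $c(\lambda)=\kappa\lambda^{(p_s^*-p)/(p_s^*-q)}$, use the two‑sided $\mathrm{d}_\Omega^s$‑estimates of Proposition~\ref{Comp_Le} to make $T=\sup\{t>0:u\ge t\varphi_1\}$ finite and positive, and show via the weak comparison principle (Lemma~\ref{2307091452}) that $c(\lambda)>\lambda_1$ would force $u\ge T'\varphi_1$ with $T'>T$, a contradiction; hence $c(\lambda)\le\lambda_1$. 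Your computation $c(\lambda)T^{p-1}=\lambda_1(T')^{p-1}$ is consistent and the comparison‑principle hypotheses are met. Your route is arguably more elementary (it avoids citing the isolation of $\lambda_1$ and the full super‑subsolution machinery for the upper bound) and yields an explicit quantitative bound $\Lambda\le(\lambda_1/\kappa)^{(p_s^*-q)/(p_s^*-p)}$, whereas the paper's version only establishes finiteness; on the other hand, the paper's argument is a direct analogue of its own existence machinery and requires no sliding set‑up.
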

\begin{proof}
Let $e$ be the solution of the following equation
\begin{equation}\label{Super_Eq}
\left\{
\begin{split}
(-\Delta)_p^s e&=1 ~~~~~\text{in}~\Omega,\\
e&=0 ~~~~~\text{in}~ \mathbb{R}^n\setminus\Omega.
\end{split}
\right.
\end{equation}
Since $1<q<p<p_s^*$, we can find $\lambda_0, M>0$ small satisfying
\[
\lambda_0 M^{q-p} |e|^{q-1}_{\infty}+M^{p_s^*-p} | e |^{p_s^*-1}_{\infty} \leq 1,
\]
which implies that for $0<\lambda\le\lambda_0$,
\[
\begin{split}
(-\Delta)_p^s (Me)&=M^{p-1}\ge \lambda M^{q-1}| e |^{q-1}_{\infty}+M^{p_s^*-1} | e |^{p_s^*-1}_{\infty}\ge \lambda(Me)^{q-1}+(Me)^{p_s^*-1}.
\end{split}
\]
So $Me$ is a supersolution of $(P_\lambda)$. Fix any $0<\lambda\le\lambda_0$, there exists $\varepsilon_0 \in (0, 1)$ (recall that $\varphi_1\in C(\overline{\Omega})$, $|\varphi_1|_\infty=1$ and $\varphi_1>0$ in $\Omega$), such that
$$\lambda_1 \le \lambda(\varepsilon \varphi_1)^{q-p} \le \lambda(\varepsilon \varphi_1)^{q-p}+(\varepsilon \varphi_1)^{p_s^*-p}, \quad \forall\; 0<\varepsilon<\varepsilon_0.$$
We check readily that $\varepsilon \varphi_1$ is a subsolution of $(P_\lambda)$. According to Proposition \ref{Comp_Le}, we can choose $\varepsilon > 0$ small enough such that $\varepsilon \varphi_1\le Me$. By Proposition \ref{Super_Sub_Le},  $(P_\lambda)$ has a positive solution, which yields $\Lambda>0$. \par
From \cite[Theorem 4.1]{BP2016}, we know that $\lambda_1$ is an isolated eigenvalue, so there is $\widetilde{\lambda}>\lambda_1$, which is not an eigenvalue of $(-\Delta)_p^s$. Assume by contradiction that $\Lambda=\infty$, we can choose then $$\widehat \lambda > \max_{t \ge 0} \big(\widetilde{\lambda}t^{p-q} - t^{p_s^*-q}\big)$$ such that $(P_{\widehat{\lambda}})$ has a positive solution $u_{\widehat{\lambda}}$. Hence $u_{\widehat{\lambda}}$ is a supersolution of the following equation
\begin{equation}\label{2209141646}
\left\{
\begin{split}
(-\Delta)_p^s u&=\widetilde{\lambda} |u|^{p-2}u &&\text{in}~\Omega,\\
u&=0~~~~~~~~&& \text{in}~ \mathbb{R}^n\setminus\Omega.\end{split}
\right.
\end{equation}
Applying again Propositions \ref{2209252041}--\ref{Comp_Le}, there is $\varepsilon>0$ small such that $\varepsilon \varphi_1\le u_{\widehat{\lambda}}$. Since $\lambda_1<\widetilde{\lambda}$, $\varepsilon \varphi_1$ is a subsolution to \eqref{2209141646}. By Proposition \ref{Super_Sub_Le}, there exists a positive solution of \eqref{2209141646}, hence a contradiction occurs with the choice of $\widetilde \lambda$. It means that $0<\Lambda<\infty$.
\end{proof}

%\begin{lemma}
%Assume that $f_1, f_2$ satisfy \eqref{2209241459} and $f_1(x, t)\le f_2(x, t)$ for a.e. $x\in \Omega$, $t\in\mathbb{R}$.
%\end{lemma}

In the sequel, we are going to find the minimal positive solution of $(P_\lambda)$. We claim that the minimal solution can be derived by iterating from the solution of $(Q_\lambda)$. The first step is to prove that the solution of $(Q_\lambda)$ is unique.
\begin{lemma}\label{Unique_Le}
Let $s\in (0,1)$, $p>1$, $n\ge 1$, $q\in(1,p)$ and $\lambda>0$. Then there exists a unique solution $v_\lambda$ to $(Q_\lambda)$. Moreover, for any $\lambda>0$, $v_\lambda = \lambda^\frac{1}{p-q}v_1$.
\end{lemma}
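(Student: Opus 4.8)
The plan is to establish existence of a solution to $(Q_\lambda)$ by direct minimization, to establish uniqueness by a scaling (sliding) argument that relies on the comparison principle together with the sublinearity $q-1<p-1$ rather than on any strong comparison principle, and finally to deduce the homogeneity relation $v_\lambda=\lambda^{1/(p-q)}v_1$ from uniqueness. For existence I would minimize the energy $J_\lambda(u)=\frac1p\|u\|^p-\frac{\lambda}{q}\int_\Omega (u^+)^q\,dx$ on $W_0^{s,p}(\Omega)$: since $q<p$ and the embedding $W_0^{s,p}(\Omega)\hookrightarrow L^q(\Omega)$ is continuous, $J_\lambda$ is coercive, and since that embedding is compact and $\|\cdot\|$ is sequentially weakly lower semicontinuous, $J_\lambda$ attains a global minimum at some $v_\lambda$. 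Testing with $t\varphi_1$ shows $J_\lambda(t\varphi_1)=\frac{t^p}{p}\|\varphi_1\|^p-\frac{\lambda t^q}{q}\int_\Omega\varphi_1^q<0$ for small $t>0$, so $v_\lambda\neq0$; replacing $v_\lambda$ by $|v_\lambda|$, which does not increase $\|\cdot\|$ and leaves the second term unchanged, I may take $v_\lambda\ge0$. Then $v_\lambda$ is a nonnegative weak solution of $(-\Delta)_p^s v_\lambda=\lambda v_\lambda^{q-1}$; Lemma \ref{22080821} makes $v_\lambda>0$ a.e.\ in $\Omega$, Proposition \ref{2209252041} gives $v_\lambda\in C^\alpha(\overline\Omega)$ so that $\lambda v_\lambda^{q-1}\in L^\infty(\Omega)$, and Proposition \ref{Comp_Le} then gives $C\,\mathrm{d}_\Omega^s\le v_\lambda\le C'\,\mathrm{d}_\Omega^s$; hence $v_\lambda$ solves $(Q_\lambda)$.

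The main obstacle is uniqueness, since there is no strong comparison principle for $(-\Delta)_p^s$ at our disposal. Let $u_1,u_2$ be two solutions of $(Q_\lambda)$. By Proposition \ref{Comp_Le} each is squeezed between two positive multiples of $\mathrm{d}_\Omega^s$, so $t^\ast:=\sup\{t>0:\ tu_1\le u_2\ \text{in}\ \Omega\}$ is well defined with $t^\ast\in(0,\infty)$, and letting $t\uparrow t^\ast$ gives $t^\ast u_1\le u_2$. Suppose $t^\ast<1$. From $u_2\ge t^\ast u_1$ we get $u_2^{q-1}\ge (t^\ast)^{q-1}u_1^{q-1}$ pointwise in $\Omega$, while $q-1<p-1$ and $t^\ast<1$ give $(t^\ast)^{q-1}>(t^\ast)^{p-1}$, so for $\varepsilon>0$ small enough $(t^\ast+\varepsilon)^{p-1}\le(t^\ast)^{q-1}$; therefore
\[
(-\Delta)_p^s\big((t^\ast+\varepsilon)u_1\big)=(t^\ast+\varepsilon)^{p-1}\lambda u_1^{q-1}\le(t^\ast)^{q-1}\lambda u_1^{q-1}\le\lambda u_2^{q-1}=(-\Delta)_p^s u_2
\]
holds weakly in $\Omega$, with both functions vanishing in $\mathbb{R}^n\setminus\Omega$. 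The comparison principle, Lemma \ref{2307091452}, then yields $(t^\ast+\varepsilon)u_1\le u_2$ in $\Omega$, contradicting the maximality of $t^\ast$. Hence $t^\ast\ge1$, so $u_1\le u_2$, and by symmetry $u_1=u_2$. The only delicate points are that $t^\ast\in(0,\infty)$ and that $t^\ast u_1\le u_2$, both of which follow at once from the two-sided $\mathrm{d}_\Omega^s$ bounds of Proposition \ref{Comp_Le}.

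The scaling relation is then immediate. For $c>0$ one has $(-\Delta)_p^s(cv_1)=c^{p-1}(-\Delta)_p^s v_1=c^{p-1}v_1^{q-1}=c^{p-q}(cv_1)^{q-1}$, so $cv_1$ solves $(Q_\lambda)$ precisely when $c^{p-q}=\lambda$, i.e.\ $c=\lambda^{1/(p-q)}$; since $cv_1\in W_0^{s,p}(\Omega)$ is positive in $\Omega$, the uniqueness just proved forces $v_\lambda=\lambda^{1/(p-q)}v_1$.
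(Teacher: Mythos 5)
Your proposal is correct and follows essentially the same route as the paper: direct minimization for existence, the $\mathrm{d}_\Omega^s$ two-sided bounds from Proposition \ref{Comp_Le} to set up a sliding constant, the weak comparison principle together with the $(p-1)$-homogeneity of $(-\Delta)_p^s$ and the sublinearity $q-1<p-1$ to close the sliding argument, and homogeneity for the formula $v_\lambda=\lambda^{1/(p-q)}v_1$. The only presentational difference is in the final step of uniqueness: the paper derives $\beta_0\ge\beta_0^{(q-1)/(p-1)}$ directly and concludes $\beta_0\ge1$, whereas you argue by contradiction with an $\varepsilon$-push past the optimal constant; these are the same idea. (One small wording slip: replacing $v_\lambda$ by $|v_\lambda|$ does not leave $\int_\Omega(v_\lambda^+)^q$ unchanged in general but can only increase it, which still gives $J_\lambda(|v_\lambda|)\le J_\lambda(v_\lambda)$, so the conclusion stands.)
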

\begin{proof}
Consider the functional
\[
E_\lambda(v)=\frac\lambda p\rVert v \lVert^p-\frac1q |v^+|_q^q,\quad v\in W_0^{s,p}(\Omega),
\]
which is coercive and weakly lower semi-continuous. Therefore, there is a global minimizer $v_\lambda\in W_0^{s,p}(\Omega)$ for $E_{\lambda}$. By $p>q>1$, we obtain $E_\lambda(v_\lambda)<0$ hence $v_\lambda\neq 0$. By maximum principle, we have $v_\lambda> 0$ in $\Omega$. We claim that $v_\lambda$ is the unique positive solution to $(Q_\lambda)$. Let $w$ be another positive solution, using Proposition \ref{2209252041}, $v_\lambda$, $w \in L^{\infty}(\Omega)$, then by Proposition {\ref{Comp_Le}}, there are two positive constants $C$, $C'$ such that
\begin{equation}\label{Unique_Le_Neq}
Cw(x) \le v_\lambda(x) \le C'w(x), \quad \mbox{for a.e. $x\in \Omega$}.
\end{equation}
Let $\beta_0 :=\sup\{\ell: v_\lambda\ge \ell w ~\text{a.e. in} ~\Omega \}$, by (\ref{Unique_Le_Neq}), we have $0<\beta_0 <\infty$. Clearly
\[
(-\Delta)_p^s v_\lambda=\lambda v_\lambda^{q-1}\ge \lambda \beta_0^{q-1} w^{q-1}=(-\Delta)_p^s \big(\beta_0^{\frac{q-1}{p-1}}w\big).
\]
By comparison principle, we get $v_\lambda\ge \beta_0^{\frac{q-1}{p-1}}w$, so that $\beta_0\ge\beta_0^{\frac{q-1}{p-1}}$, hence $\beta_0\ge 1$, which means $v_\lambda \geq w$ in $\Omega$. Similarly, $w \geq v_\lambda$, so we have $w = v_\lambda$. 

The uniqueness yields readily the expression of $v_\lambda$ by $v_1$,  since $\lambda^\frac{1}{p-q}v_1$ resolves $(Q_{\lambda})$.
\end{proof}
Due to the nonlinearity of $(-\Delta)_p^s$, we cannot derive the strong comparison principle directly. Here we will show the strict comparison between minimal solutions, thanks to the following inequality.
\begin{lemma}\label{2208242203}
Let $f_\lambda(t)=\lambda t^{q-1}+t^{p_s^*-1}$ for $\lambda>0$, $t> 0$, $q < p_s^*$.
Then for any $0<\lambda< \lambda'<\infty$ and $M > 0$, there exists $\bb_0>1$ such that $f_{\lambda}(\bb_0t)\le f_{\lambda'}(t)$ for $0< t\le M$.
\end{lemma}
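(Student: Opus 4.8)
The plan is to reduce the claimed inequality $f_\lambda(\beta_0 t)\le f_{\lambda'}(t)$ for $0<t\le M$ to a pointwise inequality in $t$ after dividing through by an appropriate power, and then exploit the gap $\lambda<\lambda'$ together with the compactness of $(0,M]$ (after a suitable rescaling). Writing out $f_\lambda(\beta_0 t)=\lambda\beta_0^{q-1}t^{q-1}+\beta_0^{p_s^*-1}t^{p_s^*-1}$, the desired inequality becomes
\[
\lambda\beta_0^{q-1}t^{q-1}+\beta_0^{p_s^*-1}t^{p_s^*-1}\le \lambda' t^{q-1}+t^{p_s^*-1},
\]
i.e.
\[
(\lambda'-\lambda\beta_0^{q-1})t^{q-1}\ge (\beta_0^{p_s^*-1}-1)t^{p_s^*-1}.
\]
Since $q<p_s^*$, dividing by $t^{q-1}>0$ this is equivalent to
\[
\lambda'-\lambda\beta_0^{q-1}\ge (\beta_0^{p_s^*-1}-1)\,t^{p_s^*-q}\qquad\text{for all }0<t\le M,
\]
and because the right-hand side is nondecreasing in $t$ (as $\beta_0>1$ makes $\beta_0^{p_s^*-1}-1>0$), it suffices to check it at $t=M$:
\[
\lambda'-\lambda\beta_0^{q-1}\ge (\beta_0^{p_s^*-1}-1)\,M^{p_s^*-q}.
\]

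Now the point is simply a continuity/limiting argument in the single parameter $\beta_0$. Define
\[
\Phi(\beta):=\lambda'-\lambda\beta^{q-1}-(\beta^{p_s^*-1}-1)\,M^{p_s^*-q},\qquad \beta\ge 1.
\]
Then $\Phi$ is continuous on $[1,\infty)$, and $\Phi(1)=\lambda'-\lambda>0$ by the hypothesis $\lambda<\lambda'$. Hence there exists $\beta_0>1$ with $\Phi(\beta_0)\ge 0$ (in fact $\Phi(\beta)>0$ for all $\beta$ in a right-neighborhood of $1$, so any such $\beta_0$ works). For this $\beta_0$ the displayed inequality at $t=M$ holds, hence — reversing the equivalences above — $f_\lambda(\beta_0 t)\le f_{\lambda'}(t)$ for every $t\in(0,M]$, which is the assertion.

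There is essentially no obstacle here: the only mild point requiring care is the direction of monotonicity in $t$, which relies on $\beta_0>1$ (so that $\beta_0^{p_s^*-1}-1>0$) and on $p_s^*>q$ (so that $t\mapsto t^{p_s^*-q}$ is increasing); both are available. One should also record that the same $\beta_0$ works for all $0<t\le M$ uniformly, which is exactly what the reduction to $t=M$ delivers. If a cleaner statement is preferred, one can even take $\beta_0$ to be the largest root of $\Phi$ in $[1,\infty)$, but for the application only the existence of some $\beta_0>1$ is needed, so the continuity argument from $\Phi(1)>0$ suffices.
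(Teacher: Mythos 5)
Your proof is correct, and it is cleaner and more direct than the paper's argument. The paper splits $(0,M]$ into two regimes: for small $t$ (say $t<t_0$), the $t^{q-1}$ term dominates the difference $f_{\lambda'}(t)-f_\lambda(\beta t)$, giving a $\beta_1>1$; on the compact interval $[t_0,M]$, uniform continuity and the strict gap $f_{\lambda'}>f_\lambda$ give a $\beta_2>1$; then $\beta_0=\min(\beta_1,\beta_2)$ works since $f_\lambda$ is nondecreasing. You instead observe that after dividing by $t^{q-1}>0$, the desired inequality is $\lambda'-\lambda\beta_0^{q-1}\ge(\beta_0^{p_s^*-1}-1)t^{p_s^*-q}$, and since (for any fixed $\beta_0>1$) the right-hand side is increasing in $t$ because $p_s^*>q$, the whole family of inequalities collapses to the single one at $t=M$, i.e. to $\Phi(\beta_0)\ge0$; continuity and $\Phi(1)=\lambda'-\lambda>0$ then finish it. Your reduction eliminates the two-region split entirely and isolates exactly where $p_s^*>q$ and $\lambda<\lambda'$ are used, which makes it tighter and easier to verify; it also does not rely on $q>1$ (only $q<p_s^*$), whereas the paper's choice $\beta\le\bigl(\tfrac{\lambda'+\lambda}{2\lambda}\bigr)^{1/(q-1)}$ implicitly uses $q>1$.
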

\begin{proof}
Let $1<\bb \le (\frac{\lambda'+\lambda}{2\lambda})^{\frac1{q-1}}$, then
\[
\begin{split}
f_{\lambda'}(t)-f_{\lambda}(\bb t)&=\Big(\frac{\lambda'+\lambda}{2}-\lambda \bb^{q-1}\Big)t^{q-1}+\frac{\lambda'-\lambda}{2}t^{q-1}+(1-\bb^{p_s^*-1})t^{p_s^*-1}\\
&\ge \frac{\lambda'-\lambda}{2}t^{q-1}+(1-\bb^{p_s^*-1})t^{p_s^*-1}.
\end{split}
\]
As $\lambda'>\lambda$ and $p_s^*>q$, there exist $t_0>0$, $\beta_1>1$ such that $f_{\lambda}(\bb_1t)\le f_{\lambda'}(t)$ for all $0<t<t_0$. On the other hand, there exists some $\bb_2 > 1$ but close enough such that $f_{\lambda}(\bb_2t)\le f_{\lambda'}(t)$ for all $t \in [t_0, M]$. $\bb_0 = \min(\bb_1, \bb_2)$ will satisfy the claim.
\end{proof}

Now, we are ready to prove the existence of minimal positive solution to $(P_\lambda)$.
\begin{proposition}\label{Minimal_Le}
For any $0<\lambda<\Lambda$, the problem $(P_\lambda)$ has a minimal positive solution $u_\lambda$ such that $u_\lambda<u_{\lambda'}$ if $0<\lambda<\lambda'<\Lambda$. Moreover, $\|u_\lambda\|_{\mathcal{C}_s^0(\overline{\Omega})}\to 0$ as $\lambda\to 0^+$.
\end{proposition}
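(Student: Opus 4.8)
The plan is to obtain the minimal solution $u_\lambda$ as the limit of a monotone iteration trapped between the unique solution $v_\lambda$ of $(Q_\lambda)$ (Lemma \ref{Unique_Le}), which serves as a universal lower barrier, and a well-chosen supersolution. \emph{Universal lower bound.} First I would show that \emph{every} positive solution $u$ of $(P_\lambda)$ satisfies $v_\lambda\le u$ a.e.\ in $\Omega$. Since $(-\Delta)_p^su=\lambda u^{q-1}+u^{p_s^*-1}\ge\lambda u^{q-1}$, $u$ is a supersolution of $(Q_\lambda)$; by Proposition \ref{2209252041} it lies in $L^\infty(\Omega)$, and by Proposition \ref{Comp_Le} one has $u\ge C\text{d}^s_{\Omega}\ge\varepsilon\varphi_1$ for $\varepsilon$ small (using $\varphi_1\le C'\text{d}^s_{\Omega}$, again by Proposition \ref{Comp_Le}). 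For small $\varepsilon$, $\varepsilon\varphi_1$ is a subsolution of $(Q_\lambda)$; iterating $(-\Delta)_p^sz_{j+1}=\lambda z_j^{q-1}$ from $z_0=\varepsilon\varphi_1$, the comparison principle (Lemma \ref{2307091452}) gives $\varepsilon\varphi_1\le z_j\le z_{j+1}\le u$, and, exactly as in the proof of Proposition \ref{Super_Sub_Le}, $\{z_j\}$ is bounded in $W_0^{s,p}(\Omega)$ and converges to a positive solution of $(Q_\lambda)$, which is $v_\lambda$ by Lemma \ref{Unique_Le}. The same iteration applied to a positive solution $w$ of $(P_\mu)$ with $\mu>\lambda$ (such $w$ exists because $\lambda<\Lambda=\sup\{\nu:(P_\nu)\text{ has a positive solution}\}$) yields $v_\lambda\le w$ as well, $w$ being likewise an $L^\infty$ supersolution of $(Q_\lambda)$ controlled from below by $\varepsilon\varphi_1$.

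\emph{Existence and minimality.} As $\mu>\lambda$ we have $f_\lambda\le f_\mu$ pointwise (notation of Lemma \ref{2208242203}), so $w$ is a supersolution of $(P_\lambda)$ lying above $v_\lambda$, and Proposition \ref{Super_Sub_Le} (with $\underline u=v_\lambda$, $\overline u=w$) produces a positive solution $u_\lambda$ of $(P_\lambda)$, obtained as the increasing limit of $w_0=v_\lambda$, $(-\Delta)_p^sw_{j+1}=f_\lambda(w_j)$. For any positive solution $u$ of $(P_\lambda)$: $w_0=v_\lambda\le u$ by the previous step, and if $w_j\le u$ then $(-\Delta)_p^sw_{j+1}=f_\lambda(w_j)\le f_\lambda(u)=(-\Delta)_p^su$, so $w_{j+1}\le u$ by Lemma \ref{2307091452}; letting $j\to\infty$, $u_\lambda\le u$. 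Hence $u_\lambda$ is the minimal positive solution (in particular it does not depend on $\mu,w$), and by Proposition \ref{Comp_Le} one has $C\text{d}^s_{\Omega}\le u_\lambda\le C'\text{d}^s_{\Omega}$ and $u_\lambda\in\mathcal{C}^0_s(\overline{\Omega})$.

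\emph{Monotonicity.} For $0<\lambda<\lambda'<\Lambda$, $u_{\lambda'}$ is a positive solution of $(P_{\lambda'})$, hence a supersolution of $(P_\lambda)$ above $v_\lambda$; rerunning the iteration of the previous paragraph with $u_{\lambda'}$ as upper barrier gives $u_\lambda\le u_{\lambda'}$. The main difficulty is to upgrade this to a \emph{strict} inequality, since neither $f_\lambda$ nor $(-\Delta)_p^s$ satisfies a strong comparison principle; the device is Lemma \ref{2208242203}. Put $M=|u_{\lambda'}|_\infty$, let $\beta_0>1$ be the constant of Lemma \ref{2208242203} for $\lambda<\lambda'$ and this $M$, and set $\beta_*=\sup\{\ell\ge1:\ell u_\lambda\le u_{\lambda'}\text{ a.e.}\}$, which is finite because $u_\lambda,u_{\lambda'}$ are both comparable to $\text{d}^s_{\Omega}$. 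Since $0<\beta_* u_\lambda\le u_{\lambda'}\le M$ a.e.\ in $\Omega$, using monotonicity of $f_{\lambda'}$, Lemma \ref{2208242203}, the elementary bound $f_\lambda(\theta t)\ge\theta^{q-1}f_\lambda(t)$ for $\theta\ge1$, and the $(p-1)$-homogeneity of $(-\Delta)_p^s$, we get
\[
(-\Delta)_p^su_{\lambda'}=f_{\lambda'}(u_{\lambda'})\ge f_{\lambda'}(\beta_* u_\lambda)\ge f_\lambda(\beta_0\beta_* u_\lambda)\ge(\beta_0\beta_*)^{q-1}f_\lambda(u_\lambda)=(-\Delta)_p^s\bigl((\beta_0\beta_*)^{\frac{q-1}{p-1}}u_\lambda\bigr).
\]
Lemma \ref{2307091452} then gives $u_{\lambda'}\ge(\beta_0\beta_*)^{\frac{q-1}{p-1}}u_\lambda$, and maximality of $\beta_*$ forces $(\beta_0\beta_*)^{\frac{q-1}{p-1}}\le\beta_*$, i.e.\ $\beta_*\ge\beta_0^{\frac{q-1}{p-q}}>1$; therefore $u_\lambda<\beta_* u_\lambda\le u_{\lambda'}$ in $\Omega$. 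The only care needed is to keep the whole argument inside the range $[0,M]$ on which Lemma \ref{2208242203} applies.

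\emph{Decay as $\lambda\to0^+$.} Fix $K>1$ so large that $K^{q-p}|e|_\infty^{q-1}<1$ and $v_1\le Ke$, which is possible since $v_1$ and $e$ (the solution of \eqref{Super_Eq}) are both comparable to $\text{d}^s_{\Omega}$. Using $1+(q-1)/(p-q)=(p-1)/(p-q)$, for $\lambda$ small the function $K\lambda^{1/(p-q)}e$ is a supersolution of $(P_\lambda)$, because
\[
\lambda\bigl(K\lambda^{\frac1{p-q}}e\bigr)^{q-1}+\bigl(K\lambda^{\frac1{p-q}}e\bigr)^{p_s^*-1}\le K^{p-1}\lambda^{\frac{p-1}{p-q}}\Bigl(K^{q-p}|e|_\infty^{q-1}+K^{p_s^*-p}\lambda^{\frac{p_s^*-p}{p-q}}|e|_\infty^{p_s^*-1}\Bigr)\le K^{p-1}\lambda^{\frac{p-1}{p-q}}=(-\Delta)_p^s\bigl(K\lambda^{\frac1{p-q}}e\bigr),
\]
the last step holding once $\lambda$ is small because $K^{q-p}|e|_\infty^{q-1}<1$ and the remaining term in the bracket tends to $0$. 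Since also $v_\lambda=\lambda^{1/(p-q)}v_1\le K\lambda^{1/(p-q)}e$, the iteration defining $u_\lambda$ stays below this supersolution, so $u_\lambda\le K\lambda^{1/(p-q)}e$ and hence $\|u_\lambda\|_{\mathcal{C}^0_s(\overline{\Omega})}\le K\lambda^{1/(p-q)}\|e\|_{\mathcal{C}^0_s(\overline{\Omega})}\to0$ as $\lambda\to0^+$.
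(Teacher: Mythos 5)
Your proof is correct and follows essentially the same strategy as the paper: using the unique solution $v_\lambda$ of $(Q_\lambda)$ as a universal lower barrier, monotone iteration to produce the minimal solution, Lemma \ref{2208242203} to upgrade $u_\lambda\le u_{\lambda'}$ to strict inequality, and a small supersolution multiple of $e$ for the decay. The only (harmless) deviations are that the paper gets strictness in one step via $u_{\lambda'}\ge\beta_0^{(q-1)/(p-1)}u_\lambda>u_\lambda$ without invoking the supremum $\beta_*$, and for the decay it simply reuses the supersolution $Me$ from Lemma \ref{2209270925} rather than exhibiting the explicit rate $K\lambda^{1/(p-q)}e$.
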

\begin{proof}
Let $w_{\lambda}$ be an arbitrary positive solution of $(P_{\lambda})$ with $\lambda \in (0, \Lambda)$. Let $v_\lambda$ be the unique solution of $(Q_{\lambda})$, we claim that $v_\lambda \leq w_\lambda$. Indeed, $w_{\lambda}$ is a supersolution to $(Q_\lambda)$. Using Propositions \ref{2209252041}--\ref{Comp_Le}, we see that for $\varepsilon > 0 $ small enough, there holds $\varepsilon \varphi_1 \leq w_\lambda$ and $\varepsilon \varphi_1$ is a subsolution to $(Q_{\lambda})$. As before, we get a solution of $(Q_\lambda)$ between $w_\lambda$ and $\varepsilon \varphi_1$, there holds $v_\lambda \le w_{{\lambda}}$ seeing Lemma \ref{Unique_Le}. 

Take any $\lambda \in (0, \Lambda)$. By the definition of $\Lambda$, there exists $\lambda' \in (\lambda, \Lambda)$ such that $(P_{\lambda'})$ has a positive solution, denoted by $w_{\lambda'}$. Applying Lemma \ref{Unique_Le} and the above argument, there holds $v_{\lambda}\le v_{\lambda'}\le w_{\lambda'}$. As $v_{\lambda}$ and $w_{\lambda'}$ are respectively sub and supersolution to $(P_\lambda)$, we can use the classical monotone iteration method or Proposition \ref{Super_Sub_Le}, starting with $v_\lambda$, to obtain a positive solution $u_\lambda$ for $(P_{\lambda})$. As $w_{\lambda'}$ can be replaced by arbitrary solution of $(P_\lambda)$ or $(P_{\lambda'})$, we conclude that $u_\lambda$ is the minimal positive solution to $(P_\lambda)$ and $u_\lambda\le u_{\lambda'}$.
%Set
%\[
%\beta=\sup\{l: u_{\Tilde{\lambda}}\ge lu_\lambda ~\text{a.e. in} ~\Omega \}.
%\]

More precisely, for $0 < \lambda< \lambda' < \Lambda$, using Lemma \ref{2208242203}, there exists $\bb_0>1$ such that
\[
\begin{split}
(-\Delta)_p^s u_{\lambda'} =f_{\lambda'}(u_{\lambda'}) \ge f_{\lambda'}(u_\lambda) \ge f_\lambda(\bb_0u_\lambda) 
= \lambda \bb_0^{q-1}u_\lambda^{q-1}+ \bb_0^{p_s^*-1}u_{\lambda}^{p_s^*-1}
\ge (-\Delta)_p^s (\beta_0^{\frac{q-1}{p-1}}u_\lambda).
\end{split}
\]
Applying again the comparison principle, we see that $u_{\lambda'}\ge \beta_0^{\frac{q-1}{p-1}} u_\lambda > u_\lambda$ in $\Omega$. 

Finally, coming back to the proof of Lemma \ref{2209270925}, for any $M > 0$ small enough, there exists $\lambda > 0$ small such that $Me$ is a supersolution of $(P_\lambda)$. As above, since $u_\lambda$ is the minimal solution, we get $u_\lambda\le Me$. So $\|u_\lambda\|_{\mathcal{C}_s^0(\overline{\Omega})}\to 0$ as $\lambda\to 0^+$.
\end{proof}

For any $\lambda\in (0,\Lambda)$, we can find $0<\lambda''<\lambda<\lambda'<\Lambda$. By Proposition \ref{Minimal_Le}, there are three minimal solutions corresponding to $\lambda''$, $\lambda$, $\lambda'$, denoted by $u_1, u_\lambda, u_2$ respectively. Let us define for every $x\in \Omega$,
\[
\widehat f_\lambda(x,t) =\left\{
\begin{array}{lll}
f_\lambda(u_2(x)),&\mbox{if }\; t\ge u_2(x);\\
f_\lambda(t),&\mbox{if }\; u_2(x)>t \geq 0.\\
%0,&\mbox{if }\;   t<0.
\end{array}
\right.
\]
Consider the following equation
\begin{equation}\label{2208242220}
\begin{cases}
\begin{aligned}
(-\Delta)_p^s u &= \widehat f_\lambda (x,u)&&\mbox{in }\;  \Omega,\\
u& \ge 0 && \mbox{in }\;  \Omega,\\
u& =0 &&\mbox{in }\;  \mathbb{R}^n\setminus\Omega,\\
\end{aligned}
\end{cases}
\end{equation}
with the associated energy functional
\[
\widehat{I}_\lambda (u)=\frac1p \lVert u \rVert^p-\int_{\Omega}{\widehat F}_\lambda (x,u)dx, \quad \mbox{where }\; {\widehat F}_\lambda (x,u)=\int_0^{u^+}\widehat f_\lambda (x,t)dt.
\]
One can prove that $\widehat{I}_\lambda$ is weakly lower continuous and coercive in $W_0^{s,p}(\Omega)$, so $\widehat{I}_\lambda$ can achieve its global minimum at some ${\widehat u}_\lambda\in W_0^{s,p}(\Omega)$ such that $0 < {\widehat u}_\lambda \le u_2$. We can find a subsolution $\varepsilon \varphi_1$ of $(P_{\lambda''})$ such that $\varepsilon \varphi_1\le {\widehat u}_\lambda$ and use super-subsolution method to conclude ${\widehat u}_\lambda\ge u_1$.

In virtue of Proposition \ref{Minimal_Le}, $u_1 <u_2$ in $\Omega$. Let $\Sigma$ be as in \eqref{2307061300}, i.e.
$\Sigma=\{u\in W_0^{1,p}(\Omega)\cap \mathcal{C}_{s}^0(\overline{\Omega}): u_1<u<u_2\}.$
We shall prove that ${\widehat u}_{\lambda}$ is a interior point of $\Sigma$ with respect to $ \mathcal{C}_{s}^0(\overline{\Omega})$-topology. 
\begin{lemma}\label{221229}
${\widehat u}_{\lambda}$ is a solution to $(P_\lambda)$, and ${\widehat u}_{\lambda}\in \Sigma^o$ is a local minimizer of $\widetilde{I}_\lambda$ given in \eqref{tildeI}. In other words, there exists $\sigma>0$ such that for any $h\in W_0^{s,p}(\Omega)\cap \mathcal{C}_{s}^{0}(\overline{\Omega})$ with $\lVert h \rVert_{\mathcal{C}_{s}^{0}(\overline{\Omega})}<\sigma$, 
\[
{\widehat u}_{\lambda}+h\in \Sigma \quad \mbox{and}\quad \widetilde{I}_\lambda({\widehat u}_{\lambda}+h)\ge \widetilde{I}_\lambda({\widehat u}_{\lambda}).
\]
Moreover, we have ${\widetilde I}_\lambda({\widehat u}_\lambda)<0$, 
\begin{equation}\label{2209110032}
\begin{split}
(p-1)\|\widehat{u}_{\lambda}\|^p-(q-1)\lambda|\widehat{u}_{\lambda}|_q^q-({p_s^*}-1)|\widehat{u}_{\lambda}|_{p_s^*}^{p_s^*}\ge 0,
\end{split}
\end{equation}
and there exists $C>0$ such that $\lVert \widehat{u}_\lambda \rVert<C$ for all $0<\lambda<\Lambda$.
\end{lemma}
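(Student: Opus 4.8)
Recall from the construction preceding the lemma that $\widehat u_\lambda$ is the global minimizer of $\widehat I_\lambda$ over $W_0^{s,p}(\Omega)$, that $0<\widehat u_\lambda$ with $u_1\le\widehat u_\lambda\le u_2$ (writing $u_1=u_{\lambda''}$, $u_2=u_{\lambda'}$), and that $\widehat F_\lambda(x,t)=\int_0^t f_\lambda(\tau)\,d\tau$ whenever $0\le t\le u_2(x)$, so $\widehat I_\lambda=\widetilde I_\lambda$ on $\{u\in W_0^{s,p}(\Omega):\,0\le u\le u_2\}$. First I would observe that $\widehat u_\lambda$ actually solves $(P_\lambda)$: since $\widehat u_\lambda\le u_2$, the definition of $\widehat f_\lambda$ gives $\widehat f_\lambda(x,\widehat u_\lambda)=f_\lambda(\widehat u_\lambda)$ pointwise, so the Euler equation $(-\Delta)_p^s\widehat u_\lambda=\widehat f_\lambda(x,\widehat u_\lambda)$ of the minimizer becomes $(-\Delta)_p^s\widehat u_\lambda=f_\lambda(\widehat u_\lambda)$, which with $\widehat u_\lambda>0$ is precisely $(P_\lambda)$. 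Testing $\widehat I_\lambda$ at $\varepsilon\varphi_1$ (legitimate since $\varepsilon\varphi_1\le u_2$ for small $\varepsilon>0$, both being comparable to $\text{d}_\Omega^s$ by Proposition \ref{Comp_Le}) and using $q<p<p_s^*$ yields $\widehat I_\lambda(\widehat u_\lambda)\le\widehat I_\lambda(\varepsilon\varphi_1)=\widetilde I_\lambda(\varepsilon\varphi_1)<0$, hence $\widetilde I_\lambda(\widehat u_\lambda)=\widehat I_\lambda(\widehat u_\lambda)<0$. Finally, by Proposition \ref{2209252041} we have $u_1,\widehat u_\lambda,u_2\in L^\infty(\Omega)$, so by Proposition \ref{Comp_Le} each of them is bounded above and below by a constant multiple of $\text{d}_\Omega^s$; this will be used repeatedly.

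\textbf{The interior-point claim} is the core. The plan is to upgrade the ordering $u_1\le\widehat u_\lambda\le u_2$ to a strict comparison quantified by the boundary weight. Applying Lemma \ref{2208242203} with $\lambda''<\lambda$ and $M=|u_1|_\infty$ gives $\beta_1>1$ with $f_{\lambda''}(\beta_1 t)\le f_\lambda(t)$ for $0<t\le M$; then, using $\widehat u_\lambda\ge u_1$ and the elementary inequality $f_\lambda(\beta_1 t)\ge\beta_1^{q-1}f_\lambda(t)$ (valid because $p_s^*-1>q-1>0$),
\[
(-\Delta)_p^s\widehat u_\lambda=f_\lambda(\widehat u_\lambda)\ge f_\lambda(u_1)\ge f_{\lambda''}(\beta_1 u_1)\ge\beta_1^{q-1}f_{\lambda''}(u_1)=(-\Delta)_p^s\big(\beta_1^{\frac{q-1}{p-1}}u_1\big),
\]
so Lemma \ref{2307091452} forces $\widehat u_\lambda\ge\beta_1^{\frac{q-1}{p-1}}u_1$, whence $\widehat u_\lambda-u_1\ge\big(\beta_1^{\frac{q-1}{p-1}}-1\big)u_1\ge c_1\,\text{d}_\Omega^s$ for some $c_1>0$. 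Symmetrically, Lemma \ref{2208242203} with $\lambda<\lambda'$ and $M=|u_2|_\infty$ gives $\beta_2>1$ with $f_\lambda(\beta_2 t)\le f_{\lambda'}(t)$ for $0<t\le M$, and, using $\widehat u_\lambda\le u_2$,
\[
(-\Delta)_p^s u_2=f_{\lambda'}(u_2)\ge f_\lambda(\beta_2 u_2)\ge f_\lambda(\beta_2\widehat u_\lambda)\ge\beta_2^{q-1}f_\lambda(\widehat u_\lambda)=(-\Delta)_p^s\big(\beta_2^{\frac{q-1}{p-1}}\widehat u_\lambda\big),
\]
so $u_2-\widehat u_\lambda\ge\big(\beta_2^{\frac{q-1}{p-1}}-1\big)\widehat u_\lambda\ge c_2\,\text{d}_\Omega^s$ for some $c_2>0$. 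Set $\sigma:=\min\{c_1,c_2\}$: for $h\in W_0^{s,p}(\Omega)\cap\mathcal{C}_s^0(\overline\Omega)$ with $\|h\|_{\mathcal{C}_s^0(\overline\Omega)}<\sigma$ one has $|h|\le\sigma\,\text{d}_\Omega^s$ a.e., hence $u_1<\widehat u_\lambda+h<u_2$, i.e.\ $\widehat u_\lambda+h\in\Sigma$; and since then $0\le\widehat u_\lambda+h\le u_2$, global minimality of $\widehat u_\lambda$ for $\widehat I_\lambda$ yields $\widetilde I_\lambda(\widehat u_\lambda+h)=\widehat I_\lambda(\widehat u_\lambda+h)\ge\widehat I_\lambda(\widehat u_\lambda)=\widetilde I_\lambda(\widehat u_\lambda)$. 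This gives both $\widehat u_\lambda\in\Sigma^o$ and the local minimality of $\widetilde I_\lambda$ at $\widehat u_\lambda$ in the $\mathcal{C}_s^0(\overline\Omega)$-topology.

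\textbf{The second-order inequality \eqref{2209110032}} I would obtain by restricting $\widehat I_\lambda$ to the ray $t\mapsto(1+t)\widehat u_\lambda$. Since $\widehat u_\lambda\le C\,\text{d}_\Omega^s$ while $u_2-\widehat u_\lambda\ge c_2\,\text{d}_\Omega^s$, for $|t|$ small one has $0<(1+t)\widehat u_\lambda<u_2$, so near $t=0$ the function
\[
\psi(t):=\widehat I_\lambda\big((1+t)\widehat u_\lambda\big)=\widetilde I_\lambda\big((1+t)\widehat u_\lambda\big)=\frac{(1+t)^p}{p}\|\widehat u_\lambda\|^p-\frac{\lambda(1+t)^q}{q}|\widehat u_\lambda|_q^q-\frac{(1+t)^{p_s^*}}{p_s^*}|\widehat u_\lambda|_{p_s^*}^{p_s^*}
\]
is of class $C^2$ and, by global minimality of $\widehat u_\lambda$, attains a local minimum at $t=0$; hence $\psi''(0)\ge 0$, which is exactly \eqref{2209110032}. (The condition $\psi'(0)=0$ merely reproduces $\|\widehat u_\lambda\|^p=\lambda|\widehat u_\lambda|_q^q+|\widehat u_\lambda|_{p_s^*}^{p_s^*}$, obtained by testing $(P_\lambda)$ against $\widehat u_\lambda$.)

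\textbf{The uniform bound and the main obstacle.} With $a=\|\widehat u_\lambda\|^p$, $b=\lambda|\widehat u_\lambda|_q^q$, $c=|\widehat u_\lambda|_{p_s^*}^{p_s^*}$, the identity $a=b+c$ (from testing $(P_\lambda)$ against $\widehat u_\lambda$) together with $\widetilde I_\lambda(\widehat u_\lambda)<0$, i.e.\ $\frac{a}{p}<\frac{b}{q}+\frac{c}{p_s^*}$, gives, after eliminating $c=a-b$ and using $q<p<p_s^*$, the estimate $a\le\frac{p(p_s^*-q)}{q(p_s^*-p)}\,b$, whose constant is independent of $\lambda$. Since Hölder's inequality and the Sobolev inequality for $S_{s,p}$ give $b\le\lambda\,|\Omega|^{1-q/p_s^*}S_{s,p}^{-q/p}\|\widehat u_\lambda\|^q$, it follows that $\|\widehat u_\lambda\|^{p-q}\le\frac{p(p_s^*-q)}{q(p_s^*-p)}\,\Lambda\,|\Omega|^{1-q/p_s^*}S_{s,p}^{-q/p}$, uniform in $\lambda\in(0,\Lambda)$. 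I expect the delicate step to be the quantitative strict comparison with the weight $\text{d}_\Omega^s$ in the interior-point claim: no strong comparison principle for $(-\Delta)_p^s$ is available, so the strict separations $\widehat u_\lambda-u_1\ge c_1\,\text{d}_\Omega^s$ and $u_2-\widehat u_\lambda\ge c_2\,\text{d}_\Omega^s$ have to be extracted by pairing the weak comparison principle (Lemma \ref{2307091452}) with the two-sided boundary estimate of Proposition \ref{Comp_Le} and the multiplicative gap supplied by Lemma \ref{2208242203}; everything else is then the one-dimensional convexity analysis of $\psi$ and routine bookkeeping with the Sobolev embedding.
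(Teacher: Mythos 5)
Your proof is correct and follows essentially the same route as the paper's: the strict separation $u_1 < \widehat u_\lambda < u_2$ by a constant multiple of $\text{d}_\Omega^s$ is obtained via Lemma \ref{2208242203}, the weak comparison principle (Lemma \ref{2307091452}) and the two-sided bound of Proposition \ref{Comp_Le}; the local minimality of $\widetilde I_\lambda$ then follows because $\widehat I_\lambda = \widetilde I_\lambda$ on $\{0\le u\le u_2\}$; the second-order inequality is the nonnegativity of the second derivative along the ray through $\widehat u_\lambda$; and the uniform bound uses the Euler identity combined with a one-variable algebraic inequality and the Sobolev embedding. The only real divergence from the paper is in establishing $\widetilde I_\lambda(\widehat u_\lambda)<0$: you test the global minimality of $\widehat I_\lambda$ against $\varepsilon\varphi_1$ and use $q<p$ (direct and clean), whereas the paper derives it a posteriori by combining the Euler identity $\|\widehat u_\lambda\|^p=\lambda|\widehat u_\lambda|_q^q+|\widehat u_\lambda|_{p_s^*}^{p_s^*}$ with \eqref{2209110032} and the inequality $p>q$; correspondingly you then reuse the negativity (rather than \eqref{2209110032}) to extract the uniform $W_0^{s,p}$-bound, which is an equally valid bookkeeping choice. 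Both variants yield the same constant-free conclusion, so the overall argument is faithful to the paper.
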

\begin{proof}
Using Lemma \ref{2208242203}, there exists $\beta >1$ such that
\[
\begin{split}
(-\Delta)_p^s u_2&= f_{\lambda'}(u_2)\ge  f_{\lambda'}({\widehat u}_\lambda) \ge   f_{\lambda}(\beta{\widehat u}_\lambda) = {\lambda} \beta^{q-1}{\widehat u}_{\lambda}^{q-1}+ \beta^{p_s^*-1}{\widehat u}_{\lambda}^{p_s^*-1}\ge (-\Delta)_p^s (\beta^{\frac{q-1}{p-1}}\widehat{u}_\lambda),
\end{split}
\]
which together with the comparison principle implies
\begin{equation}\label{2208251302}
u_2\ge \gamma_2\widehat{u}_\lambda, \quad\text{where }~ \gamma_2:=\beta^{\frac{q-1}{p-1}}>1 .
\end{equation}
Similarly, there exists $\gamma_1>1$ such that
\begin{equation}\label{2208251303}
{\widehat u}_\lambda\ge \gamma_1 u_1.
\end{equation}
Moreover, by Proposition \ref{Comp_Le}, there exists $\beta'>0$ such that
\begin{equation}\label{2307091828}
u_1(x) \geq \beta' \text{d}^s_{\Omega}(x).
\end{equation}
Choose now $\sigma:=\left[\min(\gamma_1, \gamma_2) -1\right]\beta'$. Let $h\in W_0^{s,p}(\Omega)\cap \mathcal{C}_{s}^{0}(\overline{\Omega})$ satisfy $\lVert h \rVert_{\mathcal{C}_{s}^{0}(\overline{\Omega})}<\sigma$, 
then by \eqref{2208251302}--\eqref{2307091828}, there holds
\[
{\widehat u}_\lambda+h >{\widehat u}_\lambda-(\gamma_1-1)\beta'{\text{d}^s_{\Omega}(x)}>{\widehat u}_\lambda-(\gamma_1-1)u_1\ge u_1
\]
and similarly ${\widehat u}_\lambda + h < u_2$, so ${\widehat u}_{\lambda}+h\in \Sigma$. Furthermore,
\[
\begin{aligned}
{\widetilde I}_\lambda ({\widehat u}_\lambda+h)&=\widehat{I}_\lambda ({\widehat u}_\lambda+h)+\int_{\Omega}{\widehat F}_\lambda (x, {\widehat u}_\lambda+h)dx-\frac{\lambda}{q}|{\widehat u}_\lambda+h|_q^q-\frac{1}{p_s^*}|{\widehat u}_\lambda+h|_{p_s^*}^{p_s^*} \\
&\ge \widehat{I}_\lambda (\widehat{u}_\lambda)+\int_{\Omega}\int_0^{{\widehat u}_\lambda+h}\widehat f_\lambda(x, t)dt dx-\frac{\lambda}{q}|{\widehat u}_\lambda+h|_q^q-\frac{1}{p_s^*}|{\widehat u}_\lambda+h|_{p_s^*}^{p_s^*} \\
&={\widetilde I}_\lambda (\widehat{u}_\lambda).
\end{aligned}
\]
The last equality comes from the definition of $\widehat f_\lambda$ and ${\widehat u}_{\lambda}+h\in \Sigma$, so ${\widehat u}_{\lambda}$ is a local minimizer for $\widetilde I_\lambda$ with respect to $C_s^0(\overline\Omega)$ topology. Consider $g(t) = {\widetilde I}_\lambda(t{\widehat u}_\lambda)$, $g''(1)\ge 0$ hence \eqref{2209110032} holds. 

On the other hand, $C_c^{\infty}(\Omega)$ is dense in $W_0^{s,p}(\Omega)$ (see \cite[Theorem 2.6]{MRS2016}), so dose $W_0^{s,p}(\Omega)\cap C_s^0(\overline\Omega)$. Then $\widehat{u}_\lambda$ solves $(P_\lambda)$, hence we have
\begin{equation}\label{2209110035}
\|\widehat{u}_{\lambda}\|^p-\lambda|\widehat{u}_{\lambda}|_q^q-|\widehat{u}_{\lambda}|_{p_s^*}^{p_s^*}=0.
\end{equation}
Combining \eqref{2209110032} with \eqref{2209110035}, we get $(p_s^* - q)\lambda|\widehat{u}_{\lambda}|_q^q\geq (p^*_s - p)\|\widehat{u}_{\lambda}\|^p$. Using again \eqref{2209110035},
\begin{align*}
\widetilde I_{\lambda}(\widehat{u}_{\lambda})= \left(\frac{1}{p} - \frac{1}{p_s^*}\right)\|\widehat{u}_{\lambda}\|^p  - \left(\frac{1}{q} - \frac{1}{p_s^*}\right)\lambda|\widehat{u}_{\lambda}|_q^q < 0.\end{align*}
Moreover, for $0<\lambda<\Lambda$,
\[
\|\widehat{u}_{\lambda}\|^p\le C_1 |\widehat{u}_{\lambda}|_q^q\le C_2\|\widehat{u}_{\lambda}\|^q,
\]
which deduces that $\|\widehat{u}_{\lambda}\|$ is uniformly bounded.
\end{proof}

Now we are in position to prove the existence of positive solution for $(P_\Lambda)$. 

\begin{lemma}\label{2209270928}
Let $s\in (0,1)$, $p>1$, $n>sp$. For $\lambda=\Lambda$, there is a positive solution to $(P_\Lambda)$.
\end{lemma}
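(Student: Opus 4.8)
The plan is to pass to the limit $\lambda \to \Lambda^-$ along the family of minimal solutions $\{u_\lambda\}$ constructed in Proposition \ref{Minimal_Le}. Recall from Proposition \ref{Minimal_Le} that $\{u_\lambda\}_{0<\lambda<\Lambda}$ is increasing in $\lambda$, and that (since $u_\lambda = \widehat u_\lambda$ by the discussion around Lemma \ref{221229}) the bound $\|u_\lambda\| < C$ holds uniformly for all $0 < \lambda < \Lambda$ by Lemma \ref{221229}. Thus, first I would fix an increasing sequence $\lambda_k \nearrow \Lambda$, set $u_k := u_{\lambda_k}$, and use the uniform $W_0^{s,p}(\Omega)$-bound together with reflexivity to extract a subsequence with $u_k \rightharpoonup u_\Lambda$ weakly in $W_0^{s,p}(\Omega)$. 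By the compact embedding $W_0^{s,p}(\Omega) \hookrightarrow L^r(\Omega)$ for $r < p_s^*$ we get $u_k \to u_\Lambda$ strongly in $L^q(\Omega)$, and (up to a further subsequence) $u_k \to u_\Lambda$ a.e. in $\mathbb{R}^n$; monotonicity in fact makes the a.e.\ pointwise limit genuine (not just along a subsequence). Since each $u_k \ge u_{\lambda_1} > 0$ on $\Omega$, we have $u_\Lambda \ge u_{\lambda_1} > 0$, so the limit is nontrivial.

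The core step is to show $u_\Lambda$ is a weak solution of $(P_\Lambda)$. Here I would use the standard argument for monotone operators: each $u_k$ satisfies $\langle (-\Delta)_p^s u_k, v\rangle = \int_\Omega f_{\lambda_k}(u_k) v\,dx$ for all $v \in W_0^{s,p}(\Omega)$. The right-hand side converges for each fixed $v$: the term $\lambda_k \int_\Omega u_k^{q-1} v$ converges by $L^q$-strong convergence and $\lambda_k \to \Lambda$, while for the critical term one needs that $u_k^{p_s^*-1}$ converges weakly in $L^{(p_s^*)'}(\Omega)$ to $u_\Lambda^{p_s^*-1}$ — this follows from the uniform $L^{p_s^*}$-bound (hence uniform $L^{(p_s^*)'}$-bound on $u_k^{p_s^*-1}$) plus a.e.\ convergence, via the standard lemma that an a.e.-convergent bounded sequence in $L^r$ converges weakly. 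Then $\langle (-\Delta)_p^s u_k, v\rangle \to \int_\Omega f_\Lambda(u_\Lambda) v$ for all $v$; testing with $v = u_k - u_\Lambda$ and using that this quantity tends to $0$, one applies the $(S_+)$-property of $(-\Delta)_p^s$ on $W_0^{s,p}(\Omega)$ (weak convergence plus $\limsup \langle (-\Delta)_p^s u_k, u_k - u_\Lambda\rangle \le 0$ forces strong convergence $u_k \to u_\Lambda$) to upgrade the weak convergence to strong, which lets us pass to the limit in $\langle (-\Delta)_p^s u_k, v\rangle \to \langle (-\Delta)_p^s u_\Lambda, v\rangle$. Hence $u_\Lambda$ solves $(P_\Lambda)$ weakly; by Lemma \ref{22080821} it is positive a.e.\ in $\Omega$.

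I would also record that $u_\Lambda$ is the pointwise limit of $u_\lambda$ as $\lambda \to \Lambda^-$, as claimed in Theorem \ref{thm1.1}(iii): by monotonicity, $u_\lambda(x)$ increases to some $\bar u(x) \le u_\Lambda(x)$ a.e., and the sequential argument above identifies $\bar u = u_\Lambda$ regardless of the chosen sequence, since any two increasing sequences converging to $\Lambda$ interlace.

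The main obstacle I expect is the passage to the limit in the critical nonlinear term together with justifying strong convergence: a priori one only has weak $W_0^{s,p}$-convergence and, because of the lack of compactness of $W_0^{s,p}(\Omega) \hookrightarrow L^{p_s^*}(\Omega)$, there could be concentration in $|u_k|^{p_s^*}$. The resolution is precisely that we do \emph{not} need strong $L^{p_s^*}$-convergence: weak $L^{(p_s^*)'}$-convergence of $u_k^{p_s^*-1}$ (from boundedness plus a.e.\ convergence) suffices to pass to the limit in the weak formulation for each fixed test function $v$, and then the $(S_+)$-property of the fractional $p$-Laplacian does the rest, yielding strong convergence $u_k\to u_\Lambda$ in $W_0^{s,p}(\Omega)$ \emph{a posteriori}. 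One should also double-check that $f_\Lambda(u_\Lambda) \in W_0^{s,p}(\Omega)^*$ so that ``weak solution'' is well-defined — this is immediate from $u_\Lambda \in W_0^{s,p}(\Omega) \subset L^{p_s^*}(\Omega)$ and $q < p_s^*$.
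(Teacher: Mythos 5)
Your overall strategy is the same as the paper's: extract the limit of the monotone family $u_\lambda$ as $\lambda\to\Lambda^-$ using a uniform $W_0^{s,p}(\Omega)$-bound. But there is a genuine gap in how you obtain that bound. You write ``since $u_\lambda=\widehat u_\lambda$ by the discussion around Lemma \ref{221229}'' — this is not established anywhere in the paper. The equality $u_\lambda=\widehat u_\lambda$ is a \emph{standing assumption} made only at the start of Section \ref{2306231255}, in a dichotomy for the second-solution argument (``If $u_\lambda\neq\widehat u_\lambda$, we get already two positive solutions''), and it plays no role in Section \ref{2306231254}. The paper's actual argument is via minimality: $u_\lambda$ is the minimal positive solution and $\widehat u_\lambda$ is a positive solution, so $0<u_\lambda\le\widehat u_\lambda$. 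Testing each against itself gives $\|u_\lambda\|^p=\lambda|u_\lambda|_q^q+|u_\lambda|_{p_s^*}^{p_s^*}$ and $\|\widehat u_\lambda\|^p=\lambda|\widehat u_\lambda|_q^q+|\widehat u_\lambda|_{p_s^*}^{p_s^*}$, and then the pointwise inequality yields
\[
\|u_\lambda\|^p=\lambda|u_\lambda|_q^q+|u_\lambda|_{p_s^*}^{p_s^*}\le\lambda|\widehat u_\lambda|_q^q+|\widehat u_\lambda|_{p_s^*}^{p_s^*}=\|\widehat u_\lambda\|^p\le C^p.
\]
With this correction the rest of your argument stands, but you should replace the false equality by this minimality argument.

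A secondary, more minor remark: your $(S_+)$ step is not fully justified as written. You need $\limsup_k\langle(-\Delta)_p^s u_k,u_k-u_\Lambda\rangle\le 0$, and for the critical term this amounts to $\int_\Omega u_k^{p_s^*}\,dx\to\int_\Omega u_\Lambda^{p_s^*}\,dx$, which you do not prove; it does hold here, but only because $u_k\nearrow u_\Lambda$ pointwise and $u_\Lambda\in L^{p_s^*}(\Omega)$, so monotone convergence applies — worth stating explicitly. That said, the whole $(S_+)$ detour is unnecessary: as in Step 2 of the proof of Proposition \ref{2209210928}, once $u_k\rightharpoonup u_\Lambda$ weakly in $W_0^{s,p}(\Omega)$ and $u_k\to u_\Lambda$ a.e., the functions $|x-y|^{-(n+sp)/p'}J_{u_k}(x,y)$ converge weakly in $L^{p'}(\mathbb{R}^{2n})$ to $|x-y|^{-(n+sp)/p'}J_{u_\Lambda}(x,y)$, so $\langle(-\Delta)_p^s u_k,v\rangle\to\langle(-\Delta)_p^s u_\Lambda,v\rangle$ for every fixed $v$. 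Combined with your (correct) treatment of the right-hand side, this gives that $u_\Lambda$ is a weak solution directly, without first upgrading to strong convergence.
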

\begin{proof}
Let $\widehat{u}_\lambda$ be given in Lemma \ref{221229}, there exists $C>0$ such that $\|\widehat{u}_\lambda\|\le C$. Notice that $\widehat{u}_{\lambda}$ is also a critical point of $\widetilde{I}_\lambda$, hence a positive solution to $(P_\lambda)$. Let $u_\lambda$ be the minimal positive solution given in Lemma \ref{Minimal_Le}. So
\[
\|u_\lambda\|^p= \lambda|u_\lambda|_q^q+|u_\lambda|_{p_s^*}^{p_s^*} \le \lambda|\widehat{u}_{\lambda}|_q^q+|\widehat{u}_{\lambda}|_{p_s^*}^{p_s^*} =\|\widehat{u}_{\lambda}\|^p\le C^p.
\]
As $u_\lambda$ is increasing with respect to $\lambda$, then there exists $u_{\Lambda}\in W_0^{s,p}(\Omega)$ such that $u_\lambda$ weakly converges to $u_{\Lambda}$ and $u_\lambda(x)\to u_\Lambda(x)$ a.e. in $\Omega$. Consequently, $u_\Lambda$ is a nontrivial positive solution to $(P_\Lambda)$.
\end{proof}

Clearly, Lemma \ref{2209270925}, Lemma \ref{Minimal_Le} and Lemma \ref{2209270928} complete the proof of Theorem \ref{thm1.1}. 

\section{Proof of Theorem \ref{2209252009}}\label{2306231255}
Now, we consider the existence of second positive solution for $(P_\lambda)$. For convenience and without loss of generality, {\bf we assume $0\in \Omega$}. For $0<\lambda<\Lambda$, let $u_\lambda$ be the minimal solution given in Proposition \ref{Minimal_Le}, and ${\widehat u}_{\lambda}$ be given in Lemma \ref{221229}. If $u_{\lambda}\ne {\widehat u}_{\lambda}$, we get already two positive solutions of $(P_\lambda)$. Therefore, in this section, we always assume $$u_{\lambda}={\widehat u}_{\lambda}.$$
When $p\ge 2$, from Lemma \ref{2208251418} and Lemma \ref{221229}, it follows that $u_\lambda$ is a local minimizer of ${\widetilde I}_\lambda$ in $W_0^{s,p}(\Omega)$, that is, there exists $\rho>0$ such that
\begin{equation}\label{2209211647}
{\widetilde I}_{\lambda}(u)\ge {\widetilde I}_{\lambda}(u_{\lambda})\quad\text{for}~\text{any}~ \|u-u_{\lambda}\|\le \rho.
\end{equation}
In order to find a second positive solution, we will show a mountain pass geometry for ${\widetilde I}_\lambda$ around $u_\lambda$, we will choose the mountain pass paths from $u_\lambda$ to a terminal point $e$ such that $\|e\|>\rho$ and ${\widetilde I}_\lambda(e)<{\widetilde I}_{\lambda}(u_\lambda)$. We denote the set of mountain pass paths by $\Gamma_{\varepsilon,\delta}$, and the mountain pass level by $m_{\varepsilon,\delta}$ as in \eqref{2307142204} and \eqref{2209150938} respectively.  We will use the path $\gamma_{\varepsilon,\delta}$ given in \eqref{2301091202}. The following three claims will be checked:
 \begin{itemize}
\item[1.]  $\gamma_{\varepsilon,\delta}\in C([0,1], W_0^{s,p}(\Omega))$;
\item[2.] There exist $\delta\ge 2\varepsilon>0$ such that the maximum of ${\widetilde I}_{\lambda}$ along the path $\gamma_{\varepsilon,\delta}$ is strictly less than $c_{s, p}$ with $c_{s, p}$ given in \eqref{2307082228};
\item[3.] ${\widetilde I}_{\lambda}$ satisfies the Palais-Smale condition for any level $c<c_{s, p}$. 
\end{itemize}

%a ray path starting from $u_\lambda$ and estimates the maximum along this path. However, the approach we use here is to find a point $\eta_{\delta}u_\lambda$, which is close to $u_\lambda$ and then find a ray path starting from $\eta_{\delta}u_\lambda$ to $\eta_{\delta}u_\lambda+Ru_{\varepsilon, \delta}$ with $R$ big and estimate the maximum along this path. Here $\eta_{\delta}$ is given in \eqref{2307082048} and $u_{\varepsilon, \delta}$ is given in \eqref{2209151133}.
Let $\eta_\delta$ be given in \eqref{2307082048}. We can get the following lemma.
\begin{lemma}\label{2210121045}
Let $B_{5\theta \delta}(0)\subset \Omega$ where $\theta$ is given in Lemma \ref{2209051345}. If $u\in W_0^{s,p}(\Omega)\cap L^{\infty}(\Omega)$, then $\eta_{\delta}u \to u$ in $W_0^{s,p}(\Omega)$ as $\delta\to 0$.
\end{lemma}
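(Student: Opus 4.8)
The plan is to show that $\eta_\delta u \to u$ in $W_0^{s,p}(\Omega)$ by writing $\|\eta_\delta u - u\|^p = [\eta_\delta u - u]_{s,p}^p$ and splitting the double Gagliardo integral over $\mathbb{R}^{2n}$ according to whether the points $x,y$ lie inside or outside a small ball around the origin. Set $w_\delta := (\eta_\delta - 1)u = (\eta_\delta(x)-1)u(x)$; note $w_\delta$ is supported in $\overline{B_{3\theta\delta}(0)}$ since $\eta_\delta \equiv 1$ outside $B_{3\theta\delta}(0)$, and $|w_\delta| \le |u|_\infty$ pointwise with $|w_\delta(x)| \le |u(x)|$ so that in particular $w_\delta \to 0$ a.e. and $|w_\delta| \le |u| \in L^p(\Omega)$ (more precisely $w_\delta \to 0$ in $L^p(\Omega)$ by dominated convergence). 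The goal is to bound
\[
[\eta_\delta u - u]_{s,p}^p = \int_{\mathbb{R}^{2n}} \frac{|w_\delta(x) - w_\delta(y)|^p}{|x-y|^{n+sp}}\,dx\,dy \to 0 \quad \text{as } \delta \to 0.
\]

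**The key decomposition.** Fix a large fixed ball $B_R(0) \supset \Omega$ and write $\mathbb{R}^{2n} = (B_R\times B_R) \cup (B_R^c \times \mathbb{R}^n) \cup (\mathbb{R}^n \times B_R^c)$, using that $w_\delta$ vanishes outside $B_R$ (for $\delta$ small). On the ``far'' pieces, where at least one point is outside $B_R$, I would use that for $x \notin B_R$ the inner integral $\int_{B_{3\theta\delta}} |w_\delta(y)|^p |x-y|^{-n-sp}\,dy \le C |w_\delta|_p^p \cdot \operatorname{dist}(x, B_{3\theta\delta})^{-n-sp}$, and integrating in $x$ over $B_R^c$ gives a bound of the form $C(R) |w_\delta|_p^p \to 0$. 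For the ``near'' piece $B_R \times B_R$: here I would use that $u \in W_0^{s,p}(\Omega)$, so $u \in W^{s,p}(B_R)$, together with the fact that $\eta_\delta$ is smooth with $\|\nabla \eta_\delta\|_\infty \le C/\delta$. The standard commutator estimate for multiplication by a cutoff gives
\[
[\eta_\delta u - u]_{W^{s,p}(B_R)}^p \le C\Big( \int_{B_R} |u|^p |\eta_\delta - 1|^p \,\frac{dx\,dy}{\cdots} \Big) + C\big[(\eta_\delta-1)\big]\text{-type terms},
\]
i.e. one splits $w_\delta(x) - w_\delta(y) = (\eta_\delta(x)-1)(u(x)-u(y)) + u(y)(\eta_\delta(x)-\eta_\delta(y))$ and applies $|a+b|^p \le 2^{p-1}(|a|^p+|b|^p)$. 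The first resulting term is $\int\int |\eta_\delta(x)-1|^p \frac{|u(x)-u(y)|^p}{|x-y|^{n+sp}}$, which tends to $0$ by dominated convergence since $|\eta_\delta - 1|^p \le 1$, $\eta_\delta \to 1$ pointwise a.e. (everywhere except the origin), and $\frac{|u(x)-u(y)|^p}{|x-y|^{n+sp}} \in L^1(\mathbb{R}^{2n})$. The second term is $\int\int |u(y)|^p \frac{|\eta_\delta(x)-\eta_\delta(y)|^p}{|x-y|^{n+sp}}\,dx\,dy$, restricted to $x$ or $y$ in $B_{3\theta\delta}(0)$; using $|\eta_\delta(x)-\eta_\delta(y)| \le \min\{1, C|x-y|/\delta\}$ and $|u|_\infty$ on the small ball, one computes $\int_{\mathbb{R}^n} \frac{|\eta_\delta(x)-\eta_\delta(y)|^p}{|x-y|^{n+sp}}\,dx \le C\delta^{-sp}$ uniformly in $y$, so this term is bounded by $C |u|_\infty^p \delta^{-sp} \cdot |B_{3\theta\delta}| = C|u|_\infty^p \delta^{n-sp} \to 0$ because $n > sp$.

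**The main obstacle** is organizing the near-diagonal estimate cleanly so that the factor $\delta^{n-sp}$ genuinely appears with a positive exponent; this is exactly where the hypothesis $n > sp$ enters and where one must be careful that the cutoff gradient blows up like $1/\delta$ but is compensated by the shrinking measure of its support. An alternative, shorter route — which I would actually prefer to present — is to invoke Lemma \ref{2209151022} directly: if one additionally knew $(-\Delta)_p^s u \in L^\infty$ one would get $\|\eta_\delta u\|^p \le \|u\|^p + C\delta^{n-sp}$, but since here $u$ is only in $W_0^{s,p}\cap L^\infty$, the honest argument is the commutator splitting above. I would present it as: (1) reduce to showing $[w_\delta]_{s,p} \to 0$; (2) split $w_\delta(x)-w_\delta(y)$ into the two pieces; (3) handle piece one by dominated convergence; (4) handle piece two by the $\delta^{n-sp}$ measure estimate; (5) combine with $w_\delta \to 0$ in $L^p$ (trivial since the norm on $W_0^{s,p}$ is just $[\cdot]_{s,p}$, so this last point is not even needed). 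Steps (3) and (4) are the substance; everything else is bookkeeping.
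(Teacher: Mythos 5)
Your proof is correct and takes essentially the same route as the paper: write $\eta_\delta u-u=(\eta_\delta-1)u$, split $(\eta_\delta-1)u(x)-(\eta_\delta-1)u(y)=(\eta_\delta(x)-1)(u(x)-u(y))+u(y)(\eta_\delta(x)-\eta_\delta(y))$, kill the first piece by dominated convergence (using $|1-\eta_\delta|\le1$, $\eta_\delta\to1$ a.e., and $u\in W^{s,p}$), and bound the second by $C|u|_\infty^p\,\delta^{n-sp}$. Two minor comments. First, the preliminary ``far/near $B_R$'' decomposition in your plan is superfluous once the commutator split is in place. Second, for the second piece the paper's bound is cleaner: it just uses $|u|_\infty^p\,[1-\eta_\delta]_{s,p}^p=|u|_\infty^p\,\delta^{n-sp}[1-\eta_1]_{s,p}^p$ by the scaling of the Gagliardo seminorm. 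Your direct estimate reaches the same exponent, but the step where you multiply the uniform bound $\int_x|\eta_\delta(x)-\eta_\delta(y)|^p|x-y|^{-n-sp}dx\le C\delta^{-sp}$ by $|B_{3\theta\delta}|$ tacitly restricts the $y$-integral to that small ball, which is not legitimate as written (the integrand is nonzero whenever $x$ lies in $B_{3\theta\delta}$ even if $y$ is far away); a further case split shows the far-$y$ contribution is of the same order $\delta^{n-sp}$, but it needs a line of justification that the scaling identity bypasses entirely.
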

\begin{proof}
It is easy to see that there exists $C=C(p)>0$ such that
\begin{align*}
\|\eta_{\delta} u-u\|^p&\le C\int_{\mathbb{R}^{2n}}\frac{|1-\eta_{\delta}(x)|^p|u(x)-u(y)|^p}{|x-y|^{n+sp}}dxdy+C\int_{\mathbb{R}^{2n}}\frac{|(1-\eta_{\delta})(x)-(1-\eta_{\delta})(y)|^p|u(y)|^p}{|x-y|^{n+sp}} dxdy\\
&= K_1+K_2.
\end{align*}
By the Lebesgue's dominated convergence theorem, we have $\underset{\delta\to 0}{\lim}K_1=0$. Moreover, since $u\in L^{\infty}(\Omega)$,
\[
K_2\le C\|1-\eta_{\delta}\|^p=C\|\eta_{\delta}\|^p= C\|\eta_1\|^p\delta^{n-sp}.
\]
%\begin{equation}\label{2209152034}
%\|\eta_{\delta}u_\lambda\|^p\ge \|u_\lambda\|^p+o(1)
%\end{equation}
%as $\delta\to 0$. By a direct computation
%\begin{align*}
%\|\eta_{\delta}u_\lambda\|^p & = \int_{\mathbb{R}^{2n}}\frac{|\eta_\delta u_\lambda(x)-\eta_\delta u_\lambda (y)|^p}{|x-y|^{n+sp}}dxdy\\
%& \ge\int_{B_{3\theta\delta}^c\times B_{3\theta\delta}^c}\frac{| u_\lambda(x)-u_\lambda (y)|^p}{|x-y|^{n+sp}}dxdy\\
%& = \|u_\lambda\|^p-\int_{B_{3\theta\delta}\times B_{3\theta\delta}}\frac{| u_\lambda(x)-u_\lambda (y)|^p}{|x-y|^{n+sp}}dxdy-2\int_{B_{3\theta\delta}^c\times B_{3\theta\delta}}\frac{| u_\lambda(x)-u_\lambda (y)|^p}{|x-y|^{n+sp}}dxdy
%\end{align*}
%Clearly
%\[
%\lim_{\delta \to 0} \int_{B_{3\theta\delta}\times B_{3\theta\delta}}\frac{| u_\lambda(x)-u_\lambda (y)|^p}{|x-y|^{n+sp}}dxdy = 0.
%\]
%Moreover, since $|x-y|\ge |x|-3\theta\delta\ge \frac{2|x|}{5}$ for $(x,y)\in \Omega^c\times B_{3\theta\delta}$, we obtain that
%\begin{align*}
%\int_{B_{3\theta\delta}^c\times B_{3\theta\delta}}\frac{| u_\lambda(x)-u_\lambda (y)|^p}{|x-y|^{n+sp}}dxdy
%& = \int_{(\Omega\cap B_{3\theta\delta}^c)\times B_{3\theta\delta}}\frac{| u_\lambda(x)-u_\lambda (y)|^p}{|x-y|^{n+sp}}dxdy+\int_{\Omega^c\times B_{3\theta\delta}}\frac{|u_\lambda (y)|^p}{|x-y|^{n+sp}}dxdy\\
%& \le o(1)+ C\int_{\Omega^c\times B_{3\theta\delta}}\frac{|u_\lambda (y)|^p}{|x|^{n+sp}}dxdy,
%\end{align*}
%which tends to zero as $\delta\to 0$. Therefore \eqref{2209152034} is verified.
Therefore $\lim_{\delta \to 0}\|\eta_{\delta} u-u\|^p = 0$ as $n > sp$.
\end{proof}
By the above lemma, we get $\gamma_{\varepsilon,\delta}\in \Gamma_{\varepsilon,\delta}$. The second key observation is 
\begin{equation}\label{23010619}
\underset{u\in\gamma_{\varepsilon,\delta} ([0, 1])}{\sup}{\widetilde I}_{\lambda}(u)<c_{s, p},
\end{equation}
which means then $m_{\varepsilon,\delta}<c_{s, p}$. We begin with the following basic inequality.
\begin{lemma}\label{2209151434}
Assume that $p\in [2,\infty)$ and $\gamma\in (0, 2]$, then there exists $C=C(p, \gamma)>0$ such that
\begin{align*}
|a-b|^p\le a^p+b^p-pab^{p-1}+Ca^{\gamma}b^{p-2}
\end{align*}
for all $a, b > 0$.
\end{lemma}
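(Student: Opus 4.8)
The plan is to get rid of the scaling and reduce to a one–variable inequality. After dividing the claimed estimate by $b^p$ and setting $t=a/b>0$, it suffices to find $C=C(p,\gamma)>0$ with
\[
\phi(t):=|t-1|^p-t^p-1+pt\le C\,t^\gamma\qquad\text{for all }t>0 .
\]
Since $t\mapsto\phi(t)/t^\gamma$ is continuous on $(0,\infty)$ and automatically bounded on every compact subinterval, everything comes down to controlling it as $t\to0^+$ (i.e.\ $a\le b$) and as $t\to+\infty$ (i.e.\ $a\ge b$).

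For small $t$ I would Taylor–expand $s\mapsto(1-s)^p$ with the Lagrange remainder: for $t\in(0,1]$ there is $\xi\in(0,t)$ such that $(1-t)^p=1-pt+\frac{p(p-1)}{2}(1-\xi)^{p-2}t^2$. Since $p\ge2$ and $\xi\in(0,1)$ we have $(1-\xi)^{p-2}\le1$, hence
\[
\phi(t)=(1-t)^p-1+pt-t^p=\frac{p(p-1)}{2}(1-\xi)^{p-2}t^2-t^p\le\frac{p(p-1)}{2}\,t^2\le\frac{p(p-1)}{2}\,t^\gamma ,
\]
where the last step uses $\gamma\le2$ and $t\le1$. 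So the ratio stays bounded by $\frac{p(p-1)}{2}$ on $(0,1]$.

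For large $t$ the delicate point — and what I expect to be the main obstacle — is that $\phi$ contains the unbounded linear term $+pt$, and one must see that it is absorbed by $(t-1)^p-t^p$ instead of discarding the latter. The convexity bound $t^p-(t-1)^p=p\int_{t-1}^{t}s^{p-1}\,ds\ge p(t-1)^{p-1}$, valid since $s\mapsto s^{p-1}$ is nondecreasing on $[0,\infty)$ for $p\ge2$, gives for $t\ge1$
\[
\phi(t)\le -p(t-1)^{p-1}+pt-1 ,
\]
which equals the constant $1$ when $p=2$ and tends to $-\infty$ when $p>2$ (because $(t-1)^{p-1}$ is then superlinear in $t$). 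Thus $\phi$ is continuous and bounded above on $[1,\infty)$; setting $C_2:=\max\{1,\sup_{t\ge1}\phi(t)\}<\infty$ and using $t^\gamma\ge1$ on $[1,\infty)$, we get $\phi(t)\le C_2\le C_2t^\gamma$ there. Taking $C:=\max\{\frac{p(p-1)}{2},C_2\}$ yields $\phi(t)\le Ct^\gamma$ for every $t>0$, and multiplying back through by $b^p$ proves the lemma. Apart from the large–$t$ cancellation, the argument is only a second–order Taylor expansion together with a continuity/compactness remark.
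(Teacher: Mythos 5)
Your proof follows the same route as the paper's: reduce to the one-variable function $f(t)=|1-t|^p-1-t^p+pt$ and show $\sup_{t>0}t^{-\gamma}f(t)<\infty$, controlling the ratio near $0$ by a Taylor expansion and at $\infty$ by the convexity cancellation $t^p-(t-1)^p\ge p(t-1)^{p-1}$; you simply make explicit the two limiting estimates that the paper treats as immediate. One caveat shared with the paper: dividing by $b^p$ implicitly assumes the inequality is $p$-homogeneous, which only holds when $\gamma=2$ (the sole value the paper invokes) — for $\gamma<2$, $p>2$ the displayed two-variable inequality actually fails (take $a=b\to\infty$, where the right-hand side tends to $-\infty$), and what your one-variable bound truly yields is the homogeneous version with $Ca^\gamma b^{p-\gamma}$ in place of $Ca^\gamma b^{p-2}$.
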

\begin{proof}
Let $f(t)=|1-t|^p- 1 - t^p +pt$. We see that for $p \geq 2$, $\limsup_{t \to \infty} f(t) \leq 0$, then for any $\gamma \in (0, 2]$, $\sup_{t > 0} t^{-\gamma}f(t)  < \infty$, so we are done.
\end{proof}

Now we prove the claim \eqref{23010619}.
\begin{lemma}\label{2209212151}
Assume that $p\in [2, \infty)$, $p-1<q<p$, $n>\frac{sp(q+1)}{q+1-p}$. Let $\theta$ be given in Lemma \ref{2209051345}, $B_{5\theta \delta}(0)\subset \Omega$, and $m_{\varepsilon,\delta}$ be given in \eqref{2209150938}. Then there exist $\delta\ge 2\varepsilon>0$ such that $m_{\varepsilon,\delta}<c_{s, p}$.
\end{lemma}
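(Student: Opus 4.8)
The plan is to run the reduction already sketched in the text, using the explicit path $\gamma_{\varepsilon,\delta}$ of \eqref{2301091202}, which belongs to $\Gamma_{\varepsilon,\delta}$ by Lemma \ref{2210121045}, so that $m_{\varepsilon,\delta}\le\sup_{t\in[0,1]}{\widetilde I}_\lambda(\gamma_{\varepsilon,\delta}(t))$ and it suffices to bound ${\widetilde I}_\lambda$ along the two pieces of the path by $c_{s,p}$. On $[0,\tfrac12]$ the path is $t\mapsto\eta_{2t\delta}u_\lambda$; rerunning the estimate of Lemma \ref{2210121045} with scale $2t\delta\le\delta$ gives $\sup_{t\in[0,1/2]}\|\eta_{2t\delta}u_\lambda-u_\lambda\|\to0$ as $\delta\to0$ (the two remainder terms there are controlled by $\delta$ only, uniformly in $t$), so by continuity of ${\widetilde I}_\lambda$ this piece stays below $c_{s,p}={\widetilde I}_\lambda(u_\lambda)+\tfrac{s}{n}S_{s,p}^{n/(sp)}$ once $\delta$ is small. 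Everything therefore reduces to \eqref{2307070041}, i.e. to showing $\sup_{t\ge0}{\widetilde I}_\lambda(\eta_\delta u_\lambda+tu_{\varepsilon,\delta})<c_{s,p}$ for a good choice of $\varepsilon,\delta$.

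The structural point that makes the remaining estimate workable is that $\eta_\delta u_\lambda$ is supported in $\{|x|\ge 2\theta\delta\}$ (by \eqref{2209151128}) while $u_{\varepsilon,\delta}$ is supported in $\overline{B_{\theta\delta}(0)}$ (by \eqref{2209151133}), so these nonnegative functions have disjoint supports. Applying $|a-b|^p\le a^p+b^p$ for $a,b\ge0$ to the Gagliardo difference quotients gives $\|\eta_\delta u_\lambda+tu_{\varepsilon,\delta}\|^p\le\|\eta_\delta u_\lambda\|^p+t^p\|u_{\varepsilon,\delta}\|^p$, while $|\eta_\delta u_\lambda+tu_{\varepsilon,\delta}|_r^r=|\eta_\delta u_\lambda|_r^r+t^r|u_{\varepsilon,\delta}|_r^r$ exactly, for $r\in\{q,p_s^*\}$. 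Since $0\le\eta_\delta\le1$ we have $|\eta_\delta u_\lambda|_r^r\le|u_\lambda|_r^r$; and since $u_\lambda\in L^\infty(\Omega)$ solves $(P_\lambda)$ we have $(-\Delta)_p^su_\lambda=\lambda u_\lambda^{q-1}+u_\lambda^{p_s^*-1}\in L^\infty(\Omega)$, so Lemma \ref{2209151022} (applicable because $B_{5\theta\delta}(0)\subset\Omega$) gives $\|\eta_\delta u_\lambda\|^p\le\|u_\lambda\|^p+C\delta^{n-sp}$. Combining,
\[
{\widetilde I}_\lambda(\eta_\delta u_\lambda+tu_{\varepsilon,\delta})\le{\widetilde I}_\lambda(u_\lambda)+C\delta^{n-sp}+\Psi(t),\qquad \Psi(t):=\frac{t^p}{p}\|u_{\varepsilon,\delta}\|^p-\frac{\lambda t^q}{q}|u_{\varepsilon,\delta}|_q^q-\frac{t^{p_s^*}}{p_s^*}|u_{\varepsilon,\delta}|_{p_s^*}^{p_s^*}.
\]

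To estimate $\sup_{t\ge0}\Psi$, first drop the nonpositive middle term: $\Psi(t)\le h(t):=\tfrac{t^p}{p}\|u_{\varepsilon,\delta}\|^p-\tfrac{t^{p_s^*}}{p_s^*}|u_{\varepsilon,\delta}|_{p_s^*}^{p_s^*}$, whose maximum equals $\tfrac{s}{n}\big(\|u_{\varepsilon,\delta}\|^p\big)^{n/(sp)}\big(|u_{\varepsilon,\delta}|_{p_s^*}^{p_s^*}\big)^{-(n-sp)/(sp)}$, which by \eqref{2209021656}--\eqref{2209021657} is $\le\tfrac{s}{n}S_{s,p}^{n/(sp)}+C(\varepsilon/\delta)^{(n-sp)/(p-1)}$ once $\varepsilon/\delta$ is small. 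A routine argument shows the maximizer $t_{**}$ of $\Psi$ stays in a compact subinterval of $(0,\infty)$ uniformly for small $\varepsilon,\delta$ (if $t_{**}\to0$ then $\sup\Psi\to0$, contradicting $\sup\Psi\ge\sup h-C|u_{\varepsilon,\delta}|_q^q$, which is bounded below by a positive constant since $|u_{\varepsilon,\delta}|_q^q\to0$), so $\sup_{t\ge0}\Psi\le\tfrac{s}{n}S_{s,p}^{n/(sp)}+C(\varepsilon/\delta)^{(n-sp)/(p-1)}-c\lambda|u_{\varepsilon,\delta}|_q^q$. Finally, $u_{\varepsilon,\delta}=U_\varepsilon$ on $B_\delta$ and $U(r)\ge c_1 r^{-(n-sp)/(p-1)}$ for $r\ge1$ (Lemma \ref{2209051345}), so a change of variables gives $|u_{\varepsilon,\delta}|_q^q\ge\varepsilon^{\,n-q(n-sp)/p}\int_{B_{\delta/\varepsilon}}U^q$; the hypothesis $n>\tfrac{sp(q+1)}{q+1-p}$ forces $n>\tfrac{spq}{q+1-p}$, i.e. $q(n-sp)/(p-1)>n$, so $U\in L^q(\R^n)$ and $\int_{B_{\delta/\varepsilon}}U^q\ge\tfrac12\int_{\R^n}U^q$ for $\delta/\varepsilon$ large, whence $|u_{\varepsilon,\delta}|_q^q\ge c\,\varepsilon^{\,n-q(n-sp)/p}$ with $n-q(n-sp)/p>0$.

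Choosing $\varepsilon=\delta^{k+1}$ (so $\delta\ge2\varepsilon$ holds automatically for small $\delta$) and collecting the estimates,
\[
\sup_{t\ge0}{\widetilde I}_\lambda(\eta_\delta u_\lambda+tu_{\varepsilon,\delta})\le{\widetilde I}_\lambda(u_\lambda)+\frac{s}{n}S_{s,p}^{\frac{n}{sp}}+C\delta^{n-sp}+C\delta^{\frac{k(n-sp)}{p-1}}-c\lambda\,\delta^{(k+1)\left(n-\frac{q(n-sp)}{p}\right)}.
\]
It remains to pick $k\in(0,p-1)$ making the exponent $(k+1)\big(n-\tfrac{q(n-sp)}{p}\big)$ strictly smaller than both $n-sp$ and $\tfrac{k(n-sp)}{p-1}$; then the negative term dominates as $\delta\to0^+$, the right-hand side is $<{\widetilde I}_\lambda(u_\lambda)+\tfrac{s}{n}S_{s,p}^{n/(sp)}=c_{s,p}$ for all small $\delta$, and together with the $[0,\tfrac12]$-estimate this gives $m_{\varepsilon,\delta}<c_{s,p}$. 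The one genuinely delicate point — and the only place where the precise dimensional restriction enters — is verifying that this admissible window for $k$ is nonempty: this reduces to $n-\tfrac{q(n-sp)}{p}<n-sp$ together with a second inequality among $n,p,q,s$ obtained after elementary algebra, both of which are consequences of $n>\tfrac{sp(q+1)}{q+1-p}$; the secondary technical point is the uniform localization of the maximizer $t_{**}$ of $\Psi$.
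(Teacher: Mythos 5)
Your overall strategy is correct and essentially parallels the paper's, but you avoid one layer of technicality: where the paper decomposes $\|\eta_\delta u_\lambda + R u_{\varepsilon,\delta}\|^p$ via the three regions $A_1,A_2,A_3$ and controls the cross region with Lemma \ref{2209151434} (keeping a negative correction $-L_3\sim R^{p-1}\varepsilon^{(n-sp)/p}$ at the cost of a positive error $L_4\sim R^{p-2}\delta^{(n-sp)[1-\frac{(p-2)(p-k-1)}{p(p-1)}]}$), you instead use the crude bound $\|f+g\|^p\le\|f\|^p+\|g\|^p$ for nonnegative $f,g$ with disjoint supports. This simplification is in fact legitimate here, because the paper's $L_3$ term is ultimately discarded in \eqref{2301082023} (it is negative and of higher order than $\delta^{k(n-sp)/(p-1)}$, so dropping it weakens nothing), and $L_4$ is absorbed into the same error; the only negative term that matters in the regime $p-1<q<p$ is the one coming from $|u_{\varepsilon,\delta}|_q^q$, exactly as you exploit. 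Your exponent count then matches the paper's: at $k=p-1$ both constraints $(k+1)(n-\tfrac{q(n-sp)}{p})<n-sp$ and $(k+1)(n-\tfrac{q(n-sp)}{p})<\tfrac{k(n-sp)}{p-1}$ coincide and are equivalent to $n>\tfrac{sp(q+1)}{q+1-p}$, so by continuity some $k$ slightly below $p-1$ works.

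There is one genuine (though small and easily repaired) flaw: you write ``$|\eta_\delta u_\lambda|_r^r\le|u_\lambda|_r^r$'', but this is the wrong direction for the bound you want. Since these terms enter ${\widetilde I}_\lambda$ with a minus sign, you need a \emph{lower} bound on $|\eta_\delta u_\lambda|_r^r$; what is actually needed, and what the paper uses in \eqref{2209022045}, is
\[
|\eta_\delta u_\lambda|_r^r\ \ge\ |u_\lambda|_r^r - \int_{B_{3\theta\delta}} u_\lambda^r\,dx\ \ge\ |u_\lambda|_r^r - C\delta^n,
\]
valid because $0\le\eta_\delta\le 1$, $\eta_\delta\equiv 1$ outside $B_{3\theta\delta}(0)$, and $u_\lambda\in L^\infty(\Omega)$. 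With this replacement your displayed combination ${\widetilde I}_\lambda(\eta_\delta u_\lambda+tu_{\varepsilon,\delta})\le{\widetilde I}_\lambda(u_\lambda)+C\delta^{n-sp}+\Psi(t)$ is justified (the extra $C\delta^n$ is dominated by $C\delta^{n-sp}$), and the rest of your argument — the estimate of $\sup h$, the uniform localization of the maximizer $t_{**}$ of $\Psi$, the lower bound $|u_{\varepsilon,\delta}|_q^q\gtrsim\varepsilon^{\,n-q(n-sp)/p}$, and the choice $\varepsilon=\delta^{k+1}$ — is correct and delivers $m_{\varepsilon,\delta}<c_{s,p}$ as claimed.
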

\begin{proof}
By Proposition \ref{2209252041}, $u_{\lambda}\in C^{\alpha}(\overline{\Omega})$ for some $\alpha\in (0, s]$, which with Lemma \ref{2209151022} deduces that
\begin{equation}\label{2209022045}
\begin{aligned}
{\widetilde I}_{\lambda}(\eta_{\delta}u_\lambda)&=\frac1p\lVert \eta_{\delta}u_\lambda \rVert^p-\frac{\lambda}q |\eta_{\delta}u_\lambda|_q^q-\frac{1}{p_s^*}|\eta_{\delta}u_\lambda|_{p_s^*}^{p_s^*}\\
&\le \frac1p\lVert u_\lambda \rVert^p + C\left| (-\Delta)_p^su_\lambda \right|_{\infty}^{p/(p-1)}\delta^{n-sp}-\frac{\lambda}q |u_\lambda|_q^q-\frac{1}{p_s^*}|u_\lambda|_{p_s^*}^{p_s^*}+C\delta^n\\
&={\widetilde I}_{\lambda}(u_\lambda)+C\delta^{n-sp}+C\delta^n.
\end{aligned}
\end{equation}
We will estimate the maximum of ${\widetilde I}_{\lambda}(\eta_\delta u_\lambda + Ru_{\varepsilon,\delta})$ with respect to $R>0$.

\medskip\noindent
{\bf Step 1.} {\it Estimate for $W_0^{s,p}(\Omega)$-norm.} We claim that if $\delta/\varepsilon\ge 2$, the following estimate holds true.
\begin{equation}\label{2209021653}
\begin{aligned}
\lVert \eta_\delta u_\lambda + R u_{\varepsilon,\delta}\rVert^p & \le \lVert \eta_\delta u_\lambda\rVert^p+{R^p}\left[S^{\frac{n}{sp}}+C\left(\frac{\varepsilon}{\delta}\right)^{(n-sp)/ (p-1)}\right]\\
&\quad -C_1R^{p-1}\varepsilon^{\frac{n-sp}{p}}+ \frac{C_2R^{p-2}}{\delta^{sp}}\varepsilon^{n-\frac{(n-sp)(p-2)}{p}}\left(\frac{\delta}{\varepsilon}\right)^{n-\frac{(n-sp)(p-2)}{p-1}}.
\end{aligned}
\end{equation}
where $C_1, C_2, C_3$ are independent of $\varepsilon,\delta, R$. Indeed,
\begin{equation}\label{2301062031}
\begin{aligned}
\lVert \eta_\delta u_\lambda + R u_{\varepsilon,\delta}\rVert^p& = \int_{\mathbb{R}^{2n}}\frac{|\eta_\delta u_\lambda(x) + R u_{\varepsilon,\delta}(x)-\eta_\delta u_\lambda (y)- R u_{\varepsilon,\delta}(y)|^p}{|x-y|^{n+sp}}dxdy\\
& \le \int_{A_1}\frac{|\eta_\delta u_\lambda(x)-\eta_\delta u_\lambda (y)|^p}{|x-y|^{n+sp}}dxdy+\int_{A_2}\frac{|R u_{\varepsilon,\delta}(x)-R u_{\varepsilon,\delta}(y)|^p}{|x-y|^{n+sp}}dxdy\\
&\quad +2\int_{A_3}{\frac{|\eta_\delta u_\lambda(x)-R u_{\varepsilon,\delta}(y)|^p}{|x-y|^{n+sp}}}dxdy\\
& =: K_1+K_2+2K_3,
\end{aligned}
\end{equation}
where $A_1=B^c_{\theta \delta}(0)\times B^c_{\theta \delta}(0)$, $A_2=B_{2\theta \delta}(0)\times B_{2\theta \delta}(0)$, $A_3=B^c_{2\theta \delta}(0)\times B_{\theta \delta}(0)$. We estimate $K_3$ by Lemma \ref{2209151434} with $\gamma=2$. For $p\in [2,\infty)$, there exists $C=C(p)>0$ such that
\begin{equation}\label{2301062035}
\begin{aligned}
K_3& \le \int_{A_3}{\frac{|\eta_\delta u_\lambda(x)|^p}{|x-y|^{n+sp}}}dxdy+\int_{A_3}{\frac{|R u_{\varepsilon,\delta}(y)|^p}{|x-y|^{n+sp}}}dxdy\\
&\quad -p\int_{A_3}{\frac{|\eta_\delta u_\lambda(x)||R u_{\varepsilon,\delta}(y)|^{p-1}}{|x-y|^{n+sp}}}dxdy+C\int_{A_3}{\frac{|\eta_\delta u_\lambda(x)|^{2}|R u_{\varepsilon,\delta}(y)|^{p-2}}{|x-y|^{n+sp}}}dxdy\\
& =: L_1+L_2-L_3+L_4.
\end{aligned}
\end{equation}
First
\begin{equation}\label{2301062026}
K_1+2L_1 = \lVert \eta_\delta u_\lambda \rVert^p\quad\text{and}\quad K_2+2L_2 = \lVert R u_{\varepsilon,\delta} \rVert^p.
\end{equation}
For any $y\in B_{\theta \delta}(0)$, $x\in B^c_{2\theta \delta}(0)$, we have $|x-y|\le |x|+\theta\delta$. By \eqref{2209151133} and \eqref{2307142307}, we obtain
\begin{align*}
\begin{aligned}
L_3&\ge pR^{p-1}\int_{B_{5\theta \delta}\backslash B_{2\theta \delta}}\frac{|\eta_\delta u_\lambda(x)|}{(|x|+\theta\delta)^{n+sp}}dx\int_{B_{\theta \delta}}u_{\varepsilon,\delta}(y)^{p-1}dy\\ & \ge \frac{CR^{p-1}}{\delta^{sp}}\int_{B_{\theta \delta}}u_{\varepsilon,\delta}(y)^{p-1}dy\\
&\ge \frac{CR^{p-1}}{\delta^{sp}}\int_{B_{ \delta}}U_{\varepsilon}(y)^{p-1}dy=\frac{CR^{p-1}\varepsilon^{n-\frac{(n-sp)(p-1)}{p}}}{\delta^{sp}}\int_{B_{ \delta/\varepsilon}}U(y)^{p-1}dy.
\end{aligned}
\end{align*}
Due to Lemma \ref{2209051345} and $\delta/\varepsilon\ge 2$, we get
\[
\begin{aligned}
\int_{B_{ \delta/\varepsilon}}U(y)^{p-1}dy&\ge C\int_1^{\delta/\varepsilon}U(r)^{p-1}r^{n-1}dr \\
&\ge  C\int_1^{\delta/\varepsilon}\frac{1}{r^{n-sp}}r^{n-1}dr=\frac{C}{sp}\left[\left(\frac{\delta}{\varepsilon}\right)^{sp}-1\right]\ge \frac{C}{2sp}\left(\frac{\delta}{\varepsilon}\right)^{sp}.
 \end{aligned}
\]
Thus,
\begin{equation}\label{2301081623}
L_3\ge \frac{CR^{p-1}\varepsilon^{n-\frac{(n-sp)(p-1)}{p}}}{\delta^{sp}}\left(\frac{\delta}{\varepsilon}\right)^{sp}=CR^{p-1}\varepsilon^{n-\frac{(n-sp)(p-1)}{p}-sp}=CR^{p-1}\varepsilon^{\frac{n-sp}{p}}.
\end{equation}
For any $x\in B^c_{2\theta \delta}(0)$, $y\in B_{\theta \delta}(0)$, we have $|x-y|\ge |x|-\theta \delta \ge \frac{|x|}{2}$. It follows from \eqref{2301082313}
% with \eqref{2307150016}, \eqref{2307150015} and \eqref{2307150014} 
that
\begin{align*}
\begin{aligned}
L_4 \le \frac{CR^{p-2}}{\delta^{sp}}\int_{B_{\theta \delta}}u_{\varepsilon,\delta}(y)^{p-2}dy
& \le  \frac{CR^{p-2}}{\delta^{sp}}\int_{B_{\theta \delta}}U_{\varepsilon}(y)^{p-2}dy\\
& =\frac{CR^{p-2}}{\delta^{sp}}\varepsilon^{n-\frac{(n-sp)(p-2)}{p}}\int_{B_{\theta \delta/\varepsilon }}U(y)^{p-2}dy.
\end{aligned}
\end{align*}
Using $U\in L^{\infty}(\mathbb{R}^n)$, Lemma \ref{2209051345} and $\delta/\varepsilon\ge 2$, we have
\[
\begin{aligned}
\int_{B_{ \delta/\varepsilon}}U(y)^{p-2}dy& \le  C\int_1^{\delta/\varepsilon}U(r)^{p-2}r^{n-1}dr+C\\
& \le  C\int_1^{\delta/\varepsilon}\frac{1}{r^{\frac{(n-sp)(p-2)}{p-1}}}r^{n-1}dr+C\le C\left(\frac{\delta}{\varepsilon}\right)^{n-\frac{(n-sp)(p-2)}{p-1}}.\\
\end{aligned}
\]
Therefore
\[
L_4\le  \frac{CR^{p-2}}{\delta^{sp}}\varepsilon^{n-\frac{(n-sp)(p-2)}{p}}\left(\frac{\delta}{\varepsilon}\right)^{n-\frac{(n-sp)(p-2)}{p-1}},
\]
which together with \eqref{2301062031}--\eqref{2301081623} and \eqref{2209021656}, implies \eqref{2209021653}.

\medskip\noindent
{\bf Step 2.} {\it Estimates for power terms.}
We claim that
\begin{equation}\label{2301081854}
|\eta_{\delta}u_{\lambda}+Ru_{\varepsilon,\delta}|_q^q\ge |\eta_{\delta}u_{\lambda}|_q^q+CR^q\varepsilon^{n-\frac{(n-sp)q}{p}} \quad \mbox{for all }\; R > 0,
\end{equation}
and
\begin{equation}\label{2301081908}
|\eta_{\delta}u_{\lambda}+Ru_{\varepsilon,\delta}|_{p_s^*}^{p_s^*}\ge |\eta_{\delta}u_{\lambda}|_{p_s^*}^{p_s^*}+{R^{p_s^*}}\left[S^{\frac{n}{sp}}-C{\left(\frac{\varepsilon}{\delta}\right)}^{\frac{n}{p-1}}\right].
\end{equation}\par
Indeed, because the supports of $\eta_{\delta}u_{\lambda}$ and $u_{\varepsilon,\delta}$ are disjoint, there holds
\begin{equation}\label{2301081851}
|\eta_{\delta}u_{\lambda}+Ru_{\varepsilon,\delta}|_q^q=|\eta_{\delta}u_{\lambda}|_q^q+|Ru_{\varepsilon,\delta}|_q^q,
\end{equation}
and
\begin{equation}\label{2301081852}
|\eta_{\delta}u_{\lambda}+Ru_{\varepsilon,\delta}|_{p_s^*}^{p_s^*}=|\eta_{\delta}u_{\lambda}|_{p_s^*}^{p_s^*}+|Ru_{\varepsilon,\delta}|_{p_s^*}^{p_s^*}.
\end{equation}
By a direct computation,
\[
|u_{\varepsilon,\delta}|_q^q=\int_{B_{\theta \delta}}u_{\varepsilon,\delta}^q(y) dy\ge \int_{B_{ \delta}}U_{\varepsilon}(y)^{q}dy=\varepsilon^{n-\frac{(n-sp)q}{p}}\int_{B_{ \delta/\varepsilon}}U(y)^{q}dy.
\]
It follows from Lemma \ref{2209051345}, $\delta/\varepsilon\ge 2$ and $n>\frac{spq}{q-p+1}$ that
\[
\begin{aligned}
\int_{B_{ \delta/\varepsilon}}U(y)^{q}dy&\ge C\int_1^{\delta/\varepsilon}U(r)^{q}r^{n-1}dr \ge  C\int_1^{\delta/\varepsilon}\frac{1}{r^{(n-sp)q/(p-1)}}r^{n-1}dr\ge C.
 \end{aligned}
\]
Thus,
\begin{equation}\label{2210180957}
|u_{\varepsilon,\delta}|_q^q\ge C\varepsilon^{n-\frac{(n-sp)q}{p}}.
\end{equation}
By \eqref{2301081851}, \eqref{2210180957}, we have that \eqref{2301081854} holds. Using \eqref{2301081852} and \eqref{2209021657}, we get \eqref{2301081908}.

\medskip\noindent
{\bf Step 3.} {\it Conclusion}.\par
Combining \eqref{2209021653}, \eqref{2301081854} with \eqref{2301081908}, we deduce that
\[
\begin{aligned}
{\widetilde I}_{\lambda}(\eta_\delta u_\lambda+Ru_{\varepsilon,\delta})\le &{\widetilde I}_{\lambda}(\eta_\delta u_\lambda)+\frac{R^p}{p}\left[S^{\frac{n}{sp}}+C\left(\frac{\varepsilon}{\delta}\right)^{(n-sp)/ (p-1)}\right]-\frac{R^{p_s^*}}{p_s^*}\left[S^{\frac{n}{sp}}-C{\left(\frac{\varepsilon}{\delta}\right)}^{\frac{n}{p-1}} \right]\\
&-C_1R^q\varepsilon^{n-\frac{(n-sp)q}{p}}-C_1R^{p-1}\varepsilon^{\frac{n-sp}{p}}+ \frac{C_2R^{p-2}}{\delta^{sp}}\varepsilon^{n-\frac{(n-sp)(p-2)}{p}}\left(\frac{\delta}{\varepsilon}\right)^{n-\frac{(n-sp)(p-2)}{p-1}}.\\
\end{aligned}
\]
Taking $\varepsilon=\delta^{k+1}$ with $k>0$ in the above inequality, then
\begin{equation}\label{2301082021}
\begin{aligned}
{\widetilde I}_{\lambda}(\eta_\delta u_\lambda+Ru_{\varepsilon,\delta})
=& {\widetilde I}_{\lambda}(\eta_\delta u_\lambda)+g_{\delta}(R),
\end{aligned}
\end{equation}
with
\[
\begin{aligned}
g_{\delta}(R)=& \frac{R^p}{p}\left(S_{s,p}^{\frac{n}{sp}}+C\delta^{k(n-sp)/ (p-1)}\right)-\frac{R^{p_s^*}}{p_s^*}\left(S_{s,p}^{\frac{n}{sp}}-C{\delta}^{\frac{kn}{p-1}} \right)\\
&-C_1R^q\delta^{(k+1)\left[n-\frac{(n-sp)q}{p}\right]}-C_1R^{p-1}\delta^{\frac{(k+1)(n-sp)}{p}}+ {C_2R^{p-2}}{\delta^{(n-sp)\left[1-\frac{(p-2)(p-k-1)}{p(p-1)}\right]}}.\\
\end{aligned}
\]
Let $R_{\delta}\in \mathbb{R}^+$ satisfy
\[
g_{\delta}(R_{\delta})=\underset{R\in \mathbb{R}^+}{\max}\, g_{\delta}(R).
\]
Clearly, there exists $\delta_0>0$ such that when $\delta_0>\delta>0$, $\{R_\delta\}_{\delta_0>\delta>0}$ is bounded,  and it has a positive lower bound $T>0$. Let 
\[
h_{\delta}(R)= \frac{R^p}{p}\left(S_{s,p}^{\frac{n}{sp}}+C\delta^{k(n-sp)/ (p-1)}\right)-\frac{R^{p_s^*}}{p_s^*}\left(S_{s,p}^{\frac{n}{sp}}-C{\delta}^{\frac{kn}{p-1}} \right).
\]
Obviously, $h_R$ is increasing on $[0, R_\delta^*]$, decreasing on 
$[R_\delta^*, \infty)$ with 
$$R_\delta^* = \left(\frac{S_{s,p}^{\frac{n}{sp}}+C\delta^{k(n-sp)/ (p-1)}}{S_{s,p}^{\frac{n}{sp}}-C{\delta}^{\frac{kn}{p-1}}}\right)^\frac{1}{p_s^*-p}.$$ Therefore
\[
\underset{R\in\mathbb{R}^+}{\max}\, h_{\delta}(R)=\frac{s}{n}\frac{\left(S_{s,p}^{\frac{n}{sp}}+C\delta^{k(n-sp)/ (p-1)}\right)^{\frac{p_s^*}{p_s^*-p}}}{\left(S_{s,p}^{\frac{n}{sp}}-C{\delta}^{\frac{kn}{p-1}}\right)^{\frac{p}{p_s^*-p}}}=\frac{s}{n}S_{s,p}^{\frac{n}{sp}}+O\left( \delta^{k(n-sp)/ (p-1)}\right).
\]
Thus,
\begin{equation}\label{2307151236}
\begin{aligned}
\underset{R\in\mathbb{R}^+}{\max} g_{\delta}(R) \le & \underset{R\in\mathbb{R}^+}{\max} h_{\delta}(R)-C_1R_{\delta}^q\delta^{(k+1)\left(n-\frac{(n-sp)q}{p}\right)}\\
&-C_1R_{\delta}^{p-1}\delta^{\frac{(k+1)(n-sp)}{p}}+ {C_2R_{\delta}^{p-2}}{\delta^{(n-sp)\left[1-\frac{(p-2)(p-k-1)}{p(p-1)}\right]}}\\
=&\frac{s}{n}S_{s,p}^{\frac{n}{sp}}+O\left( \delta^{k(n-sp)/ (p-1)}\right)-C_1R_{\delta}^q\delta^{(k+1)\left[n-\frac{(n-sp)q}{p}\right]}\\
&-C_1R_{\delta}^{p-1}\delta^{\frac{(k+1)(n-sp)}{p}}+ {C_2R_{\delta}^{p-2}}{\delta^{(n-sp)\left[1-\frac{(p-2)(p-k-1)}{p(p-1)}\right]}}.\\
\end{aligned}
\end{equation}
Putting $0<k<p-1$, it follows that
\begin{equation}\label{2307151237}
\frac{k(n-sp)}{p-1}<\min\left\{(n-sp)\left[1-\frac{(p-2)(p-k-1)}{p(p-1)}\right],\frac{(k+1)(n-sp)}{p}\right\}.
\end{equation}
Using \eqref{2307151236} and \eqref{2307151237}, we get
\begin{equation}\label{2301082023}
\underset{R\in\mathbb{R}^+}{\max} g_{\delta}(R)\le \frac{s}{n}S_{s,p}^{\frac{n}{sp}}-C_1R_{\delta}^q\delta^{(k+1)\left(n-\frac{(n-sp)q}{p}\right)}+O\left( \delta^{k(n-sp)/ (p-1)}\right).
\end{equation}
By \eqref{2209022045}, \eqref{2301082021}, \eqref{2301082023} and $R_\delta>T>0$, we obtain
\[
\begin{aligned}
{\widetilde I}_{\lambda}(\eta_\delta u_\lambda+Ru_{\varepsilon,\delta})=c_{s, p}-C\delta^{(k+1)\left[n-\frac{(n-sp)q}{p}\right]}+O\left( \delta^{k(n-sp)/ (p-1)}\right)+O(\delta^{n-sp}).
\end{aligned}
\]
By $0<k<p-1$, to reach our aim, it suffices to have 
\begin{equation}\label{2210172347}
(k+1)\left[n-\frac{(n-sp)q}{p}\right]<\frac{k(n-sp)}{p-1},
\end{equation}
and it is equivalent to
\begin{equation}\label{2210172310}
n-\frac{(n-sp)q}{p}<k\left[ \frac{n-sp}{p-1}-n+\frac{(n-sp)q}{p}\right].
\end{equation}
We want to prove that there exists $0<k<p-1$ such that \eqref{2210172310} holds. In fact \eqref{2210172310} holds true with $k=p-1$, since we have
\[
n>\frac{sp(q+1)}{q+1-p}.
\]
We get then
\begin{equation}\label{2210121954}
\underset{R\in\mathbb{R}^+}{\max}\,{\widetilde I}_{\lambda}(\eta_\delta u_\lambda+Ru_{\varepsilon,\delta})<c_{s, p}
\end{equation}
if $\delta$ is sufficiently small. Combining Lemma \ref{2210121045} with \eqref{2210121954}, we conclude \eqref{23010619} provided $\delta$ is small enough, which means $m_{\varepsilon,\delta}<c_{s, p}$.
\end{proof}
Next, we proceed to check that the functional ${\widetilde I}_{\lambda}$ satisfies the Palais-Smale condition at the level $c<c_{s, p}$.  Recall that the functional ${\widetilde I}_{\lambda}$ is said satisfying the Palais-Smale condition at a level $c\in \mathbb{R}$ (for short $(PS)_c$) if any sequence $\{u_j\}\subset W_0^{s,p}(\Omega)$ such that
\begin{equation*}\label{eq2.4}
{\widetilde I}_{\lambda}(u_j)\rightarrow c \quad \mbox{and}\quad {\widetilde I}'_{\lambda}(u_j) \rightarrow 0 \; \mbox{ in }\;  W_0^{s,p}(\Omega)^*
\end{equation*}
admits a subsequence which is convergent in $ W_0^{s,p}(\Omega)$.
\begin{proposition}\label{2209210928}
Let $s\in (0,1)$, $1<q<p$, $n>sp$ and $u_\lambda$ be the minimal positive solution to $(P_\lambda)$ in Proposition \ref{Minimal_Le}. Assume that ${\widetilde I}_{\lambda}$ has only two critical points $0$ and $u_\lambda$. Then ${\widetilde I}_{\lambda}$ satisfies the $(PS)_c$ condition for all $c<c_{s, p}$.
\end{proposition}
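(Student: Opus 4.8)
The plan is to run a Brezis-Nirenberg style concentration-compactness argument, adapted to the nonlocal operator, and to invoke the hypothesis on the critical set of $\widetilde{I}_{\lambda}$ only at the very last step. Let $\{u_j\}\subset W_0^{s,p}(\Omega)$ satisfy $\widetilde{I}_{\lambda}(u_j)\to c$ and $\widetilde{I}'_{\lambda}(u_j)\to 0$ in $W_0^{s,p}(\Omega)^*$, with $c<c_{s,p}$. First I would prove that $\{u_j\}$ is bounded: since
\[
\widetilde{I}_{\lambda}(u_j)-\tfrac{1}{p_s^*}\langle\widetilde{I}'_{\lambda}(u_j),u_j\rangle=\Big(\tfrac1p-\tfrac1{p_s^*}\Big)\|u_j\|^p-\lambda\Big(\tfrac1q-\tfrac1{p_s^*}\Big)|u_j^+|_q^q
\]
and $|u_j^+|_q^q\le|u_j|_q^q\le C\|u_j\|^q$ with $q<p$, the left-hand side being $c+o(1)+o(\|u_j\|)$ forces $\sup_j\|u_j\|<\infty$. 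Passing to a subsequence, $u_j\rightharpoonup u$ in $W_0^{s,p}(\Omega)$, $u_j\to u$ in $L^r(\Omega)$ for every $r\in[1,p_s^*)$, and $u_j\to u$ a.e.\ in $\mathbb{R}^n$.

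Next I would identify $u$ as a critical point of $\widetilde{I}_{\lambda}$. Fixing $\varphi\in W_0^{s,p}(\Omega)$, in $\langle\widetilde{I}'_{\lambda}(u_j),\varphi\rangle\to 0$ the term $\int_\Omega(u_j^+)^{q-1}\varphi$ converges to $\int_\Omega(u^+)^{q-1}\varphi$ by strong $L^q$ convergence, and $\int_\Omega(u_j^+)^{p_s^*-1}\varphi\to\int_\Omega(u^+)^{p_s^*-1}\varphi$ because $(u_j^+)^{p_s^*-1}$ is bounded in $L^{p_s^*/(p_s^*-1)}(\Omega)$ and converges a.e. For the nonlocal term, the functions $(x,y)\mapsto|x-y|^{-(n+sp)/p'}J_{u_j}(x,y)$ are bounded in $L^{p'}(\mathbb{R}^{2n})$ (with $L^{p'}$-norm $\|u_j\|^{p-1}$) and converge a.e.\ to $|x-y|^{-(n+sp)/p'}J_u(x,y)$, hence weakly in $L^{p'}(\mathbb{R}^{2n})$; pairing against $(x,y)\mapsto|x-y|^{-(n+sp)/p}(\varphi(x)-\varphi(y))\in L^p(\mathbb{R}^{2n})$ gives $\langle(-\Delta)_p^su_j,\varphi\rangle\to\langle(-\Delta)_p^su,\varphi\rangle$. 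Therefore $\widetilde{I}'_{\lambda}(u)=0$, so by assumption $u=0$ or $u=u_\lambda$; in either case $u\ge 0$.

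Then I would split the sequence. Set $v_j:=u_j-u\rightharpoonup 0$; since $u\ge 0$ one checks $|u_j^+-u^+|\le|v_j|$ pointwise. The Brezis-Lieb lemma, applied in $L^p$ of $\mathbb{R}^{2n}$ with the measure $|x-y|^{-n-sp}\,dx\,dy$ for the Gagliardo seminorm and in $L^{p_s^*}(\Omega)$ for the power term, yields
\[
\|u_j\|^p=\|u\|^p+\|v_j\|^p+o(1),\qquad |u_j^+|_{p_s^*}^{p_s^*}=|u^+|_{p_s^*}^{p_s^*}+|u_j^+-u^+|_{p_s^*}^{p_s^*}+o(1),
\]
while $|u_j^+|_q^q\to|u^+|_q^q$. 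Inserting these into $\langle\widetilde{I}'_{\lambda}(u_j),u_j\rangle\to 0$ and subtracting $\langle\widetilde{I}'_{\lambda}(u),u\rangle=0$ leaves $\|v_j\|^p-|u_j^+-u^+|_{p_s^*}^{p_s^*}\to 0$; call $\ell\ge 0$ the common subsequential limit. The Sobolev inequality $\|v_j\|^p\ge S_{s,p}|v_j|_{p_s^*}^p\ge S_{s,p}|u_j^+-u^+|_{p_s^*}^p$ forces $\ell\ge S_{s,p}\,\ell^{p/p_s^*}$, so $\ell=0$ or $\ell\ge S_{s,p}^{n/(sp)}$. Inserting the same splitting into $\widetilde{I}_{\lambda}(u_j)\to c$ gives $c=\widetilde{I}_{\lambda}(u)+\big(\tfrac1p-\tfrac1{p_s^*}\big)\ell=\widetilde{I}_{\lambda}(u)+\tfrac{s}{n}\ell$. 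If $\ell\ge S_{s,p}^{n/(sp)}$, then $u=u_\lambda$ gives $c\ge c_{s,p}$, while $u=0$ gives $c\ge\tfrac{s}{n}S_{s,p}^{n/(sp)}>c_{s,p}$ (recall $\widetilde{I}_{\lambda}(u_\lambda)<0$, Lemma \ref{221229}); both contradict $c<c_{s,p}$. Hence $\ell=0$, i.e.\ $u_j\to u$ in $W_0^{s,p}(\Omega)$, which is exactly the $(PS)_c$ property.

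I expect the delicate point to be the identification of $u$ as a critical point: there is no compactness for $(-\Delta)_p^s$, so one must combine the a.e.\ convergence of the difference quotients $J_{u_j}(x,y)/|x-y|^{(n+sp)/p'}$ with their uniform $L^{p'}$ bound to pass to the limit in the nonlinear nonlocal term, and similarly justify the weak $L^{p_s^*/(p_s^*-1)}$ convergence of $(u_j^+)^{p_s^*-1}$. Once this is done, the two Brezis-Lieb decompositions make the energy and its derivative split cleanly, and the hypothesis on the critical set enters only to ensure that a nonzero concentration mass $\ell$ can occur solely at $u=0$ or $u=u_\lambda$, both of which push the level $c$ to or above $c_{s,p}$.
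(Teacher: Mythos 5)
Your proposal is correct and follows essentially the same route as the paper's proof: boundedness via the standard $\widetilde I_\lambda(u_j)-\frac1{p_s^*}\langle\widetilde I_\lambda'(u_j),u_j\rangle$ test, identification of the weak limit $u$ as a critical point via a.e. convergence plus the uniform $L^{p'}$ bound on the weighted difference quotients $|x-y|^{-(n+sp)/p'}J_{u_j}(x,y)$, the Brezis--Lieb splitting of both $\|\cdot\|^p$ and $|(\cdot)^+|_{p_s^*}^{p_s^*}$, the Sobolev dichotomy for the defect mass, and finally the hypothesis on the critical set together with $\widetilde I_\lambda(u_\lambda)<0$ to rule out a positive defect below the level $c_{s,p}$. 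The only cosmetic difference is that you phrase the last step as a dichotomy $\ell=0$ or $\ell\ge S_{s,p}^{n/(sp)}$ while the paper works directly with the inequality $\|\widehat u_j\|^p\big(S_{s,p}^{p_s^*/p}-\|\widehat u_j\|^{p_s^*-p}\big)\le o(1)$; these are algebraically equivalent.
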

\begin{proof}
Let $\{u_j\}\subset W_0^{s,p}(\Omega)$ be a $(PS)_c$ sequence of ${\widetilde I}_{\lambda}$ with $c<c_{s, p}$, i.e.
\begin{equation}\label{eq3.1}
 {\widetilde I}_{\lambda}(u_j)=\frac1{p}\lVert u_j \lVert^p-\frac{\lambda}{q}|u^+_j|^q_q-\frac1{p_s^*}|u^+_j|_{p_s^*}^{p_s^*}=c+o(1)
 \end{equation}
 and
\begin{equation}\label{eq3.2}
\begin{aligned}
\langle {\widetilde I}'_{\lambda}(u_j),v\rangle=&\int_{\mathbb{R}^{2n}}{\frac{J_{u_j}(x,y)(v(x)-v(y))}{|x-y|^{n+sp}}}dxdy-\lambda \int_{\Omega}(u^+_j)^{q-1}v dx\\
&-\int_{\Omega}(u^+_j)^{p_s^*-1}v dx=o(1)\lVert v \lVert \text{~~for~all~} v \in  W_0^{s,p}(\Omega).
\end{aligned}
\end{equation}
Taking $v=u_j$ in (\ref{eq3.2}), by \eqref{eq3.1} and the Sobolev embedding $W_0^{s,p}(\Omega)\subset L^q(\Omega)$, when $j$ goes to $\infty$,
\[
\begin{split}
p_s^*c+o(1)\|u_j\|+o(1)=&p_s^*{\widetilde I}_{\lambda}(u_j)-\langle {\widetilde I}'_{\lambda}(u_j), u_j \rangle\\
=&\left(\frac{p_s^*}{p}-1\right)\|u_j\|^p-\left(\frac{p_s^*}{q}-1\right)\lambda |u_j^+|_q^q\\
\ge &\left(\frac{p_s^*}{p}-1\right)\|u_j\|^p-\left(\frac{p_s^*}{q}-1\right)\lambda C\|u_j\|^q.\\
\end{split}
\]
%\[
%\begin{split}
%1+\lVert u_j \lVert &\ge \lambda |u^+_j|^q_q+|u^+_j|_{p_s^*}^{p_s^*}-\lVert u_j \lVert^p+o(1)\\
%&\ge q\big( \frac{\lambda}{q}|u^+_j|^q_q+\frac1{p_s^*}|u^+_j|_{p_s^*}^{p_s^*} \big)-\lVert u_j \lVert^p+o(1)\\
%&=q\big( \frac1{p}\lVert u_j \lVert^p-c+o(1)\big)-\lVert u_j \lVert^p+o(1)\\
%%~~~~\text{by (\ref{eq3.1})}\\
%&=(\frac{q}{p}-1)\lVert u_j \lVert^p-qc+o(1).
%\end{split}
%\]
Hence, $\{u_j\}$ is bounded in $W_0^{s,p}(\Omega)$. Therefore there is a renamed subsequence of $\{u_j\}$, which converges weakly to some $u\in W_0^{s,p}(\Omega)$ and $u_j\rightarrow u$ a.e. in $\mathbb{R}^n$. Now we will study more precisely the behavior of weakly convergent sequence $\{u_j\}$ in several steps.

\medskip\noindent
{\bf Step 1.}  We claim that 
$$\lim_{j\to \infty}(\lVert u_j-u\lVert^p - \lVert u_j \lVert^p + \lVert u \lVert^p) = 0, \quad \mbox{and} \quad \liminf_{j\to\infty}(|u_j-u|_{p_s^*}^{p_s^*} - | u^+_j|_{p_s^*}^{p_s^*} + | u^+ |_{p_s^*}^{p_s^*}) \geq 0.$$
Consider $$\Theta_j(x,y):=\frac{u_j(x)-u_j(y)}{|x-y|^{\frac{n}{p}+s}} \quad \mbox{ and }\quad\Theta(x,y):=\frac{u(x)-u(y)}{|x-y|^{\frac{n}{p}+s}}.$$ Then
$\{\Theta_j\}$ is bounded in $L^p(\mathbb{R}^{2n})$, and $\Theta_j(x,y)\rightarrow \Theta(x,y)$ a.e. in $\mathbb{R}^{2n}$. By Brezis-Lieb's lemma (see \cite[Lemma 1.32]{Willem96}), the first claim is done. Moreover, as $|u_j(x)-u(x)|\ge |u_j^+(x)-u^+(x)|$ and $u_j^+(x) \to u^+(x)$ a.e. in $\mathbb{R}^n$, using again Brezis-Lieb's lemma,
\[
|u_j-u|_{p_s^*}^{p_s^*}\ge |u^+_j-u^+|_{p_s^*}^{p_s^*}= | u^+_j|_{p_s^*}^{p_s^*}-| u^+ |_{p_s^*}^{p_s^*}+o(1),
\]
which gives the second claim.

\medskip
\noindent
{\bf Step 2.} For any $v\in W_0^{s,p}(\Omega),$
\begin{equation}\label{2208122234}
\lim_{j\to \infty} \int_{\mathbb{R}^{2n}}{\frac{J_{u_j}(x,y)(v(x)-v(y))}{|x-y|^{n+sp}}}dxdy = \int_{\mathbb{R}^{2n}}{\frac{J_{u}(x,y)(v(x)-v(y))}{|x-y|^{n+sp}}}dxdy.
\end{equation}
Indeed, define $\varPhi_j(x,y) := |x-y|^{-\frac{n+sp}{p'}}J_{u_j}(x,y)$ and $\varPhi(x,y):= |x-y|^{-\frac{n+sp}{p'}}J_{u}(x,y)$ where $p'=\frac{p}{p-1}$. Then $\{\varPhi_j\}$ is bounded in $L^{p'}(\mathbb{R}^{2n})$, and $\varPhi_j(x,y)\rightarrow \varPhi(x,y)$ a.e. in $\mathbb{R}^{2n}$, so $\varPhi_j$ converges weakly to $\varPhi$ in $L^{p'}(\mathbb{R}^{2n})$. On the other hand, $|x-y|^{-\frac{n+sp}{p}}|v(x)-v(y)|\in L^p(\mathbb{R}^{2n})$, hence \eqref{2208122234} holds.

\medskip
\noindent
{\bf Step 3.} By Step 2, it is easy to see that $u$ is a critical point of ${\widetilde I}_{\lambda}$, so
 \begin{equation}\label{eq3.6}
\lVert u \lVert^p = \lambda |u^+|_q^q+|u^+|_{p_s^*}^{p_s^*}.
\end{equation} 
%So $u$ is a weak solution of (\ref{eq1.1}).
Setting $\widehat{u}_j=u_j-u$, Step 1 implies then
\begin{equation}\label{eq3.3}
\lVert \widehat{u}_j\lVert^p=\lVert u_j \lVert^p-\lVert u \lVert^p+o(1)\;\; \text{and}\;\;| \widehat{u}_j|_{p_s^*}^{p_s^*}\ge | u^+_j |_{p_s^*}^{p_s^*}-| u^+ |_{p_s^*}^{p_s^*}+o(1).
\end{equation}
Taking $v=u_j$ in \eqref{eq3.2}, since $\{u_j\}$ is bounded in $W_0^{s,p}(\Omega)$ and $u_j \rightarrow u$ in $L^q(\Omega)$, we get
\begin{equation}\label{eq3.5}
\lVert u_j \lVert^p = \lambda |u^+|_q^q+|u^+_j|_{p_s^*}^{p_s^*}+o(1).
\end{equation}
It follows from (\ref{eq3.3}), (\ref{eq3.5}) and (\ref{eq3.6}) that
\begin{equation}\label{2209052311}
\begin{aligned}
\|\widehat{u}_j\|^p=|{u}_j^+|_{p_s^*}^{p_s^*}-|{u}^+|_{p_s^*}^{p_s^*}+o(1)\le | \widehat{u}_j|_{p_s^*}^{p_s^*}+o(1)\le \frac{\lVert \widehat{u}_j \lVert^{p_s^*}}{S_{s,p}^{p_s^*/p}}+o(1),
\end{aligned}
\end{equation}
so
\begin{equation}\label{eq3.7}
\lVert \widehat{u}_j \lVert^p(S_{s,p}^{p_s^*/p}-\lVert \widehat{u}_j \lVert^{p_s^*-p}) \le o(1).
\end{equation} \\
By \eqref{eq3.3} and \eqref{2209052311}, there holds
\[
\begin{aligned}
{\widetilde I}_{\lambda}(u_j)&={\widetilde I}_{\lambda}(u)+\frac{1}{p}\|\widehat{u}_j\|^p-\frac{1}{p_s^*}|{u}_j^+|_{p_s^*}^{p_s^*}+\frac{1}{p_s^*}|u^+|_{p_s^*}^{p_s^*}+o(1)\\
&={\widetilde I}_{\lambda}(u)+\frac{1}{p}\|\widehat{u}_j\|^p-\frac{1}{p_s^*}\|\widehat{u}_j\|^p+o(1)\\
&={\widetilde I}_{\lambda}(u)+\frac{s}{n}\|\widehat{u}_j\|^p+o(1).\\
\end{aligned}
\]
Hence
\[
{\widetilde I}_{\lambda}(u)+\frac{s}{n}\underset{j \rightarrow \infty}{\text{limsup}} \lVert \widehat{u}_j\lVert^p= c< c_{s, p}.
\]
As we assume that ${\widetilde I}_{\lambda}$ has only two critical points $0$ and $u_\lambda$, it follows that either $u=0$ or $u=u_\lambda$. By Lemma \ref{221229}, we know that ${\widetilde I}_{\lambda}(u_{\lambda})<0$. Hence,
\begin{equation}\label{eq3.8}
\underset{j \rightarrow \infty}{\text{limsup}} \lVert \widehat{u}_j\lVert^p< S_{s,p}^{\frac{n}{sp}}.
\end{equation}
Using (\ref{eq3.7}) and (\ref{eq3.8}), we get $\lVert \widehat{u}_j\lVert \rightarrow 0$, which means that $\{u_j\}$ has a convergent subsequence.
\end{proof}
To get the main result, we will apply Ghoussoub-Preiss' generalized mountain pass theorem \cite[Theorem (1)]{GP1989}.
\begin{theorem}\label{2307242024}
Let $X$ be a Banach space, and $\varphi$ be a $C^1$ functional on $X$. Taking $u,v \in X$ and consider 
\[
c=\underset{\gamma\in \Gamma}{\inf}\Big[\underset{0\le t\le 1}{\max}\varphi(\gamma(t))\Big]
\] 
where $\Gamma=\{\gamma\in C([0, 1], X): \gamma(0)=u, \gamma(1)=v\}$. Assume that $F$ is a closed subset of $X$ such that for any $\gamma\in \Gamma$, one has $\gamma([0, 1])\cap  \{x\in F: \varphi(x)\ge c\}\neq \varnothing$. Then there exists a sequence $\{x_j\} \subset X$ satisfying
\begin{itemize}
\item[(i)] $\underset{j\to\infty}{\lim} \mbox{dist}(x_j, F)=0$;
\item[(ii)] $\underset{j\to\infty}{\lim}\varphi(x_j)=c$;
\item[(iii)] $\underset{j\to\infty}{\lim}\|\varphi'(x_j)\|_{X^*}=0$.
\end{itemize}
\end{theorem}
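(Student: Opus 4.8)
The plan is to deduce the statement from Ekeland's variational principle applied, on the space of paths, to the ``height'' functional, in the spirit of the classical proof of the mountain pass theorem. First I would set up the complete metric space $(\Gamma,d)$: here $\Gamma=\{\gamma\in C([0,1],X):\gamma(0)=u,\ \gamma(1)=v\}$ is a closed subset of $C([0,1],X)$ --- hence complete --- and $d(\gamma_1,\gamma_2)=\max_{t\in[0,1]}\|\gamma_1(t)-\gamma_2(t)\|$. On it consider $J(\gamma)=\max_{t\in[0,1]}\varphi(\gamma(t))$. A routine compactness argument (uniform continuity of $\varphi$ along a convergent sequence of paths) shows $J$ is continuous; it is bounded below because $J(\gamma)\ge\varphi(u)$; and by definition $\inf_\Gamma J=c$. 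The linking hypothesis contributes the decisive extra fact that $J(\gamma)\ge c$ for \emph{every} $\gamma\in\Gamma$, since each path meets $F_c:=\{x\in F:\varphi(x)\ge c\}$.

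I would then argue by contradiction: if no sequence with properties (i)--(iii) existed, there would be $\varepsilon,\rho\in(0,1)$ such that $\|\varphi'(x)\|_{X^*}\ge\varepsilon$ throughout the tube $\mathcal T:=\{x\in X:\mathrm{dist}(x,F)\le 2\rho,\ |\varphi(x)-c|\le 2\rho\}$. In particular $\varphi'$ does not vanish on $\mathcal T$, so $\varphi$ has a locally Lipschitz pseudo-gradient field there, which after normalization yields a locally Lipschitz $V$ with $\|V\|\le1$ and $\langle\varphi'(x),V(x)\rangle\ge\varepsilon/2$ on $\mathcal T$. Applying Ekeland's principle to $J$ on $(\Gamma,d)$ with a parameter $\mu>0$ (to be sent to $0$) produces $\gamma_\mu\in\Gamma$ with $c\le J(\gamma_\mu)\le c+\mu$ and $J(\eta)\ge J(\gamma_\mu)-\mu\,d(\gamma_\mu,\eta)$ for all $\eta$. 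By linking there is $t_\mu$ with $\gamma_\mu(t_\mu)\in F$ and $\varphi(\gamma_\mu(t_\mu))\ge c$; since $\varphi\le J(\gamma_\mu)\le c+\mu$ along $\gamma_\mu$, this point lies in $\mathcal T$ and its height differs from $\max_t\varphi(\gamma_\mu(t))$ by at most $\mu$.

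The remaining and most delicate step is a \emph{localized quantitative deformation}. Picking a Lipschitz cut-off $\chi$ equal to $1$ on a $\rho$-neighbourhood of the part of $\gamma_\mu$ lying in $F$ at height within $\mu$ of $c$, and supported in $\mathcal T$, I would flow $\gamma_\mu$ for a short time $\tau>0$ along $-\chi V$. This gives $\gamma_\mu^\tau\in\Gamma$ with $d(\gamma_\mu,\gamma_\mu^\tau)\le\tau$, with $\varphi$ nonincreasing along the flow, and with $\varphi(\gamma_\mu^\tau(t))\le\varphi(\gamma_\mu(t))-\tfrac{\varepsilon}{2}\tau$ wherever $\chi\equiv1$. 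If one can conclude $J(\gamma_\mu^\tau)\le J(\gamma_\mu)-c'\tau$ with $c'$ independent of $\mu$, then for $\mu<c'$ this contradicts the Ekeland inequality $J(\gamma_\mu^\tau)\ge J(\gamma_\mu)-\mu\,d(\gamma_\mu,\gamma_\mu^\tau)$; hence the tube bound fails for every $\varepsilon$, and taking $\varepsilon=1/j$ yields $x_j$ with $\mathrm{dist}(x_j,F)\le 1/j$, $|\varphi(x_j)-c|\le 1/j$ and $\|\varphi'(x_j)\|_{X^*}<1/j$, which is (i)--(iii). I expect the genuine obstacle to lie exactly in the inequality $J(\gamma_\mu^\tau)\le J(\gamma_\mu)-c'\tau$: one has to show that the near-maximal portion of $\gamma_\mu$ (the $t$ with $\varphi(\gamma_\mu(t))$ within $O(\tau)$ of the maximum) cannot escape the region where $\chi\equiv1$, i.e. that an almost-optimal path concentrates its height near $F$. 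This is where the linking hypothesis must be used quantitatively --- one exploits that $\gamma_\mu^\tau$ still links $F_c$, so its maximum stays $\ge c$, together with $J(\gamma_\mu)-c\le\mu$; an alternative is to run Ekeland directly for the upper semicontinuous functional $J_F(\gamma):=\max\{\varphi(x):x\in\gamma([0,1])\cap F\}$, which also has infimum $c$ and whose maximum is automatically carried by $F$. Everything else is routine.
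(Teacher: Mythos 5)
This result is not proved in the paper: it is quoted verbatim as \cite[Theorem (1)]{GP1989}, so there is no internal argument to compare against. Your sketch is, however, in the right general spirit --- Ekeland's variational principle on the path space $(\Gamma,d)$ applied to $J(\gamma)=\max_t\varphi(\gamma(t))$, followed by a localized deformation --- which is indeed how the Ghoussoub--Preiss theorem is proved. The preliminary steps (completeness of $\Gamma$, continuity of $J$, $\inf_\Gamma J=c$, the linking giving $J\ge c$ on all of $\Gamma$) are correct as written.

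The gap you flag is the real one, and neither of your two proposed repairs closes it. The Ekeland almost-minimizer $\gamma_\mu$ need not attain its near-maximal heights anywhere near $F$: the linking hypothesis only produces \emph{one} point of $\gamma_\mu$ in $F\cap\{\varphi\ge c\}$, and says nothing about where the maximum of $\varphi\circ\gamma_\mu$ sits. Flowing along $-\chi V$ with $\chi$ supported near $F$ then leaves the true maximum untouched, so you only get $J(\gamma_\mu^\tau)\le J(\gamma_\mu)$ rather than the strict drop $J(\gamma_\mu^\tau)\le J(\gamma_\mu)-c'\tau$ that Ekeland's stability inequality must be played against. Your first fix ($\gamma_\mu^\tau$ still links, so $J(\gamma_\mu^\tau)\ge c\ge J(\gamma_\mu)-\mu$) gives a \emph{lower} bound on $J(\gamma_\mu^\tau)$, which is precisely what Ekeland already gives and hence cannot produce a contradiction. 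Your second fix ($J_F(\gamma)=\max\{\varphi(x):x\in\gamma([0,1])\cap F\}$) fails because $J_F$ is \emph{upper} semicontinuous for uniform convergence (if $\gamma_n(t_n)\in F$ and $\gamma_n\to\gamma$, limit points land in $\gamma([0,1])\cap F$, not the other way around); Ekeland's principle requires lower semicontinuity of the functional being minimized. The missing device in Ghoussoub--Preiss is a perturbation of $\varphi$ itself by a Lipschitz penalty vanishing on $F$ (so that the perturbed height functional still has infimum $c$ and its near-maximal set along an Ekeland path is forced into a prescribed neighbourhood of $F$), and then a careful deformation of the \emph{perturbed} functional; without this or an equivalent device, the argument as sketched does not go through.
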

\begin{lemma}\label{2210121050}
Let $s\in (0,1)$, $p\ge 2$, $p-1<q<p$ and $n>\frac{sp(q+1)}{q+1-p}$. Let $\Lambda$ be given in \eqref{2307091941}, and $m_{\varepsilon,\delta}$ be the mountain pass level defined in \eqref{2209150938} where $\delta$, $\varepsilon$ are given by Lemma \ref{2209212151}. For $\lambda\in (0, \Lambda)$, if $m_{\varepsilon,\delta}\neq 0$, then problem $(P_\lambda)$ has at least two positive solutions.
\end{lemma}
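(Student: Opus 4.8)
The plan is to argue by contradiction. If $\widetilde I_\lambda$ possesses any critical point besides $0$ and $u_\lambda$, then, being nontrivial, by the maximum principle (Lemma~\ref{22080821}) it is automatically a positive solution of $(P_\lambda)$ distinct from $u_\lambda$, and we are done. So I would suppose that $0$ and $u_\lambda$ are the only critical points of $\widetilde I_\lambda$; then Proposition~\ref{2209210928} applies and $\widetilde I_\lambda$ satisfies $(PS)_c$ for every $c<c_{s,p}$, while Lemma~\ref{2210121045} (ensuring $\gamma_{\varepsilon,\delta}\in\Gamma_{\varepsilon,\delta}\ne\varnothing$) and Lemma~\ref{2209212151} supply $\varepsilon,\delta>0$ with $m_{\varepsilon,\delta}<c_{s,p}$. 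Recall from Lemma~\ref{221229} that $\widetilde I_\lambda(u_\lambda)<0$, and from~\eqref{23010700} and the construction of $e$ that $\widetilde I_\lambda(e)<\widetilde I_\lambda(u_\lambda)$. A preliminary observation I would record is that $m_{\varepsilon,\delta}\ge\widetilde I_\lambda(u_\lambda)$: by~\eqref{2209211647} one has $\widetilde I_\lambda\ge\widetilde I_\lambda(u_\lambda)$ on $\overline{B_\rho(u_\lambda)}$, hence $\|e-u_\lambda\|>\rho$ since $\widetilde I_\lambda(e)<\widetilde I_\lambda(u_\lambda)$, so every $\gamma\in\Gamma_{\varepsilon,\delta}$ crosses each sphere $\partial B_{\rho_0}(u_\lambda)$ with $0<\rho_0\le\rho$ at a point where $\widetilde I_\lambda\ge\widetilde I_\lambda(u_\lambda)$, and taking the infimum over $\gamma$ gives the claim.

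Next I would split according to the dichotomy $m_{\varepsilon,\delta}>\widetilde I_\lambda(u_\lambda)$ or $m_{\varepsilon,\delta}=\widetilde I_\lambda(u_\lambda)$, applying in each case Ghoussoub--Preiss' theorem (Theorem~\ref{2307242024}) with $X=W_0^{s,p}(\Omega)$, $\varphi=\widetilde I_\lambda$, $u=u_\lambda$, $v=e$, $\Gamma=\Gamma_{\varepsilon,\delta}$, so that $c=m_{\varepsilon,\delta}<c_{s,p}$. In the first case I would take $F=X$, for which the intersection hypothesis is trivial because $\max_{[0,1]}\widetilde I_\lambda(\gamma(\cdot))\ge m_{\varepsilon,\delta}$ for all $\gamma$; Theorem~\ref{2307242024} then produces a $(PS)_{m_{\varepsilon,\delta}}$ sequence, and $(PS)_{m_{\varepsilon,\delta}}$ yields a critical point $w$ with $\widetilde I_\lambda(w)=m_{\varepsilon,\delta}$; since $m_{\varepsilon,\delta}\ne 0=\widetilde I_\lambda(0)$ by hypothesis and $m_{\varepsilon,\delta}>\widetilde I_\lambda(u_\lambda)$, we get $w\notin\{0,u_\lambda\}$, a contradiction. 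In the second case I would instead fix $\rho_0\in(0,\rho)$ with $\rho_0\ne\|u_\lambda\|$ and take $F=\partial B_{\rho_0}(u_\lambda)$, a closed set; by the preliminary observation every $\gamma\in\Gamma_{\varepsilon,\delta}$ meets $F$ at a point where $\widetilde I_\lambda\ge\widetilde I_\lambda(u_\lambda)=m_{\varepsilon,\delta}$, so the intersection hypothesis holds, and Theorem~\ref{2307242024} gives a sequence $\{x_j\}$ with $\mathrm{dist}(x_j,F)\to 0$, $\widetilde I_\lambda(x_j)\to m_{\varepsilon,\delta}$ and $\widetilde I_\lambda'(x_j)\to 0$; by $(PS)_{m_{\varepsilon,\delta}}$, up to a subsequence $x_j\to w$ with $\widetilde I_\lambda'(w)=0$ and $\|w-u_\lambda\|=\rho_0$, whence $w\ne u_\lambda$ and, since $\rho_0\ne\|u_\lambda\|$, also $w\ne 0$ — again a contradiction. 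Either way $\widetilde I_\lambda$ has a critical point outside $\{0,u_\lambda\}$, which by Lemma~\ref{22080821} is a second positive solution of $(P_\lambda)$.

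With the mountain pass path and the strict estimate $m_{\varepsilon,\delta}<c_{s,p}$ already in hand from Lemmas~\ref{2210121045}--\ref{2209212151}, the one genuinely delicate point is purely variational: coping with the possibly \emph{degenerate} local minimizer $u_\lambda$, i.e.\ the borderline case $m_{\varepsilon,\delta}=\widetilde I_\lambda(u_\lambda)$, in which the classical mountain pass theorem gives no information and one must exploit Ghoussoub--Preiss' refinement with the sphere $\partial B_{\rho_0}(u_\lambda)$ as separating set so as to pin the limiting critical point at distance $\rho_0>0$ from $u_\lambda$. The hypothesis $m_{\varepsilon,\delta}\ne 0$ enters precisely to rule out $w=0$ in the non-degenerate case $m_{\varepsilon,\delta}>\widetilde I_\lambda(u_\lambda)$, and it is exactly this point that prevents extending the conclusion to all $\lambda\in(0,\Lambda)$.
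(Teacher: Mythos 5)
Your proof is correct and essentially coincides with the paper's: you assume for contradiction that $0$ and $u_\lambda$ are the only critical points, invoke Proposition~\ref{2209210928} together with Lemmas~\ref{2210121045}--\ref{2209212151} to get a Palais--Smale mountain pass level below $c_{s,p}$, and then split on whether $m_{\varepsilon,\delta}>\widetilde I_\lambda(u_\lambda)$ or $m_{\varepsilon,\delta}=\widetilde I_\lambda(u_\lambda)$, treating the degenerate case with Ghoussoub--Preiss and the separating sphere $\partial B_{\rho_0}(u_\lambda)$, exactly as the paper does. The only cosmetic differences are that the paper invokes the classical Ambrosetti--Rabinowitz theorem in the non-degenerate case (equivalent to your Ghoussoub--Preiss application with $F=X$), and that passing from a nontrivial critical point of $\widetilde I_\lambda$ to a positive solution also needs the comparison principle Lemma~\ref{2307091452} to get nonnegativity before Lemma~\ref{22080821} can be applied.
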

\begin{proof}
We assume by contradiction that there are only two critical points $0$ and $u_\lambda$ of ${\widetilde I}_\lambda$. It follows from Proposition \ref{2209210928} that ${\widetilde I}_\lambda$ satisfies $(PS)_c$ condition for $c<c_{s, p}$. 

Let $\rho$ be given in \eqref{2209211647}. If there exists $0<\rho_0<\rho$ such that $\inf_{u\in\partial B_{\rho_0}(u_\lambda)}{\widetilde I}_\lambda(u)>{\widetilde I}_\lambda(u_\lambda)$, then $m_{\varepsilon,\delta}>{\widetilde I}_\lambda(u_\lambda)$. As Lemma \ref{2210121045} and Lemma \ref{2209212151} showed,  $m_{\varepsilon,\delta}<c_{s, p}$. Using mountain pass theorem in \cite{AR1973},  we obtain a mountain pass critical point $w_\lambda$ of ${\widetilde I}_\lambda$. As $(-\Delta)_p^sw_\lambda=\lambda (w_\lambda^+)^{q-1}+(w_\lambda^+)^{p_s^*-1}$, by Lemmas \ref{2307091452}, $w_\lambda$ is also a nonnegative solution to $(P_\lambda)$. Since $m_{\varepsilon,\delta}\neq 0$ and $m_{\varepsilon,\delta}>{\widetilde I}_\lambda(u_\lambda)$, then $w_\lambda\notin \{0, u_\lambda\}$. So problem $(P_\lambda)$ has at least two positive solutions $u_\lambda$ and $w_\lambda$.
%by {\color{red}the comparison principle and the maximum principle (see Lemmas \ref{2307091452}, \ref{22080821}})\footnote{$w_\lambda$ is nonnegative since $(-\Delta)_p^sw_\lambda=\lambda (w_\lambda^+)^{q-1}+(w_\lambda^+)^{p_s^*-1}$.}.

If $m_{\varepsilon,\delta}={\widetilde I}_{\lambda}(u_\lambda)$, then for any $0<\rho_0<\rho$, we have $\inf_{u\in\partial B_{\rho_0}(u_\lambda)}{\widetilde I}_\lambda(u)={\widetilde I}_\lambda(u_\lambda)$. Applying Theorem \ref{2307242024} with 
\[
 c=m_{\varepsilon, \delta},\;\;X=W_0^{s,p}(\Omega),\;\;F=\partial B_{\rho_0}(u_\lambda),\;\; \varphi(x)=\widetilde{I}_\lambda(x),\;\; u=u_\lambda,\;\; v=e,
\]
we obtain still another critical point $w_\lambda\in \partial B_{\rho_0}(u_\lambda)$ of ${\widetilde I}_\lambda$.
\end{proof}

\begin{remark}
From the above lemma, $(P_\lambda)$ has two positive solutions for $0<\lambda<\Lambda$ whenever $m_{\varepsilon,\delta}\neq 0$. However, we cannot rule out the case that the mountain pass critical point is trivial when $m_{\varepsilon,\delta}=0$. The following lemma can tell us that $m_{\varepsilon,\delta}>0$ if $\lambda$ is sufficiently small, which means that the trivial critical point will not occur.
\end{remark}

\begin{lemma}\label{2210121051}
Let $m_{\varepsilon,\delta}$ be the mountain pass level defined in \eqref{2209150938}. There exists $\lambda^*>0$ such that $m_{\varepsilon,\delta}>0$ for $\lambda\in(0, \lambda^*)$.
\end{lemma}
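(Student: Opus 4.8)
The plan is to produce a small sphere $\partial B_r(0)\subset W_0^{s,p}(\Omega)$ on which $\widetilde I_\lambda$ stays above a fixed positive level $\alpha$ for all $\lambda$ near $0$, and then to check that for such $\lambda$ the endpoints $u_\lambda$ and $e$ of every path in $\Gamma_{\varepsilon,\delta}$ lie strictly inside, respectively outside, that sphere; applying the intermediate value theorem to $t\mapsto\|\gamma(t)\|$ forces each $\gamma\in\Gamma_{\varepsilon,\delta}$ to meet $\partial B_r(0)$, and hence $m_{\varepsilon,\delta}\ge\alpha>0$.

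For the sphere, the Sobolev embeddings $W_0^{s,p}(\Omega)\hookrightarrow L^q(\Omega),\,L^{p_s^*}(\Omega)$ give constants $C_q,C_*>0$ with $\widetilde I_\lambda(v)\ge\frac1p\|v\|^p-\frac{C_*}{p_s^*}\|v\|^{p_s^*}-\frac{\lambda C_q}{q}\|v\|^q$ for all $v$. I would first fix $r\in(0,1)$ so small that $\frac1p r^p-\frac{C_*}{p_s^*}r^{p_s^*}\ge\frac1{2p}r^p$, and then require of $\lambda^*$ that $\frac{C_q\lambda}{q}r^q\le\frac1{4p}r^p$ for $\lambda<\lambda^*$, so that $\widetilde I_\lambda\ge\alpha:=r^p/(4p)>0$ on $\partial B_r(0)$ for every $\lambda\in(0,\lambda^*)$. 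Here $\varepsilon,\delta$ (hence $u_{\varepsilon,\delta}$) are the parameters from Lemma \ref{2209212151}; the only $\lambda$-dependent quantities in its proof are $\widetilde I_\lambda(u_\lambda)$ and $|(-\Delta)^s_pu_\lambda|_\infty=|\lambda u_\lambda^{q-1}+u_\lambda^{p_s^*-1}|_\infty$, both of which tend to $0$ as $\lambda\to0^+$ since $\|u_\lambda\|_{\mathcal C^0_s(\overline\Omega)}\to0$ (Proposition \ref{Minimal_Le}), so the strict inequality $m_{\varepsilon,\delta}<c_{s,p}$ holds with a single pair $(\varepsilon,\delta)$ on some interval $(0,\lambda_1)$; we take $\lambda^*\le\lambda_1$ and keep $(\varepsilon,\delta)$, $u_{\varepsilon,\delta}$ fixed.

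Next I locate the endpoints. From $\|u_\lambda\|_{\mathcal C^0_s(\overline\Omega)}\to0$ we get $|u_\lambda|_\infty\to0$, hence $\|u_\lambda\|^p=\lambda|u_\lambda|_q^q+|u_\lambda|_{p_s^*}^{p_s^*}\to0$; after a further shrinking of $\lambda^*$, $\|u_\lambda\|<r$. A routine cutoff estimate also gives $\|\eta_\delta u_\lambda\|^p\le 2^{p-1}\big(\|u_\lambda\|^p+C|u_\lambda|_\infty^p\delta^{n-sp}\big)\to0$. For the terminal point, any positive $t_0$ with $\widetilde I_\lambda(e)<\widetilde I_\lambda(u_\lambda)$ is admissible in \eqref{23010700}, so I would simply fix $t_0=T_0$, a large constant: since $\eta_\delta u_\lambda$ and $u_{\varepsilon,\delta}$ have disjoint supports, $|e^+|_q^q=|\eta_\delta u_\lambda|_q^q+T_0^q|u_{\varepsilon,\delta}|_q^q$, $|e^+|_{p_s^*}^{p_s^*}=|\eta_\delta u_\lambda|_{p_s^*}^{p_s^*}+T_0^{p_s^*}|u_{\varepsilon,\delta}|_{p_s^*}^{p_s^*}$, and $\|e\|^p\le 2^{p-1}\big(\|\eta_\delta u_\lambda\|^p+T_0^p\|u_{\varepsilon,\delta}\|^p\big)$, whence $\widetilde I_\lambda(e)\le o(1)+\frac{2^{p-1}T_0^p}{p}\|u_{\varepsilon,\delta}\|^p-\frac{T_0^{p_s^*}}{p_s^*}|u_{\varepsilon,\delta}|_{p_s^*}^{p_s^*}$, which is $\le-1$ for $T_0$ large and $\lambda$ small (recall $p_s^*>p$). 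As $\widetilde I_\lambda(u_\lambda)=(\frac1p-\frac1{p_s^*})\|u_\lambda\|^p-(\frac1q-\frac1{p_s^*})\lambda|u_\lambda|_q^q\to0^-$, this gives $\widetilde I_\lambda(e)<\widetilde I_\lambda(u_\lambda)$. Taking $T_0$ also so large that $T_0\|u_{\varepsilon,\delta}\|>2r$ and using $\|e\|\ge T_0\|u_{\varepsilon,\delta}\|-\|\eta_\delta u_\lambda\|$, we obtain $\|e\|>r$ for $\lambda$ small.

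Finally, for $\lambda\in(0,\lambda^*)$ every $\gamma\in\Gamma_{\varepsilon,\delta}$ is a continuous curve in $W_0^{s,p}(\Omega)$ with $\|\gamma(0)\|=\|u_\lambda\|<r<\|e\|=\|\gamma(1)\|$, so $t\mapsto\|\gamma(t)\|$ takes the value $r$ and $\gamma([0,1])\cap\partial B_r(0)\neq\varnothing$; therefore $\max_{t\in[0,1]}\widetilde I_\lambda(\gamma(t))\ge\alpha$, and taking the infimum over $\gamma\in\Gamma_{\varepsilon,\delta}$ yields $m_{\varepsilon,\delta}\ge\alpha>0$. I expect the only delicate point to be the uniformity asserted in the second paragraph: one must confirm that $(\varepsilon,\delta)$, hence $u_{\varepsilon,\delta}$ and the admissible values of $t_0$, may be chosen independently of $\lambda$ near $0$, which reduces to checking that the estimates in the proof of Lemma \ref{2209212151} only improve as $\lambda\to0^+$.
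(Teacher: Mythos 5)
Your proof is correct and follows the same overall structure as the paper's: produce a sphere $\partial B_r(0)$ on which $\widetilde I_\lambda\ge\alpha>0$ uniformly for small $\lambda$ (Sobolev embedding), show the path endpoints $u_\lambda$ and $e$ lie strictly inside and outside that sphere respectively, and invoke the intermediate value theorem for $t\mapsto\|\gamma(t)\|$. The one place where you argue differently is the estimate $\|u_\lambda\|<r$. You deduce it from Proposition \ref{Minimal_Le}: since $\|u_\lambda\|_{\mathcal C^0_s(\overline\Omega)}\to 0$ as $\lambda\to 0^+$, we have $|u_\lambda|_\infty\to 0$, and the energy identity $\|u_\lambda\|^p=\lambda|u_\lambda|_q^q+|u_\lambda|_{p_s^*}^{p_s^*}$ forces $\|u_\lambda\|\to 0$; this requires shrinking $\lambda^*$. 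The paper instead exploits the structure of the fiber map $g_\lambda(t)=\widetilde I_\lambda(tu_\lambda)$: because $t=1$ is a local minimum of $g_\lambda$ (by Lemma \ref{221229} and Lemma \ref{2208251418}), and $g_\lambda'(t)=t^{q-1}f_\lambda(t)$ with $f_\lambda$ unimodal, $t=1$ must be the smaller positive zero of $f_\lambda$, so $g_\lambda$ is decreasing on $(0,1)$; hence $g_\lambda(t)<0$ for $t\in(0,1]$, which is incompatible with $\|u_\lambda\|\ge\rho_0$. The paper's argument is thus purely structural and does not need a further shrinking of $\lambda^*$; yours is more elementary but less sharp in this respect. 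Your last paragraph worrying about the uniformity of $(\varepsilon,\delta)$ across $\lambda$ is not needed for the statement of this lemma (it only asserts $m_{\varepsilon,\delta}>0$, and both proofs show this for any admissible $\varepsilon,\delta$ once $t_0$ is taken large); that uniformity concern is relevant only when one combines this lemma with Lemma \ref{2209212151} to conclude Theorem \ref{2209252009}.
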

\begin{proof}
By the Sobolev embedding, for any $r\in [1, p_s^*]$, there is $C>0$ such that for any $u\in W_0^{s,p}(\Omega)$,
\[
{\widetilde I}_{\lambda}(u)\ge \frac{1}{p}\|u\|^p-\frac{C\lambda}{q}\|u\|^q-\frac{C}{p_s^*}\|u\|^{p_s^*}, \quad \forall\; u\in W_0^{s,p}(\Omega).
\]
Therefore, there exists $\lambda^*>0$ such that if $\lambda\in (0, \lambda^*)$, there are $\rho_0>0$ and $c>0$ satisfying
\begin{equation}\label{2210062140}
{\widetilde I}_\lambda(u)\ge c, \quad \mbox{for all $\|u\|=\rho_0$}.
\end{equation}
We claim that if $\lambda\in (0, \lambda^*)$, then $\|u_\lambda\|<\rho_0$. Indeed, let $g_\lambda(t)={\widetilde I}_{\lambda}(tu_\lambda)$, then
\begin{equation*}
g'_\lambda(t)=t^{p-1} \|u_\lambda\|_p^p-\lambda t^{q-1}|u_\lambda|_q^q-t^{p_s^*-1}|u_\lambda|_{p_s^*}^{p_s^*}=t^{q-1}f_\lambda(t)
\end{equation*}
where
$$f_\lambda(t):=t^{p-q}\|u_\lambda\|_p^p-{\lambda}|u_\lambda|_q^q-t^{p_s^*-q}|u_\lambda|_{p_s^*}^{p_s^*}.$$
Clearly, $f_\lambda$ has a unique maximal point $t_{\max} > 0$ such that
%$$t_{\max}=\left(\frac{(p-q)}{(p_s^*-q)}\frac{\|u_\lambda\|_q^q}{|u_\lambda|_{p_s^*}^{p_s^*}}\right)^{\frac{1}{p_s^*-p}},$$
$f_\lambda$ is increasing on the interval $[0, t_{\max}]$ and decreasing on $[t_{\max}, \infty)$. Moreover, we can conclude $f_\lambda(t_{\max})>0$, otherwise $g_{\lambda}$ is nonincreasing, hence $g_{\lambda}(t)\le 0$ for all $t\ge 0$, which contradicts \eqref{2210062140}. Therefore, there are only two positive critical points $t_1$, $t_2$ of $g_\lambda$ satisfying $0<t_1<t_{\max}<t_2<\infty$, and $g_\lambda$ is decreasing on the intervals $(0, t_1)$ and $(t_2, \infty)$, and increasing on the interval $(t_1, t_2)$. Since $u_\lambda$ is a local minimizer of ${\widetilde I}_\lambda$, so $t_1=1$. This implies $g_\lambda(t)<0$ for $t\in (0,1]$ as ${\widetilde I}_\lambda(u_\lambda) < 0$. Hence $\|u_\lambda\|<\rho_0$. If we set $t_0$ in \eqref{23010700} large such that $\|e\|>\rho_0$, then for any $\gamma\in \Gamma_{\varepsilon,\delta}$, we have $\gamma([0, 1])\cap \partial B_{\rho_0}(0)\neq \varnothing$. As a consequence, $m_{\varepsilon,\delta}\ge c>0$.
\end{proof}
%\begin{proof}[Proof of Theorem \ref{2209252009}]
Thanks to Lemmas \ref{2210121050}, \ref{2210121051}, we complete the proof of Theorem \ref{2209252009}.
%\end{proof}

\section{Proof of Theorem \ref{2209252055}}\label{2306231256}
In this section, we will prove the existence of infinitely many solutions of $(P_\lambda)$ for $\lambda > 0$ small. Consider
\[
\mathcal{F}=\{A\subset W_0^{s,p}(\Omega)\backslash\{0\}: u\in A\Rightarrow -u\in A\},
\]
and
\[
\mathcal{A}_{j,\, r}=\{A\in \mathcal{F} : A ~\text{is}~\text{compact}, A\subset B_r(0),\, {\rm ind}(A)\ge j\}.
\]
Here ${\rm ind}(A)$ denotes the $\mathbb{Z}_2$-genus of $A$, namely, the least integer $k$ such that there exists odd functional $\phi\in C(W_0^{s,p}(\Omega), \mathbb{R}^k)$ satisfying $\phi(u)\neq 0$ for all $u\in A$. By \cite{AR1973}, the $\mathbb{Z}_2$-genus possesses the following properties:
\begin{itemize}
\item[(i)] Definiteness: ${\rm ind}(A)=0$ if and only if $A=\varnothing$;
\item[(ii)] Monotonicity: If there is an odd continuous map from $A$ to $B$ (in particular, if $A\subset B$) for $A, B\in \mathcal{F}$,  then ${\rm ind}(A)\le {\rm ind}(B)$;
\item[(iii)] Subadditivity: If $A$ and $B$ are closed set in $\mathcal{F}$, then ${\rm ind}(A\cup B)\le {\rm ind}(A)+{\rm ind}(B)$.
\end{itemize}\par
Define
\begin{equation}\label{23051421}
b_{j}:=\underset{A\in \mathcal{A}_{j, r}}{\inf}\, \underset{u\in A}{\max}\,I_\lambda(u).
\end{equation}
We are going to prove that $b_j$ is finite, a critical value of $I_\lambda$, and $b_j\to 0^-$ as $j\to \infty$.
\begin{lemma}\label{2209202147}
There exists $\lambda^{**}>0$ such that for all $\lambda\in(0, \lambda^{**}]$ there are $r, a>0$ such that
\begin{itemize}
\item[(i)] $I_\lambda(u)\ge a$ for all $\|u\|=r$;
\item[(ii)] $I_\lambda$ is bounded from below in $B_r(0) \subset W_0^{s, p}(\Omega)$;
\item[(iii)] $I_\lambda$ satisfies (PS) condition in $B_r(0)$.
\end{itemize}
\end{lemma}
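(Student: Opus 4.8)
The plan is to bound $I_\lambda$ from below by a function of $\|u\|$ alone via the continuous embeddings $W_0^{s,p}(\Omega)\hookrightarrow L^q(\Omega)$ and $W_0^{s,p}(\Omega)\hookrightarrow L^{p_s^*}(\Omega)$: there is $C>0$ with $|u|_q^q\le C\|u\|^q$ and, by the definition of $S_{s,p}$, $|u|_{p_s^*}^{p_s^*}\le S_{s,p}^{-p_s^*/p}\|u\|^{p_s^*}$, whence
\[
I_\lambda(u)\ge \frac1p\|u\|^p-\frac{C\lambda}{q}\|u\|^q-\frac{1}{p_s^*}S_{s,p}^{-p_s^*/p}\|u\|^{p_s^*}.
\]
Since $1<q<p<p_s^*$, I would first fix $r>0$ small, independently of $\lambda$, so that both
\[
\frac1p t^p-\frac{1}{p_s^*}S_{s,p}^{-p_s^*/p}t^{p_s^*}\ge\frac{1}{2p}t^p\quad(0\le t\le r)\qquad\text{and}\qquad r^p<S_{s,p}^{n/sp};
\]
the first inequality drives (i)--(ii), while the strict bound $r^p<S_{s,p}^{n/sp}$ is precisely what makes the Palais--Smale analysis work. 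With $r$ so fixed, for $\|u\|=r$ we get $I_\lambda(u)\ge\frac{1}{2p}r^p-\frac{C\lambda}{q}r^q$, which is $\ge a:=\frac{1}{4p}r^p>0$ provided $\lambda\le\lambda^{**}:=\frac{q\,r^{p-q}}{4pC}$; this gives (i). For (ii), on $B_r(0)$ the leading term $\frac1p\|u\|^p$ is nonnegative and the remaining two terms are bounded below by $-\frac{C\lambda}{q}r^q-\frac{1}{p_s^*}S_{s,p}^{-p_s^*/p}r^{p_s^*}$, so $I_\lambda$ is bounded from below there.

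For (iii), take $\{u_j\}\subset B_r(0)$ with $I_\lambda(u_j)$ bounded and $I_\lambda'(u_j)\to0$ in $W_0^{s,p}(\Omega)^*$. Being bounded, along a subsequence $u_j\rightharpoonup u$ in $W_0^{s,p}(\Omega)$, $u_j\to u$ in $L^q(\Omega)$ and a.e. in $\mathbb{R}^n$. Arguing as in Steps 2--3 of the proof of Proposition \ref{2209210928} — the fractional term passes to the limit since $\varPhi_j:=|x-y|^{-(n+sp)/p'}J_{u_j}$ (with $p'=p/(p-1)$) is bounded in $L^{p'}(\mathbb{R}^{2n})$ and converges a.e., while $|u_j|^{q-2}u_j\to|u|^{q-2}u$ in $L^{q/(q-1)}(\Omega)$ and $|u_j|^{p_s^*-2}u_j\rightharpoonup|u|^{p_s^*-2}u$ in $L^{p_s^*/(p_s^*-1)}(\Omega)$ — one finds that $u$ is a critical point of $I_\lambda$, so $\|u\|^p=\lambda|u|_q^q+|u|_{p_s^*}^{p_s^*}$. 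Putting $\widehat u_j:=u_j-u$, the Brezis--Lieb lemma yields $\|\widehat u_j\|^p=\|u_j\|^p-\|u\|^p+o(1)$ and $|\widehat u_j|_{p_s^*}^{p_s^*}=|u_j|_{p_s^*}^{p_s^*}-|u|_{p_s^*}^{p_s^*}+o(1)$, while testing $I_\lambda'(u_j)\to0$ against $u_j$ and using $u_j\to u$ in $L^q$ gives $\|u_j\|^p=\lambda|u|_q^q+|u_j|_{p_s^*}^{p_s^*}+o(1)$. Subtracting the critical-point identity for $u$, I obtain $\|\widehat u_j\|^p=|\widehat u_j|_{p_s^*}^{p_s^*}+o(1)\le S_{s,p}^{-p_s^*/p}\|\widehat u_j\|^{p_s^*}+o(1)$. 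Passing to a further subsequence with $\|\widehat u_j\|^p\to\ell$, this forces $\ell=0$ or $\ell\ge S_{s,p}^{n/sp}$; but $\ell\le\limsup_j\|u_j\|^p\le r^p<S_{s,p}^{n/sp}$, hence $\ell=0$, i.e.\ $u_j\to u$ strongly in $W_0^{s,p}(\Omega)$, which completes the verification of the Palais--Smale condition.

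The only genuinely delicate point is (iii): because $W_0^{s,p}(\Omega)\hookrightarrow L^{p_s^*}(\Omega)$ is not compact, a Palais--Smale sequence could a priori lose mass through concentration, and the entire remedy here is the a priori confinement $\|u_j\|\le r$ with $r$ taken below the threshold $S_{s,p}^{n/sp}$ (note $p_s^*/(p_s^*-p)=n/(sp)$). This is why $r$ must satisfy $r^p<S_{s,p}^{n/sp}$ in addition to the elementary smallness used for (i)--(ii); everything else is the standard interplay of weak convergence, the Brezis--Lieb splitting and the Sobolev inequality, essentially the computation already performed for ${\widetilde I}_\lambda$ in Proposition \ref{2209210928}.
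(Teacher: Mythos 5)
Your proof is correct and takes essentially the same route as the paper's: Sobolev embedding for the lower bound in (i)--(ii), and for (iii) the standard Brezis--Lieb splitting plus the confinement $\|u_j\|\le r$ with $r$ below the Sobolev threshold to rule out concentration. The only (harmless) technical difference is that you bound $\lim\|\widehat u_j\|^p\le r^p$ via the Brezis--Lieb identity, whereas the paper uses the cruder triangle inequality $\|u_j-u\|\le 2r$ and accordingly demands $2r<S_{s,p}^{n/(sp^2)}$; both choices suffice.
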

\begin{proof}
Assume that $\{u_j\}\subset B_r(0)$ is a (PS) sequence, that is, $\{I_\lambda(u_j)\}$ is bounded in $\mathbb{R}$ and $I'_\lambda(u_j)\to 0$ in $W_0^{s,p}(\Omega)^*$. Since $\{u_j\}$ is bounded in $W_0^{s,p}(\Omega)$, there is a subsequence, denoted still by $\{u_j\}$, which converges weakly to some $u\in \overline B_r(0)$ and $ u_j(x)\rightarrow u(x)$ a.e. in $\mathbb{R}^n$. Arguing as Proposition \ref{2209210928}, we have
\[
\|u_j-u\|^p= | u_j-u|_{p_s^*}^{p_s^*}+o(1)\le \frac{\lVert u_j-u \lVert^{p_s^*}}{S_{s,p}^{p_s^*/p}}+o(1),
\]
which implies that
\[
\text{either}\quad S_{s,p}^{\frac{n}{sp^2}}\le \underset{j\to \infty}{\lim\inf}\lVert u_j-u\lVert\quad\text{or}\quad \underset{j\to\infty}{\lim}\|u_j-u\|=0.
\]
Therefore if we fix $0 < 2r<S_{s,p}^{\frac{n}{sp^2}}$, then $u_j$ must converge to $u$ in $W_0^{s,p}(\Omega)$.

By the Sobolev embedding $W_0^{s,p}(\Omega)\subset L^m(\Omega)$ for $m\in [1, p_s^*]$, there exists $C>0$ such that for any $u\in W_0^{s,p}(\Omega)$,
\[
I_{\lambda}(u)\ge \frac{1}{p}\|u\|^p-\frac{C\lambda}{q}\|u\|^q-\frac{C}{p_s^*}\|u\|^{p_s^*},
\]
which concludes that there exists $\lambda^{**}>0$ such that if $\lambda\in (0, \lambda^{**}]$, there are $\frac12 S_{s,p}^{\frac{n}{sp^2}}>r>0$ and $a>0$ such that $I_\lambda(u)\ge a$
for $\|u\|=r$. Obviously, $I_\lambda$ is bounded from below in $B_r(0)$.
%and
%\[
%\begin{aligned}
%I_{\lambda}(u_i)=&I_{\lambda}(u)+\frac1p\|u_i-u\|^p-\frac{1}{p_s^*}|u_i-u|_{p_s^*}^{p_s^*}+o(1)\\
%=&I_{\lambda}(u)+\frac{s}{n}\|u_i-u\|^p+o(1).\\
%\end{aligned}
%\]
\end{proof}

By Lemma \ref{2209202147}, the deformation lemma will hold for $I_\lambda$ restricted in $B_r(0)$ for some $r>0$. If $b_j\in (-\infty, 0)$ is not a critical value of $I_\lambda$, it follows from \cite[Lemma 3.1]{Willem96} that there exist $\varepsilon\in (0, -b_j)$ and a homotopy mapping, odd in $u$
$$\eta : [0, 1]\times I_\lambda^{b_j+\varepsilon}\to  I_\lambda^{b_j+\varepsilon}$$
such that $\eta(0, \cdot)$ is the identity map of $I_\lambda^{b_j+\varepsilon}$ and $\eta(1, I_\lambda^{b_j+\varepsilon})\subset I_\lambda^{b_j-\varepsilon}$. Here  for any $c\in \mathbb{R}$, $I_\lambda^{c}$ means the sublevel set $\{u\in B_r(0): I_\lambda(u)\le c\}$. According to the definition of $b_j$, there exists $A\in \mathcal{A}_{j, r}$ such that $A\subset I_\lambda^{b_j+\varepsilon}$. However, by means of the monotonicity of $\mathbb{Z}_2$-genus, we have
$${\rm ind}(\eta(1, A))\ge {\rm ind}(A)\ge j.$$ However, $\eta(1, A)\subset I_\lambda^{b_j-\varepsilon}$, which contradicts the definition of $b_j$. Hence, the hypothesis was wrong, $b_j$ is a critical value of $I_\lambda$.

Next, we aim to prove $b_j\to 0$ as $j\to \infty$. For that, we will consider suitable subsets of ${\mathcal A}_{j, r}$. 
Since $W_0^{s,p}(\Omega)$ is reflexive and separable, by \cite[Corollary 3.27]{Brezis2010}, $W_0^{s,p}(\Omega)^*$ is also reflexive and separable. Therefore, there exists a sequence $\{f_j\} \subset W_0^{s,p}(\Omega)^*$ such that 
$\{f_j\}$ are linearly independent and ${\rm span}\{f_j, j \ge 1\}$ is dense in $W_0^{s,p}(\Omega)^*$. Denote $F_j = {\rm span}\{f_k, 1\le k \le j\}$ for any $j \ge 1$. 

Let $g_1 = f_1$, let $v_1\in W_0^{s,p}(\Omega)$ satisfy $g_1(v_1) =1$. Clearly, there is $g_2\in F_2\backslash F_1$ such that $g_2(v_1)=0$, and there exists $v_2\in W_0^{s,p}(\Omega)$ satisfying $g_2(v_2)=1$. By induction, we get two sequences $g_j\in F_j\backslash F_{j-1}$, $v_j\in W_0^{s,p}(\Omega)$ such that
\[
\forall\; j \ge 2, \quad g_j(v_k)=0 \;\;\mbox{for }1\le k\le j-1\;\; \mbox{and}\;\; g_j(v_j)=1.
\]
Define 
\[
E_j = {\rm span}\{v_k, 1\le k \le j\},\;\;E_j^{\perp}=\underset{1 \le k\le j}{\cap} {\rm Ker}(g_k) = \underset{f \in F_j}{\cap} {\rm Ker}(f), \quad \forall\; j \ge 1.
\]
Clearly, $\{v_j\}$ are linearly independent, hence ${\rm dim}(E_j) = j$ for all $j \ge 1$. It's not difficult to see that for any $j \ge 1$, there holds $W_0^{s,p}(\Omega) = E_j \oplus E_j^{\perp}$. In other words, for any $w \in W_0^{s,p}(\Omega)$, there is a unique $w_j \in E_j$ such that $g_k(w_j) = g_k(w)$ for all $1 \le k \le j$, so $w_j^\perp = w - w_j \in E_j^{\perp}$. We denote the projection from $W_0^{s,p}(\Omega)$ into $E_j$ parallel to $E_j^\perp$ by $P_j$, i.e. $P_j(w) = w_j$. An easy but very useful observation is

\begin{lemma}\label{2306292329}
Let $\{u_j\} \subset W_0^{s,p}$ be a bounded sequence such that $u_j\in E_j^{\perp}$ for any $j$, then $u_j\to 0$ weakly in $W_0^{s,p}(\Omega)$.
\end{lemma}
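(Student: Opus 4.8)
The plan is to combine the classical subsequence principle for weak convergence in reflexive spaces with the density of $\bigcup_{m\ge 1}F_m$ in $W_0^{s,p}(\Omega)^*$ that was built into the construction of the $f_j$. Since $W_0^{s,p}(\Omega)$ is reflexive and $\{u_j\}$ is bounded, it suffices to prove that every weakly convergent subsequence of $\{u_j\}$ has weak limit $0$: if $u_j\not\rightharpoonup 0$, there would exist $g\in W_0^{s,p}(\Omega)^*$, $\e_0>0$ and a subsequence with $|\langle g,u_{j_k}\rangle|\ge\e_0$, and by boundedness and reflexivity a further subsequence would converge weakly to some $u$ with $\langle g,u\rangle\neq 0$, contradicting the claim that all such limits vanish.

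So let $\{u_{j_k}\}$ be a subsequence with $u_{j_k}\rightharpoonup u$ in $W_0^{s,p}(\Omega)$, and fix an arbitrary $f$ in the linear span $\bigcup_{m\ge 1}F_m$. Then $f\in F_m$ for some $m$, and since the chain $\{F_m\}$ is increasing (equivalently, $\mathrm{span}\{g_1,\dots,g_j\}=F_j$ by the triangular construction of the $g_k$), we have $f\in F_{j_k}$ for every $k$ with $j_k\ge m$. By the definition $E_j^{\perp}=\bigcap_{f\in F_j}\mathrm{Ker}(f)$ together with the hypothesis $u_{j_k}\in E_{j_k}^{\perp}$, this gives $\langle f,u_{j_k}\rangle=0$ for all such $k$, hence $\langle f,u\rangle=\lim_k\langle f,u_{j_k}\rangle=0$.

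It remains to upgrade this to all of the dual. Since $\mathrm{span}\{f_j:j\ge 1\}=\bigcup_{m\ge 1}F_m$ is dense in $W_0^{s,p}(\Omega)^*$, given any $g\in W_0^{s,p}(\Omega)^*$ we pick $f^{(n)}\to g$ in the dual norm with $f^{(n)}\in\bigcup_{m}F_m$; then $\langle g,u\rangle=\lim_n\langle f^{(n)},u\rangle=0$. Thus $\langle g,u\rangle=0$ for every $g\in W_0^{s,p}(\Omega)^*$, so $u=0$ by Hahn--Banach. This shows every weak limit along a subsequence is $0$, and therefore $u_j\rightharpoonup 0$ in $W_0^{s,p}(\Omega)$.

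I do not expect a genuine obstacle here; the only points needing a little care are the reduction to subsequences (the standard fact that a bounded sequence in a reflexive space converges weakly precisely when all its weakly convergent subsequences share the same limit) and bookkeeping with the nested structure $F_m\subset F_{m+1}$, which is exactly what guarantees that for each fixed functional $f$ one eventually has $\langle f,u_j\rangle=0$ for all large $j$. The density of $\bigcup_m F_m$ in the dual, already recorded when the $f_j$ were selected, then closes the argument.
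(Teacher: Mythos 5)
Your argument is correct, and the core observation is the same as the paper's: for each $f$ in the dense set $\bigcup_m F_m$, the quantity $\langle f,u_j\rangle$ vanishes for all large $j$ (because $F_m\subset F_j$ once $j\ge m$, so $u_j\in E_j^\perp$ forces $f(u_j)=0$), and density then handles general $f$.

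The route you take to close the density step is slightly longer than the paper's, and it uses reflexivity where the paper does not. You extract a weakly convergent subsequence (this is where reflexivity enters), identify its limit $u$, show $\langle f,u\rangle=0$ for $f$ in the dense set, pass to the whole dual by density, conclude $u=0$, and finish with the subsequence principle. The paper instead runs the density argument directly on the sequence $\{u_j\}$: for general $g\in W_0^{s,p}(\Omega)^*$ and $\e>0$, pick $f\in\bigcup_m F_m$ with $\|g-f\|_{*}<\e/(2\sup_j\|u_j\|)$, so $|\langle g,u_j\rangle|\le\|g-f\|_{*}\|u_j\|+|\langle f,u_j\rangle|<\e$ once $j$ is large. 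This avoids extracting any subsequence and uses only boundedness of $\{u_j\}$ and density of $\bigcup_m F_m$; reflexivity of $W_0^{s,p}(\Omega)$ is not needed for this particular lemma (it is used elsewhere, to build the separable dual). Both arguments are sound, so this is just a matter of economy: the paper's direct $3\e$-style argument buys a shorter proof with weaker hypotheses, while your subsequence argument packages the same mechanism inside the standard characterization of weak convergence in reflexive spaces.
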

\begin{proof}
Let $f\in \cup_j F_j$. From the definition of $E_j^{\perp}$, $f(u_j) = 0$ for $j$ large enough, so $\lim_{j\to \infty} f(u_j) = 0$. The same conclusion holds for any $f \in W_0^{s,p}(\Omega)^*$, since $\cup_j F_j$ is dense in $W_0^{s,p}(\Omega)^*$ and $\{u_j\}$ is bounded.
\end{proof}

%\begin{lemma}\label{2307042108}
%For any $j\in \mathbb{N}^+$ and $q\ge 1$, there exists a compact set $A_j\in \mathcal{F}\cap E_{j}$ such that $i(A_j)=j$ and
%\begin{equation}\label{2209202131}\|u\|\le C|u|_q\quad \text{for}~u\in A_j,\end{equation}
%where $C>0$ is independent of $u$.
%\end{lemma}
%\begin{proof}Since $E_j$ is a finite dimensional space, the norms $\|\cdot\|$ and $|\cdot|_q$ are equivalent. Therefore there exists $C$ such that
%\[ \|u\|\le C|u|_q\quad \text{for~any}~u\in E_{j}.\]
%Let $A_j$ be the unit sphere of $E_{j}$. Then $A_j\in \mathcal{F}$ is compact, and $i(A_j)=j$.
%\end{proof}
%\begin{lemma}
%There exists $\lambda^*>0$ such that for all $\lambda\in(0, \lambda^*)$, problem \eqref{eq1.1} has infinitely many solutions.
%\end{lemma}
\begin{proof}[Proof of Theorem \ref{2209252055}]
Let $\lambda^{**},\,r$ be as in Lemma \ref{2209202147} and $\lambda\in (0, \lambda^{**})$. Let $A_j = E_j\cap \partial B_1(0)$, clearly $A_j \in {\mathcal A}_{j, r}$ since ${\rm ind}(A_j) = j$ and $A_j$ is compact. Moreover, as ${\rm dim}(E_j) < \infty$ there is $C_j > 0$ such that for any $v \in E_j$, $\|u\|\le C_j|u|_q$. Consequently, for any $u\in A_j$ and any $t>0$,
\[
\begin{aligned}
I_\lambda(tu)\le \frac{t^p}{p}\|u\|^p-\frac{\lambda t^q}{q}|u|_q^q\le \frac{t^p}{p}\|u\|^p-\frac{C\lambda t^q}{q}\|u\|^q.
\end{aligned}
\]
There exists then $\varepsilon \in (0, r)$ satisfying 
$$\varepsilon A_j\subset B_r(0)\quad \mbox{and}\quad \underset{u\in \varepsilon A_j}{\max}I_\lambda(u)<0.$$
It means that $b_{j}$ is finite and negative. It follows that there exists a critical point $u_{j}\in B_r(0)$ of $I_{\lambda}$ with $I_\lambda(u_{j})=b_{j}<0$.  Let
\begin{equation}\label{2307072358}
\widetilde{b}_j=\underset{A\in \mathcal{A}_{j, r}}{\inf}\, \underset{u\in A\cap E_{j-1}^{\perp}}{\sup}\,I_\lambda(u).
\end{equation}
Note that $\widetilde{b}_j$ is well defined because $A\cap E_{j-1}^{\perp} \ne \varnothing$ for any $A\in \mathcal{A}_{j, r}$. In fact, if $P_{j-1}u\ne 0$ for all $u\in A$, it follows from the property of $\mathbb{Z}_2$-genus that ${\rm ind}(A)\le {\rm ind}(P_{j-1}(A))\le j-1$, which is a contradiction. Obviously $\widetilde{b}_j\le b_j$. 

Now we claim $b_{j}\to 0^-$ as $j\to \infty$. Suppose the contrary: $b_j\le \alpha < 0$ for all $j\in \NN^+$. By the definition of $\widetilde{b}_j$, there is a sequence $\{u_j\}$ such that
$$u_j\in E_{j-1}^{\perp}\cap B_r(0) \text{~and~} |I_{\lambda}(u_j)-\widetilde{b}_j|<\frac{1}{j}.$$
By Lemma \ref{2306292329}, $u_j\to 0$ weakly in $W_0^{s,p}(\Omega)$, hence $u_j\to 0$ in $L^m(\Omega)$ for all $m\in [1, p_s^*)$. As $\widetilde b_j\le \alpha$, we have 
\begin{equation}\label{2210181116}
\underset{j\to \infty}{\lim\sup}\,I_{\lambda}(u_j)\le \alpha<0.
\end{equation}
On the other hand,
\[
\begin{aligned}
I_{\lambda}(u_j) = \frac1p\|u_j\|^p-\frac{1}{p_s^*}|u_j|_{p_s^*}^{p_s^*}+o(1) &\ge \frac1p\|u_j\|^p-\frac{S_{s,p}^{-p_s^*/p}}{p_s^*}\|u_j\|^{p_s^*}+o(1)\\
&=\|u_j\|^p\left(\frac1p- \frac{S_{s,p}^{-p_s^*/p}}{p_s^*}\|u_j\|^{p_s^*-p}  \right)+o(1).
\end{aligned}
\]
By $\frac12 S_{s,p}^{\frac{n}{sp^2}}>r>0$, there holds
$$\frac1p- \frac{S_{s,p}^{-p_s^*/p}}{p_s^*}\|u_j\|^{p_s^*-p}  \ge \frac1p- \frac{S_{s,p}^{-p_s^*/p}}{p_s^*}r^{p_s^*-p}  \ge 0.$$
Hence $I_{\lambda}(u_j)\ge o(1)$, which contradicts \eqref{2210181116}. This implies $b_j\to 0^-$ and problem $(P_\lambda)$ has infinitely many solutions.
\end{proof}


\begin{thebibliography}{99}

%\bibitem{AL1989} F. Almgren, E. Lieb, Symmetric decreasing rearrangement is sometimes continuous, {\it J. Amer. Math. Soc.} {\bf 2} (1989), 683-773.

%\bibitem{Alves03} C. Alves, Y. Ding, Multiplicity of positive solutions to a $p$-Laplacian equation involving critical nonlinearity, {\it J. Math. Anal. Appl}. {\bf 279} (2003), 508-521.

\bibitem{Ambrosetti94} A. Ambrosetti, H. Brezis and G. Cerami, Combined effects of concave and convex nonlinearities in some elliptic problems, {\it J. Funct. Anal.} {\bf 122} (1994), 519-543.

\bibitem{AM2007} A. Ambrosetti and A. Malchiodi, Nonlinear analysis and semilinear elliptic problems, Cambridge Studies in Advanced Mathematics, Cambridge University Press, Cambridge, 2007.

\bibitem{AR1973} A. Ambrosetti and P.H. Rabinowitz, Dual variational methods in critical point theory and applications, {\it J. Funct. Anal.} {\bf 14} (1973), 349-381.


%\bibitem{AM2009} F. Andreu, J. Maz\'{o}n, J. Rossi, J. Toledo, A nonlocal $p$-Laplacian evolution equation with nonhomogeneous Dirichlet boundary conditions, {\it SIAM J. Math. Anal.} {\bf 40} (2009), 1815-1851.

%\bibitem{Bahri88} A. Bahri, J. Coron, On a nonlinear elliptic equation involving the critical Sobolev exponent: the effect of the topology of the domain, {\it Comm. Pure Appl. Math.} {\bf 41} (1988), 253-294.

\bibitem{BCSS2015} B. Barrios, E. Colorado, R. Servadei and F. Soria, A critical fractional equation with concave-convex power nonlinearities, {\it Ann. Inst. H. Poincar\'e, Anal. Non Lin\'eaire} {\bf 32} (2015), 875-900.

%\bibitem{BCPS2012} B. Barrios, E. Colorado, A. de Pablo, U. Sanchez, On some critical problems for the fractional Laplacian operator, {\it J. Differential Equations} {\bf 252} (2012), 6133-6162.

\bibitem{BW1995} T. Bartsch and M. Willem, On an elliptic equation with concave and convex nonlinearities, {\it Proc. Amer. Math. Soc.} {\bf 123} (1995),  3555-3561.

%\bibitem{Benci90} V. Benci, G. Cerami, Existence of positive solutions of the equation $-\Delta u+a(x)u = |u|^{2^*-2}u$ in $\mathbb{R}^N$, {\it J. Funct. Anal.} {\bf 88} (1990), 90-117.

%\bibitem{Benci91} V. Benci, G. Cerami, The effect of the domain topology on the number of positive solutions of nonlinear elliptic problems, {\it Arch. Rational Mech. Anal.} {\bf 114} (1991), 79-93.

%\bibitem{Benci94} V. Benci, G. Cerami, Multiple positive solutions of some elliptic problems via the Morse theory and the domain topology, {\it Calc. Var. Partial Differential Equations} {\bf 2} (1994), 29-48.

\bibitem{BM2017} M. Bhakta and D. Mukherjee,
Multiplicity results and sign changing solutions of non-local equations with concave-convex nonlinearities,
{\it Differential Integral Equations} {\bf 30} (2017), 387-422.

%\bibitem{Bonder} J. Bonder, N. Saintier, A. Silva, The concentration-compactness principle for fractional order Sobolev spaces in unbounded domains and applications to the generalized fractional Brezis-Nirenberg problem, {\it NoDEA Nonlinear Differential Equations Appl.} {\bf 25} (2018), Paper No. 52, 25 pp.

\bibitem{BF2014} L. Brasco and G. Franzina, Convexity properties of Dirichlet integrals and Picone-type inequalities, {\it Kodai Math. J.} {\bf 37} (2014), 769-799.

%\bibitem{BL2017} L. Brasco, E. Lindgren, Higher Sobolev regularity for the fractional p-Laplace equation in the superquadratic case, {\it Adv. Math.} {\bf 304} (2017), 300-354.

\bibitem{Brasco2016} L. Brasco, S. Mosconi and M. Squassina, Optimal decay of extremals for the fractional Sobolev inequality, {\it Calc. Var. Partial Differential Equations} {\bf 55} (2016), Art. 23, 32 pp.

\bibitem{BP2016} L. Brasco and E. Parini, The second eigenvalue of the fractional $p$-Laplacian, {\it Adv. Calc. Var.} {\bf 9} (2016), 323-355.

%\bibitem{BPS2016} L. Brasco, E. Parini, M. Squassina, Stability of variational eigenvalues for the fractional $p$-Laplacian, {\it Discrete Contin. Dyn. Syst. Ser. A}  {\bf 36}  (2016), 1813-1845.

%\bibitem{Brezis2010} H. Brezis, Functional analysis, Sobolev Spaces and Partial Differential Equations, Springer, New York, 2010.

%\bibitem{Brezis83} H. Brezis, L. Nirenberg, Positive solutions of nonlinear elliptic equations involving critical Sobolev exponents, {\it Comm. Pure Appl. Math.} {\bf 36} (1983), 437-477.

%\bibitem{Ca2012} L. Caffarelli, Non-local diffusions, drifts and games, Nonlinear partial differential equations, Springer, Berlin, (2012), 37-52.


\bibitem{Brezis2010} H. Brezis, Functional analysis, Sobolev Spaces and Partial Differential Equations, Springer, New York, 2010.

\bibitem{BN1993} H. Brezis and L. Nirenberg, $H^1$ versus $C^1$ local minimizers, {\it C. R. Acad. Sci. Paris t.} {\bf 317} (1993), 465-472.


%\bibitem{Cao12} D. Cao, S. Peng, S. Yan, Infinitely many solutions for $p$-Laplacian equation involving critical Sobolev growth, {\it J. Funct. Anal.} {\bf 262} (2012), 2861-2902.%%

%\bibitem{Chang1993} K. C. Chang, Infinite-dimensional Morse theory and multiple solution problems, Progress in Nonlinear Differential Equations and their Applications, Birkh\"{a}user Boston, 1993.

\bibitem{CMS2018} W. Chen, S. Mosconi and M. Squassina,
Nonlocal problems with critical Hardy nonlinearity,
{\it J. Funct. Anal.} {\bf 275} (2018), 3065-3114.

%\bibitem{CD2015} W. Chen, S. Deng, The Nehari manifold for nonlocal elliptic operators involving concave-convex nonlinearities, {\it Z. Angew. Math. Phys.} {\bf 66} (2015), 1387-1400.

%\bibitem{Cotsiolis} A. Cotsiolis, N. Tavoularis, Best constants for Sobolev inequalities for higher order fractional derivatives, {\it J. Math. Anal. Appl.} {\bf 295} (2004), 225-236.

%\bibitem{DKP2014} A. Di Castro, T. Kuusi, G. Palatucci, Nonlocal Harnack inequalities, {\it J. Funct. Anal.} {\bf 267} (2014), 1807-1836.

%\bibitem{DKP2016} A. Di Castro, T. Kuusi, G. Palatucci, Local behavior of fractional p-minimizers, {\it Ann. Inst. H. Poincar\'e C Anal. Non Lin\'eaire} {\bf 33} (2016), 1279-1299.

%\bibitem{PQ2017} L. Del Pezzo, A. Quaas, A Hopf's lemma and a strong minimum principle for the fractional $p$-Laplacian, {\it J. Differential Equations} {\bf 263} (2017), 765-778.

\bibitem{Di12} E. Di Nezza, G. Palatucci and E. Valdinoci, Hitchhiker's guide to the fractional Sobolev spaces, {\it Bull. Sci. Math.} {\bf 136} (2012), 521-573.

\bibitem{DQ2017} L. Del Pezzo and A. Quaas, A Hopf's lemma and a strong minimum principle for the fractional $p$-Laplacian, {\it J. Differential Equations} {\bf 263} (2017), 765-778.

%\bibitem{FR1978} E. R. Fadell, P. H. Rabinowitz, Generalized cohomological index theories for Lie group actions with an application to bifurcation questions for Hamiltonian systems, {\it Invent. Math.} {\bf 45} (1978), 139-174.

%\bibitem{Figueiredo18} G. Figueiredo, G. Bisci, R. Servadei, The effect of the domain topology on the number of solutions of fractional Laplace problems, {\it Calc. Var. Partial Differential Equations} {\bf 57} (2018), Paper No. 103, 24 pp.

%\bibitem{FBS2016} A. Fiscella, G. Bisci, R. Servadei, Bifurcation and multiplicity results for critical nonlocal fractional Laplacian problems, {\it Bull. Sci. Math.} {\bf 140} (2016), 14-35.

%\bibitem{FS2008} R. Frank, R. Seiringer, Non-linear ground state representations and sharp Hardy inequalities, {\it J. Funct. Anal.} {\bf 255} (2008), 3407-3430.

%\bibitem{FP2014} G. Franzina, G. Palatucci, Fractional $p$-eigenvalues, {\it Riv. Mat. Univ. Parma} {\bf 5} (2014), 373-386.

%\bibitem{Garcia87} J. Garc\'{\i}a Azorero, I. Peral Alonso, Existence and non-uniqueness for the $p$-Laplacian: Nonlinear eigenvalues, {\it Comm. Partial Differential Equations} {\bf 12} (1987), 1389-1430.

\bibitem{Garcia91} J. Garc\'{\i}a Azorero and I. Peral Alonso, Multiplicity of solutions for elliptic problems with critical exponent or with a nonsymmetric term, {\it Trans. Amer. Math. Soc.} {\bf 323} (1991), 877-895.

\bibitem{GP} J. Garc\'{\i}a Azorero and I. Peral Alonso, Some results about the existence of a second positive solution in a quasilinear critical problem, {\it Indiana Univ. Math. J.} {\bf 43} (1994), 941-957.
%\bibitem{Giuseppe02}Giuseppe Devillanova, and Sergio Solimini.: Concentration estimates and multiples solutions to elliptic problems at critical growth.Advances in Differential Equations, 7(10) 1257-1280 (2002).

\bibitem{GMP} J. Garc\'{\i}a Azorero, J. Manfredi and I. Peral Alonso, Sobolev versus H\"{o}lder local minimizer and global multiplicity for some quasilinear elliptic equations, {\it Comm. Contemp. Math.} {\bf 2} (2000), 385-404.

%\bibitem{G1993} N. Ghoussoub, Duality and Perturbation Methods in Critical Point Theory, Cambaridge University Press, Cambaridge, 1993.

\bibitem{GP1989} N. Ghoussoub and D. Preiss, A general mountain pass principle for locating and classifying critical points, {\it Ann. Inst. H. Poincar\'{e}, Anal. Non Lin\'{e}aire} {\bf 6} (1989), 321-330.

\bibitem{GS2015} S. Goyal and K. Sreenadh,
Existence of multiple solutions of $p$-fractional Laplace operator with sign-changing weight function,
{\it Adv. Nonlinear Anal.} {\bf 4} (2015), 37-58.

\bibitem{GS2015-1} S. Goyal and K. Sreenadh,
Nehari manifold for non-local elliptic operator with concave-convex nonlinearities and sign-changing weight functions,
{\it Proc. Indian Acad. Sci. Math. Sci.} {\bf 125} (2015), 545-558.

\bibitem{GZ2003} Z. Guo and Z. Zhang, $W^{1,p}$ versus $C^1$ local minimizers and multiplicity results for quasilinear elliptic equations, {\it J. Math. Anal. Appl.} {\bf 286} (2003),  32-50.

\bibitem{Iannizzotto016} A. Iannizzotto, S. Liu, K. Perera and M. Squassina, Existence results for fractional $p$-Laplacian problems via Morse theory, {\it Adv. Calc. Var.} {\bf 9} (2016), 101-125.

\bibitem{IMS2016} A. Iannizzotto, S. Mosconi and M. Squassina, Global H\"{o}lder regularity for the fractional $p$-Laplacian, {\it Rev. Mat. Iberoam.} {\bf 32} (2016), 1353-1392.

%\bibitem{IMS2020} A. Iannizzotto, S. Mosconi, M. Squassina, Fine boundary regularity for the degenerate fractional $p$-Laplacian, {\it J. Funct. Anal.} {\bf 279} (2020), 108659, 54 pp.

\bibitem{IMS2020_1} A. Iannizzotto, S. Mosconi and M. Squassina, Sobolev versus H\"{o}lder minimizers for the degenerate fractional $p$-Laplacian, {\it Nonlinear Anal.} {\bf 191} (2020), 111635, 14 pp.

%\bibitem{IMS2020-1} A. Iannizzotto, S. Mosconi, M. Squassina, Sobolev versus H\"{o}lder minimizers for the degenerate fractional $p$-Laplacian, {\it Nonlinear Anal.} {\bf 191} (2020), 111635, 14 pp.

\bibitem{IS2014} A. Iannizzotto and M. Squassina, Weyl-type laws for fractional $p$-eigenvalue problems, {\it Asymptot. Anal.}  {\bf 88} (2014), 233-245.

%\bibitem{IN2010} H. Ishii, G. Nakamura, A class of integral equations and approximation of $p$-Laplace equations, {\it Calc. Var. Partial Differential Equations} {\bf 37} (2010), 485-522.

\bibitem{Jarohs2018} S. Jarohs, Strong comparison principle for the fractional $p$-Laplacian and applications to starshaped rings, {\it Adv. Nonlinear Stud.} {\bf 18} (2018), 691-704.

%\bibitem{KMS2015} T. Kuusi, G. Mingione, Y. Sire,  Nonlocal equations with measure data, {\it Comm. Math. Phys.} {\bf 337} (2015), 1317-1368.

%\bibitem{L1992} M. Lazzo, Solutions positivesmultiples pour une equation elliptique non lin\'{e}aire avec l'exposant critique de Sobolev, {\it C. R. Acad. Sci. Paris} {\bf 314} (1992), 61-64.
%\bibitem{Lions}Lions Pierre-Louis.: The concentration-compactness principle in the calculus of variations. The limit case, part 1. Revista Matem{\'e}tica Iberoamericana 1(1), 145-201 (1985).%%

\bibitem{LL2014} E. Lindgren and P. Lindqvist, Fractional eigenvalues, {\it Calc. Var. Partial Differential Equations} {\bf 49} (2014), 795-826.

\bibitem{MM2017} J. Mawhin and G. Molica Bisci, A Brezis-Nirenberg type result for a nonlocal fractional operator, {\it J. Lond. Math. Soc.} {\bf 95} (2017), 73-93.

\bibitem{MRS2016} G. Molica Bisci, V.D. Radulescu and R. Servadei, Variational methods for nonlocal fractional problems, Encyclopedia Math. Appl., 162 Cambridge University Press, Cambridge, 2016, xvi+383 pp.

\bibitem{Mosconi} S. Mosconi, K. Perera, M. Squassina and Y. Yang, The Brezis-Nirenberg problem for the fractional $p$-Laplacian, {\it Calc. Var. Partial Differential Equations} {\bf 55} (2016), Art. 105, 25 pp.

%\bibitem{Mosconi16} S. Mosconi, M. Squassina, Nonlocal problems at nearly critical growth, {\it Nonlinear Anal.} {\bf 136} (2016), 84-101.

%\bibitem{Giampiero14}Palatucci Giampiero, and Pisante Adriano.: Improved Sobolev embeddings, profile decomposition, and concentration-compactness for fractional Sobolev spaces. Calculus of Variations and Partial Differential Equations 50(3-4), 799-829 (2014).%%


%\bibitem{Perera2010} K. Perera, R. Agarwal and D. O'Regan, Morse theoretic aspects of $p$-Laplacian type operators, Mathematical Surveys and Monographs, American Mathematical Society, Providence, 2010.


\bibitem{Perera2015} K. Perera, M. Squassina and Y. Yang, Bifurcation and multiplicity results for critical fractional $p$-Laplacian problems, {\it Math. Nachr.} {\bf 289} (2016), 332-342.

%\bibitem{Rey89} O. Rey, A multiplicity result for a variational problem with lack of compactness, {\it Nonlinear Anal.} {\bf 13} (1989), 1241-1249.

%\bibitem{Servadei12}Servadei Raffaella, and Valdinoci Enrico.: Mountain Pass solutions for non-local elliptic operators. Journal of Mathematical Analysis and Applications 389(2), 887-898 (2012).%%


%\bibitem{R2016} X. Ros-Oton, Nonlocal elliptic equations in bounded domains: a survey, {\it Publ. Mat.} {\bf 60} (2016), 3-26.

%\bibitem{RS2014} X. Ros-Oton, J. Serra, The Dirichlet problem for the fractional Laplacian: regularity up to the boundary, {\it J. Math. Pures Appl.} {\bf 101} (2014), 275-302.

\bibitem{RS2014-1} X. Ros-Oton and J. Serra, The Pohozaev identity for the fractional Laplacian. {\it Arch. Ration. Mech. Anal.} {\bf 213} (2014), 587-628.

%\bibitem{S2013} R. Servadei, The Yamabe equation in a non-local setting, {\it Adv. Nonlinear Anal.} {\bf 2} (2013), 235-270.

%\bibitem{S2014} R. Servadei, A critical fractional laplace equation in the resonant case, {\it Topol. Methods Nonlinear Anal.} {\bf 43} (2014), 251-267.

%\bibitem{Servadei2013} R. Servadei, E. Valdinoci, A Brezis-Nirenberg result for non-local critical equations in low dimension, {\it Commun. Pure Appl. Anal.} {\bf 12} (2013), 2445-2464.

%\bibitem{Servadei15} R. Servadei, E. Valdinoci, The Brezis-Nirenberg result for the fractional Laplacian, {\it Trans. Amer. Math. Soc.} {\bf 367} (2015), 67-102.

%\bibitem{MS2017} M. Squassina, Recent progresses in the theory of nonlinear nonlocal problems, {\it Discrete Contin. Dyn. Syst. Ser. S} {\bf 11} (2018), i. 35-06.

%\bibitem{Struwe} M. Struwe, A global compactness result for elliptic boundary value problems involving limiting nonlinearities, {\it Math. Z.} {\bf 187} (1984), 511-517.

\bibitem{Willem96} M. Willem, Minimax Theorems, Birkh\"{a}user, Boston, 1996.

%\bibitem{YK2016} Y. Yang and K. Perera, $N$-Laplacian problems with critical Trudinger-Moser nonlinearities. {\it Ann. Sc. Norm. Super. Pisa Cl. Sci.} {\bf 16} (2016), 1123-1138.

\end{thebibliography}
\end{document}